\documentclass[12pt]{article}
\providecommand{\keywords}[1]{\textbf{Keywords:} #1}
\usepackage{graphicx} 
\usepackage{authblk}
\usepackage{etoolbox}
\usepackage{geometry}
\usepackage{amsthm,amsmath,amssymb}
\usepackage[utf8]{inputenc}
\usepackage{hyperref} 
\usepackage{cleveref}
\usepackage{makecell}
 \usepackage[style=numeric, backend=biber, sorting=none]{biblatex}
\renewbibmacro*{in:}{}
\usepackage{algorithmic}
\usepackage{algorithm}

\usepackage[markup=underlined]{changes}
\definechangesauthor[name={Reviewer 1}, color=orange]{R2}
\usepackage{verbatim}

\crefrangelabelformat{equation}{(#3#1#4--#5#2#6)}
\crefrangeformat{equation}{#3(#1#4)--(#2#6)} 
\usepackage{enumitem}
\newlist{Case}{enumerate}{2}
\setlist[Case, 1]{%
    label           =   {\bfseries Case \arabic*.},
    labelindent=1em ,labelwidth=1.3cm, labelsep*=1em, leftmargin =!
}
\setlist[Case, 2]{%
    label           =   {\bfseries Subcase \arabic{Casei}.\arabic*.},
    labelindent=-1em ,labelwidth=1.3cm, labelsep*=1em, leftmargin =!
}

\usepackage{xcolor}    
\usepackage{subcaption}
\usepackage{xspace}
\definecolor{omittext}{RGB}{255,0,0}  

\usepackage{booktabs}
\usepackage{graphicx}
\usepackage{multirow}
\usepackage{float}
\usepackage{caption}
\usepackage{soul, color, xcolor}
\usepackage{abstract}

\newcommand{\St}{\mathrm{St}}
\def\Hess{\mathrm{Hess}}
\def\grad{\mathrm{grad}}
\def\dist{\mathrm{dist}}
\def\Proj{\mathrm{Proj}}
\def\trace{\mathrm{trace}}
\def\D{\mathrm{D}}
\def\St{\mathrm{St}}
\def\T{\mathrm{T}}
\def\F{\mathrm{F}}
\def\rank{\mathrm{rank}}
\def\P{\mathrm{P}}
\newtheorem{theorem}{Theorem}[section]
\newtheorem{proposition}[theorem]{Proposition}
\newtheorem{Lemma}[theorem]{Lemma}
\newtheorem{Definition}[theorem]{Definition}
\newtheorem{Assumption}[theorem]{Assumption}

\newtheorem{remark}{Remark}

\usepackage{xcolor}
\usepackage{xpatch}
\makeatletter
\ExplSyntaxOn
\cs_new:Npn \bibColoredItems #1#2
  {
    \clist_map_inline:nn {#2} { \cs_new:cpn {bib@colored@##1} {#1} } 
  }
\ExplSyntaxOff

\newcommand\bib@setcolor[1]{%
  \ifcsname bib@colored@#1\endcsname
    \expanded{\noexpand\color{\csname bib@colored@#1\endcsname}}%
  \else
    \normalcolor
  \fi
}

\IfPackageLoadedTF{hyperref}{\@tempswatrue}{\@tempswafalse}
\if@tempswa
  \xpatchcmd\@bibitem {\H@item}{\bib@setcolor{#1}\H@item}{}{\PatchFailed}
  \xpatchcmd\@lbibitem{\H@item}{\bib@setcolor{#2}\H@item}{}{\PatchFailed}
\else
  \xpatchcmd\@bibitem {\item}  {\bib@setcolor{#1}\item}  {}{\PatchFailed}
  \xpatchcmd\@lbibitem{\item}  {\bib@setcolor{#2}\item}  {}{\PatchFailed}
\fi
\makeatother

\definecolor{revisioncolor}{HTML}{3370BD} 

\title{An Inexact Riemannian Gradient Descent Algorithm on the Stiefel Manifold with One Newton-Schulz Iteration}

\author[1]{Yuqiu Su}
\affil[1]{School of Mathematical Sciences, Xiamen University, Xiamen, China}
\author[1,2]{Wen Huang}
\affil[2]{Corresponding author: \url{wen.huang@xmu.edu.cn}}

\begin{document}
 


\maketitle
\begin{abstract}
In this paper, we propose an inexact Riemannian gradient descent algorithm on the Stiefel manifold (IRGS-StieONS) using an adaptive step size, where the ``inexact'' refers to the inexactness of retraction. It is proven that one single Newton-Schulz iteration for the retraction is sufficient for global convergence and local linear convergence. Compared to the landing and augmented Lagrangian-based algorithms, the proposed algorithm is the first infeasible algorithm that permits adaptive step sizes with a practical initial step size and guarantees global convergence and local linear convergence under mild assumptions. Moreover, we show that the local convergence rate depends on the condition number of the Riemannian Hessian, which matches the Riemannian steepest descent algorithm. This result implies that the infeasibility in the proposed algorithm does not influence the local convergence rate. Furthermore, a stochastic gradient version of IRGD‑StieONS is proposed and is shown to achieve the same convergence rate as Riemannian stochastic gradient descent with decreasing step size. 
Numerical experiments demonstrate that both IRGD‑StieONS and its stochastic counterpart exhibit superior performance and robustness.
\end{abstract}
\keywords{Riemannian optimization,  Stiefel manifold, inexact first-order information, infeasible methods, Newton-Schulz iteration}

\section{Introduction}
In this paper, we consider the problem of optimizing $f$ with orthogonality constraints:  
\begin{equation}\label{main problem}
    \min_{X\in \St (p,n)}f(X),
\end{equation}
where $\mathrm{St}(p,n) = \{X\in \mathbb{R}^{n\times p}:X^TX = I_p\}$ is the compact Stiefel manifold 
and $f:\mathbb{R}^{n\times p}\to \mathbb{R}$ is continuously differentiable. Many important applications can be formulated as optimization problems in the form of~\eqref{main problem}, such as the orthogonal procrustes problem~\cite{LH1999procrustes}, principal
component analysis~\cite{HH1933PCA}, and deep learning~\cite{AM2016Unitary, Eugene2017recurrentnetworks}.


Due to the manifold structure of the orthogonality constraint, there have been many Riemannian optimization algorithms proposed for solving the above problems, including the Riemannian gradient descent  algorithm~\cite{Absil2008OAOMM}, the Riemannian nonlinear conjugate gradient algorithm~\cite{HT2013RCG}, the Riemannian trust region algorithm~\cite{ABG2007RTR}, the Riemannian BFGS methods~\cite{HuaAbsGal2018}, and the Riemannian truncated Newton's method~\cite{Huang2025NRCG}. A crucial part of Riemannian optimization methods is the use of retraction, which is a map that takes inputs $X\in\St(p,n)$ and $Z$, a tangent vector at $X$, and outputs $Y\in \St(p,n)$. It defines a way to move on the manifold $\St(p,n).$
Therefore, retraction-based algorithms are referred to as $\textit{feasible}$ methods on $\St(p, n)$. The commonly used retractions on the Stiefel manifold include the exponential map~\cite{RH2022Exponentia}, the polar-based retraction~\cite{Absil2008OAOMM, Rasmus2026polar}, the Cayley retraction~\cite{Wenzaiwen2013orth}, and the QR-based retraction~\cite{Absil2008OAOMM}. However, they all involve linear algebra operations
on matrices like inversion, square root, or exponential, which makes parallelization particularly difficult for optimization problems with orthogonality constraints~\cite{BinGao2019paraOrth}.

To address the aforementioned issues, many $\textit{infeasible}$ methods have been proposed. These methods do not require a retraction; consequently, the iterates are not necessarily confined to the manifold.
Based on the framework of the augmented Lagrangian method, Gao et al.~\cite{BinGao2019paraOrth} proposed a proximal linearized augmented Lagrangian (PLAM) algorithm method and its column-wise normalization version (PCAL) and
established global subsequence convergence, worst-case complexity, and local convergence rate for PLAM under some mild assumptions. This algorithm does not require a retraction and is parallelizable. The global convergence of PLAM requires a sufficiently large penalty parameter and, correspondingly, a small step size. 
Xiao et al.~\cite{xiao2022class, xiao2024solving} proposed two exact penalty functions, PenC and ExPen, to reformulate the
constrained problem~\eqref{main problem} into an unconstrained one by employing different penalty
functions, where the ``exact'' refers to that in a neighborhood of $\St(p, n)$, any first- and second-order stationary points of the penalty functions are also those of Problem~\eqref{main problem}, but the converse does not necessarily hold. PenC~\cite{xiao2022class}  requires high smoothness of the original objective function, and calculating its gradient is usually expensive in practice. ExPen~\cite{xiao2024solving} was built to overcome the above problems. A global convergence of a nonlinear conjugate gradient method for ExPen was established therein.


On the other hand, a prevailing infeasible method at present is the landing algorithm~\cite{Pierre2022Fastlanding, gao2022optimization, ablin2024infeasible, SYB2024Local,sun2024retraction}, which incorporates a relative gradient term and a penalty term to ensure that the trajectory ultimately lands on the manifold during optimization. The landing algorithm primarily involves matrix multiplication in the iterative process, so it can be parallelized. In~\cite{Pierre2022Fastlanding}, Ablin and Peyr{\' e} proposed and analyzed the landing method  in the case of
the orthogonal manifold, and
prove a sub-optimal $K^{-\frac{1}{3}}$ convergence rate with decreasing step-sizes. Gao et al.~\cite{gao2022optimization} extend the landing flow to solve the optimization problem (\ref{main problem}) over the Stiefel manifold and give the landing flow a geometric interpretation. Ablin et al.~\cite{ablin2024infeasible} constructed a new merit function to improve the theoretical result of~\cite{Pierre2022Fastlanding}. They show that
the basic landing method with constant step size achieves $K^{-1}$ convergence rate. Moreover, 
a landing stochastic gradient descent algorithm and a landing SAGA 
algorithm are proposed therein for the case where the objective function \(f\) is the average of \(N\) component functions and provided a convergence analysis consistent with that of the corresponding traditional Riemannian optimization algorithms employing retraction. Sun et al.~\cite{SYB2024Local, sun2024retraction} assume the local Riemannian Polyak-$\text{\L}$ojasiewicz (P$\text{\L}$) condition and incorporated a merit function into the analysis, thereby establishing the local linear convergence of the landing algorithm. They also extended this framework to distributed optimization. In all the above references, the convergence analysis of the landing algorithm is restricted to fixed~\cite{Pierre2022Fastlanding,SYB2024Local,ablin2024infeasible, sun2024retraction} or diminishing~\cite{ablin2024infeasible} step sizes. 


Recently, many variants~\cite{goyens2026riemannian,si2026unified, xiong2026second} of landing algorithms have been proposed. 
Goyens and Feppon~\cite{goyens2026riemannian} proposed a globally convergent landing method with a backtracking line search.
However, their analysis only guarantees global convergence and does not provide a local convergence rate. Beyond orthogonality constraints, Si et al.~\cite{si2026unified} generalized the landing framework to equality-constrained optimization and developed a backtracking-based landing method. The sublinear convergence rate for both feasibility and
stationarity is further obtained.  More recently, Xiong et al.~\cite{xiong2026second} proposed a second-order method landing on the Stiefel manifold, which is proved to enjoy local quadratic (or superlinear for its inexact variant) convergence under unit-step.

Another class of infeasible methods builds upon Riemannian optimization algorithms while employing inexact information. Specifically, these methods approximate computationally expensive quantities—such as the retraction—through iterative procedures. The numerical errors introduced by such approximations place them in the category of infeasible methods. For instance, Li et al.~\cite{Li2020Efficient} proposed an iterative estimation of the Cayley transform that efficiently uses only
matrix multiplications and extended the existing algorithm to propose Cayley stochastic gradient descent (SGD) and Cayley ADAM algorithms on the Stiefel manifold. They also analyzed the convergence of the iterative Cayley transform and the sublinear convergence rate of Cayley SGD. Li et al. \cite{YuchenLi2024IFROA} analyzed an inexact Riemannian gradient descent algorithm in a general framework of tangential Block Majorization-Minimization, where Riemannian gradient and retraction are inexactly computed. But under the assumption that one can choose $v_k\in \T_{x_k}\mathcal{M}$ such that
$$R_{x_k}(v_k) = \widehat{R}_{x_k}(\widehat{v}_k),$$
where $\widehat{R}_{x_k}, \widehat{v}_k$ denote inexact retraction and inexact Riemannian gradient respectively. Therefore, the inexact retraction step can be viewed as an exact retraction $R$ on $v_k$. Recently, Peng et al.~\cite{peng2026ns} proposed a Newton-Schulz-based Riemannian gradient method on the orthogonal group synchronization, in which the exact SVD-based retraction is replaced by an inexact retraction obtained from a few Newton-Schulz iterations. Moreover, they established a high-probability linear convergence guarantee for their Newton-Schulz-based inexact Riemannian gradient method under a suitable noise regime on measurements and a sufficiently accurate inexact retraction.

Although the infeasible algorithms avoid the use of retraction, thereby enhancing efficiency to some extent and enabling parallelisation. However,  the efficiency of~\cite{Li2020Efficient,peng2026ns}  remains hindered by the multiple iterations used for approximating a retraction. Although Newton-Schulz iterations have recently been used to construct an inexact retraction for optimization for orthogonal group synchronization~\cite{peng2026ns}, 
the applicability to general optimization problems on the Stiefel manifold remains largely unexplored.
Motivated by this, we propose an inexact Riemannian gradient descent algorithm with  one Newton-Schulz iteration (IRGD-StieONS) and its stochastic gradient version. The main contributions of this paper are summarized as follows:



\begin{itemize}
    \item An inexact Riemannian gradient algorithm on the Stiefel manifold is proposed, where the ``inexactness'' refers to the inexact computation of the retraction. It is shown that only one-step Newton-Schulz iteration for approximating retraction is sufficient for IRGD-StieONS to guarantee global convergence and local linear convergence.
    \item Compared to the landing method~\cite{Pierre2022Fastlanding, gao2022optimization, ablin2024infeasible, SYB2024Local,sun2024retraction} and its variants~\cite{goyens2026riemannian,si2026unified, xiong2026second} as well as augmented Lagrangian-based algorithms~\cite{BinGao2019paraOrth}, the proposed algorithm IRGD-StieONS allows the use of an adaptive step size to guarantee global convergence AND local linear convergence. Such flexibility enables the use of more practical initial step sizes, e.g., the BB (Barzilai-Borwein) step size, see details in Remark~\ref{stepsize}. 
\item Using one-step Newton-Schulz iteration avoids enforcing the constraint through a penalty term. Therefore, the proposed method avoids introducing additional penalty parameters. Note that the performance of the penalty-based methods, including the landing method and its variants, augmented Lagrangian-based algorithms, and PenC and ExPen, is sensitive to the coefficient of the penalty term, whereas the proposed method avoids this issue.
Specifically, the local linear convergence rate of IRGD-StieONS depends on the condition number of the Riemannian Hessian of the minimizer that the iterates converge to (see Remark~\ref{remark2}), which coincides with the Riemannian steepest descent algorithm. Therefore, the infeasibility in the proposed algorithm does not influence the local convergence rate. However, existing infeasible algorithms~\cite{xiao2022class, xiao2024solving,SYB2024Local,sun2024retraction} exhibit a dependence of the local convergence rate on the penalty parameter. A small penalty parameter may provide insufficient constraint enforcement, resulting in slow reduction of the infeasibility, while an excessively large penalty parameter makes the local convergence factor worse and impairs the local convergence rate. See Remark~\ref{remark3} for more details.
    These theoretical advantages stem from different analysis techniques compared to those in previous work~\cite{SYB2024Local, ablin2024infeasible,sun2024retraction}.
    \item A stochastic gradient version of IRGD-StieONS (IRSGD-StieONS) is also proposed. Moreover, it shows that IRSGD-StieONS with mild assumptions has the same convergence rate as the existing Riemannian stochastic gradient descent with decreasing step size~\cite{bonnabel2013stochastic}.

    \item Numerical experiments are designed to test the IRGD-StieONS and IRSGD-StieONS algorithm, and compare it with other algorithms  to verify the superiority and robustness of the proposed algorithm.
\end{itemize}

This paper is organized as follows. Section~\ref{Sec2} summarizes notations and preliminaries used throughout the paper.
Section~\ref{Sec3} presents the framework of algorithm IRGD-StieONS and the convergence analysis of algorithm IRGD-StieONS. Section~\ref{Sec4} introduces a stochastic gradient version of IRGD-StieONS (IRSGD-StieONS). Numerical experiments are presented in Section~\ref{Sec5} to evaluate the performance of our proposed methods. Finally, we give some conclusions and future directions in Section~\ref{Sec6}.
\section{Notation and preliminaries}\label{Sec2}

Throughout this paper, $\mathbb{R}^{n\times m}$ denotes the space of real $n \times m$ matrices. The transpose of a vector $x\in \mathbb{R}^{n}$ (or a matrix $A\in\mathbb{R}^{n\times m}$) is denoted by $x^T$ (or $A^T$, respectively). For a matrix $A\in \mathbb{R}^{n\times m}$, the Frobenius norm is denoted by $\|A\|_{\F}$. For a square matrix $A$, the symmetric and skew-symmetric parts 
are defined respectively as $\operatorname{sym}(A) = \frac{A + A^T}{2}$ and $\operatorname{skew}(A) = \frac{A - A^T}{2}$. 
The Frobenius inner product of two matrices 
$A$, $B\in\mathbb{R}^{n\times m}$ is defined as $\left\langle A,B \right\rangle_{\F} = \trace (A^T B)$.
For any $a\in(0,+\infty)$, we define $\frac{a}{0} = +\infty$. For two positive sequences $\{a_k\}$, $\{b_k\}$, and $\{c_k\}$, we write $a_k = \Theta(b_k)$ if there exist positive constants $d_1,d_2, K$ such that $d_1 b_k \le a_k \le d_2 b_k, \forall k \ge K,$ and $a_k = O(c_k)$ if there exist positive constants $d_3$, $K^{'}$ such that $a_k\leq d_3c_k, \forall k \ge K^{'}.$

The Stiefel manifold $\mathrm{St}(p,n) = \{X\in \mathbb{R}^{n\times p}:X^TX = I_p\}$ is an embedded submanifold of $\mathbb{R}^{n\times p}$ of dimension $np-\frac{p(p+1)}{2}.$ The Riemannian geometry and notation are consistent with~\cite{boumal2023intromanifolds}. The tangent space $\T_X\St(p,n)$ at $X$ is given by
$\T_X\St(p,n)= \{V\in \mathbb{R}^{n\times p}:X^TV+V^TX = 0 \}.$
The tangent bundle $\T\St(p,n)$ is the disjoint union of all tangent spaces. The Riemannian metric of $\St(p, n)$ is chosen to be the canonical metric~\cite{Alan1998geometry}
\begin{equation} \label{Rmetric}
\langle U, V \rangle_X = \langle U,(I_n-\frac{1}{2} X X^T)V\rangle_{\F}, \; \forall \; X \in \St(p, n), U,V \in \T_X \St(p, n). 
\end{equation} 
The induced norm in $\T_X \St(p, n)$ is given by $\|\cdot\|_X = \sqrt{\langle \cdot, \cdot \rangle_X}$. 
A retraction $R$ is a smooth mapping $R:\T\St(p,n)\to \St(p,n)$, which   satisfies (i) $R_X(0) = X$ for all $X\in\St(p,n)$ and (ii) $\D R_X(0)[V] = V$ for all $V\in \T_X\St(p,n)$. 
A polar-based retraction given in~\cite[Section 7.3]{boumal2023intromanifolds} is given by
\begin{equation}\label{polar}
    R_X(V) = (X+V)(I_p+V^TV)^{-\frac{1}{2}},
\end{equation}
where $(X+V)(I_p+V^TV)^{-\frac{1}{2}}$ is the polar factor of $X + V$. Note that the polar-based retraction satisfies $R_X(V) = \P_{\St(p, n)} (X + V) =\mathop{\arg\min}\limits_{Y\in\St(p,n)}\|Y-(X+V)\|_{\mathrm{F}}$.
An iterative method for computing the polar factor of a matrix $Z$ is the Newton-Schulz iteration, defined by $\Psi(Z) = \frac{1}{2}Z(3I_p-Z^TZ).$ It has been shown in~\cite{bjorck1971iterative} that if $\|I-Z^TZ\|_{\F}<1,$ then $\Psi^{(\infty)}(Z) = \P_{\St(p,n)}(Z),$ where $\Psi^{(\infty)} (Z) = \underbrace{\Psi\circ \Psi\circ \dots\circ  \Psi }_{\infty}(Z).$ Consequently, the Newton-Schulz iteration provides an approach to approximately compute the polar-based retraction.


It has been shown in~\cite{NB2018Global} that there exists a constant $\kappa$ such that the inequality
\begin{equation}\label{kappa}
    \|R_X(V)-X\|_{\F}\leq \kappa  \|V\|_{\F}
\end{equation}
holds for all $X\in\St(p,n)$ and $V\in \T_X\St(p,n).$ Throughout this paper, the retraction $R$ denotes the polar-based retraction.

For a smooth function $f:\St(p,n)\to \mathbb{R}$, the Riemannian gradient $\grad f$ is defined as the unique
tangent vector satisfying 
\begin{equation*}
    \D f(X)[V] = \left\langle X, \grad f(X)\right\rangle_X , \forall \; V\in \T_X\St(p,n),
\end{equation*}
where $\D f(X)[V]$ denotes the directional derivative of $f$ at $X$ in the direction $V$. 
Therefore, the Riemannian gradient with respect to the Riemannian metric~\eqref{Rmetric} is given by
\begin{equation} \label{eq01}
\grad f(X) = \mathrm{skew}(\nabla f(X)X^T)X, \; \forall \; X \in \St(p, n),
\end{equation}
where $\nabla f(X)$ denotes the Euclidean gradient of $f$ at $X.$

We end this section by stating a technical lemma that will be used in Theorem~\ref{IRGDconveroneNS}. 
\begin{Lemma}\cite{khanh2024fundamental}\label{Lemma1}
    Let $\{a_k\},\{b_k\},\{c_k\}$ be sequences of nonnegative numbers satisfying the conditions
    \begin{equation*}\label{Lemma11}
        \begin{aligned}
           & a_{k+1} - a_k\leq b_ka_k+c_k \text{ for sufficient large } k\in \mathbb{N}\\
        & \{b_k\} \text{ is bounded, }\sum_{k=1}^{\infty}b_k = \infty, \sum_{k+1}^{\infty}c_k <\infty \text{ and } \sum_{k=1}^{\infty}b_ka_k^2 < \infty. 
        \end{aligned}
    \end{equation*}
    Then we have $a_k\to 0$ as $k\to \infty.$
\end{Lemma}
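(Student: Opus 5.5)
The plan is a contradiction argument built on upcrossings. I would first show that $\liminf a_k = 0$. Then I would show that the iterates cannot climb from a small level to a larger one infinitely often, because the recursion together with the summability conditions caps how much the sequence can grow over such a stretch.

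\textbf{Step 1: $\liminf_{k\to\infty} a_k = 0$.} Suppose instead that $a_k \ge \epsilon > 0$ for all $k \ge K$. Then $\sum_{k\ge K} b_k a_k^2 \ge \epsilon^2 \sum_{k\ge K} b_k = \infty$. This contradicts $\sum_k b_k a_k^2 < \infty$.

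\textbf{Step 2: setting up the upcrossings.} Let $B := \sup_k b_k < \infty$. Suppose, for contradiction, that $\limsup_k a_k > (2+B)\epsilon$ for some $\epsilon>0$. By Step 1 the sequence falls below $\epsilon$ infinitely often, and by assumption it exceeds $(2+B)\epsilon$ infinitely often. So, beyond any index $K_0$ past which the recursion holds, there are infinitely many disjoint pairs $m_j < n_j$ with $m_j \to \infty$ such that:
\begin{itemize}
\item $a_{m_j} < \epsilon$;
\item $a_{n_j} > (2+B)\epsilon$;
\item $a_k \ge \epsilon$ for all $m_j < k \le n_j$.
\end{itemize}
Concretely, take $n_j$ to be an index where the sequence exceeds $(2+B)\epsilon$, and $m_j$ the last index before $n_j$ with $a_{m_j} < \epsilon$.

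\textbf{Step 3: bounding the growth on one upcrossing.} The first step must be treated separately, because $b_{m_j}$ need not be small. From the recursion,
\[
a_{m_j+1} \le (1+b_{m_j})\,a_{m_j} + c_{m_j} < (1+B)\epsilon + c_{m_j}.
\]
For $m_j < k < n_j$ we have $a_k \ge \epsilon$, hence $b_k a_k \le b_k a_k^2/\epsilon$. Summing the recursion from $k=m_j+1$ to $n_j-1$ gives
\[
a_{n_j} - a_{m_j+1} \le \frac{1}{\epsilon}\sum_{k\ge m_j+1} b_k a_k^2 + \sum_{k\ge m_j+1} c_k .
\]
The left-hand side is at least $(2+B)\epsilon - (1+B)\epsilon - c_{m_j} = \epsilon - c_{m_j}$.

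\textbf{Step 4: contradiction.} Let $j\to\infty$. Then $c_{m_j}\to 0$, and both right-hand tails tend to $0$, by $\sum_k c_k<\infty$ and $\sum_k b_k a_k^2<\infty$. This yields $\epsilon \le 0$, a contradiction. Hence $\limsup_k a_k \le (2+B)\epsilon$ for every $\epsilon>0$. Since $a_k \ge 0$, it follows that $a_k\to 0$.

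The delicate point is Step 3. A naive choice of thresholds $\epsilon$ and $2\epsilon$ fails, because the single step out of the region $\{a_k<\epsilon\}$ can multiply $a_k$ by up to $1+B$, and that jump is not controlled by the summable quantities. Using the gap $(2+B)\epsilon$ versus $\epsilon$ and isolating that first step resolves it. Note also that no a priori boundedness of $\{a_k\}$ is needed: if $\limsup_k a_k = \infty$, the same upcrossings still exist.
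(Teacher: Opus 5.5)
Your proof is correct and complete. The paper itself gives no proof of this lemma: it quotes it from \cite{khanh2024fundamental} and uses it as a black box, applying it to \eqref{B3} in the proof of Theorem~\ref{IRGDconveroneNS}. So there is no in-paper argument to compare yours with.

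Your route is a clean self-contained proof:
\begin{itemize}
\item first $\liminf_k a_k=0$, from $\sum_k b_k=\infty$ and $\sum_k b_ka_k^2<\infty$;
\item then an upcrossing estimate between the levels $\epsilon$ and $(2+B)\epsilon$.
\end{itemize}

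You also correctly locate the only place where boundedness of $\{b_k\}$ enters. This is the exit step from $\{a_k<\epsilon\}$, where $b_{m_j}a_{m_j}$ is not dominated by $b_{m_j}a_{m_j}^2/\epsilon$. That hypothesis is genuinely needed. Take $a_{2i}=2^{-2i}$, $b_{2i}=2^{2i}$, $a_{2i+1}=1$, $b_{2i+1}=0$, and $c_k=0$. This satisfies every other condition, yet $a_k\not\to 0$.

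One detail is worth writing out explicitly: the choice of the pairs $(m_j,n_j)$.
\begin{enumerate}
\item Pick $m'>\max\{K_0,n_{j-1}\}$ with $a_{m'}<\epsilon$.
\item Then pick $n_j>m'$ with $a_{n_j}>(2+B)\epsilon$.
\item Only then take $m_j\ge m'$ to be the last index before $n_j$ with $a_{m_j}<\epsilon$.
\end{enumerate}
This order guarantees that the recursion holds at the exit step $m_j$ and that $m_j\to\infty$. Both facts are used in Steps 3 and 4.
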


\section{An inexact Riemannian gradient descent algorithm with one  Newton-Schulz iteration (IRGD-StieONS)}\label{Sec3}
In this section, we propose an inexact Riemannian gradient descent algorithm with one Newton-Schulz iteration (IRGD-StieONS) stated in Algorithm~\ref{alg:IRGDoneNS}. 
\begin{algorithm}[!ht]
    \caption{An Inexact Riemannian Gradient Descent algorithm on the Stiefel manifold with One Newton-Schulz iteration (IRGD-StieONS) }
    \label{alg:IRGDoneNS}
\renewcommand{\algorithmicrequire}{\textbf{Input:}}
    \renewcommand{\algorithmicensure}{\textbf{Output:}}
    \begin{algorithmic}[1]
        \REQUIRE An initial point $X_0 \in \St(p,n)^{0.5}$; a positive sequence $\{t_k\}$ and a scalar $\Bar{\alpha}>0$ for initial step size selection; and a sequence $\{\gamma_k\}$ and two scalars $\sigma \in (0, 1)$ and $\lambda \in (0,1)$ for line search conditions;  
        \FOR {$k = 0, 1, 2, \ldots$}
        \STATE \label{alg:st01} $\widehat{g}_k =  \mathrm{skew}(\nabla f(X_k)X_k^T)X_k$ and $ \widehat{d}_k= - \widehat{g}_k$;
        \STATE \label{alg:st02} $\alpha_k = \Bar{\alpha}_k =  \min(\Bar{\alpha},\frac{t_k}{2\|\widehat{g}_k\|_{X_k}})$;
         \WHILE{ $f(\Psi(X_k+\alpha_k\widehat{d}_k)) > f(X_k) + \sigma \alpha_k \left\langle\hat{g}_k,\hat{d}_k\right\rangle_{X_k} + \gamma_k$} \label{alg:st03}
          \STATE   $\alpha_k = \lambda \alpha_k$; \label{alg:st04}
         \ENDWHILE \label{alg:st05}
        \STATE Set $X_{k+1}=\Psi(X_k+\alpha_k\widehat{d}_k);$ \label{alg:st06}
        \ENDFOR
    \end{algorithmic}
\end{algorithm}

IRGD-StieONS does not require all iterates to stay on the manifold. Therefore, the initial $X_0$ only needs to lie in a neighbourhood of the Stiefel manifold, i.e., $X_0\in \St(p,n)^{0.5}:= \{X\in\mathbb{R}^{n\times p}\mid \|X^TX-I\|_{\F}\leq 0.5\}$. 
Step~\ref{alg:st01} computes an extension of the Riemannian gradient $\grad f:\St(p, n) \rightarrow \mathrm{T} \St(p, n)$ at $X_k$ 
by defining the mapping
$\widehat{g}: \mathbb{R}^{n \times p} \to \mathbb{R}^{n \times p}: X \mapsto
\widehat{g}(X) = \mathrm{skew}(\nabla f(X)X^T)X.
$
For simplicity, we write $\widehat{g}_k:=\widehat{g}(X_k)$.
Step~\ref{alg:st02} sets the initial step size to $\Bar{\alpha}_k = \min(\Bar{\alpha},\frac{t_k}{2\|\widehat{g}_k\|_{X_k}})$, 
where $\Bar{\alpha}>0$ is a constant that provides a fixed upper bound for all initial step sizes 
and the sequence $\{t_k\}$ is a positive sequence that converges sublinearly to 0. Note that we define $t_k / \|\hat{g}_k\|_{X_k} = + \infty$ in the case of $\|\hat{g}_k\|_{X_k}=0$.
Steps~\ref{alg:st03} to~\ref{alg:st05} select a step size $\alpha_k$ via a backtracking line search procedure to satisfy the approximate line search condition 
\begin{equation}\label{oneNSA-G}
f(\Psi(X_k+\alpha_k\widehat{d}_k))\leq f(X_k) + \sigma \alpha_k \left\langle\hat{g}_k,\hat{d}_k\right\rangle_{X_k} + \gamma_k,
 \end{equation}
where
the sequence $\{\gamma_k\}$ is a nonnegative sequence that converges to zero, see Assumption~\ref{assumgamma}. This approximate line search condition~\ref{oneNSA-G} is inspired by the work in~\cite{Ztt2023INCG}. 
The new iterate is computed by one Newton-Schulz iteration, as shown in
Step~\ref{alg:st06}.

Algorithm~\ref{alg:IRGDoneNS} can be interpreted as an inexact Riemannian gradient descent method on the Stiefel manifold, where the retraction is replaced by one Newton–Schulz iteration. 

 \subsection{Global Convergence Analysis}\label{conver global}





The global convergence of Algorithm~\ref{alg:IRGDoneNS} relies on Assumptions~\ref{assumgradL}, \ref{assum1}, and~\ref{assumgamma}.
 \begin{Assumption}\label{assumgradL}
      The gradient of the function $f$ is Lipschitz continuous with Lipschitz constant $\Bar{L}$ on $\St(p,n)^{0.5}$, i.e., for any $X_1,X_2 \in \St(p,n)^{0.5}$, $\|\nabla f(X_1)-\nabla f(X_2)\|_{\F}\leq \bar{L}\|X_1-X_2\|_{\F}$. 
 \end{Assumption}
 If $f \in C^2$, then $f$ is Lipschitz continuously differentiable on any compact set. Since $\St(p, n)^{0.5}$ is a compact set, any $C^2$ function $f$ satisfies Assumption~\ref{assumgradL}.
 \begin{Assumption}\label{assum1}
    The sequence $\{t_k\}$ is a positive sequence that satisfies
    $$
    \lim_{k\to \infty}t_k/t_{k-1} = 1, \sum_{k=0}^{\infty}t_k = \infty,\quad  \sum_{k=0}^{\infty}t_k^4<\infty, \hbox{ and } t_k \leq \lambda\sqrt{0.2} \quad \forall k \geq 0.
    $$ 
 \end{Assumption}
 Assumption~\ref{assum1} is mild, as it holds when 
 $t_k = \frac{1}{\varpi (k+1)^p}$ for any $p \in ( \frac{1}{4}, 1]$ and a sufficiently large constant~$\varpi$. 

Defines a sequence $\{\delta_k\},$ where $\delta_k=\|I-X_k^TX_k\|_{\F}$. The sequence $\{\delta_k\}$ quantifies the feasibility violation with respect to the Stiefel manifold. The sequence $\{t_k\}$ defines a sequence $\{\theta_k\}$ by the recurrence formula $\theta_{k+1} = (\theta_k + t_k^2)^2$ and $0\leq \delta_0 \leq \theta_0 = \frac12$. As shown in Lemma~\ref{B117}, the iterates $\{X_k\}$ go to $\St(p, n)$ as $k \rightarrow \infty$. Moreover, Lemma~\ref{B117} shows that $\delta_k =\|I-X_k^TX_k\|_{\F}\leq\theta_k\leq  0.5$, which yields that the eigenvalues of $X_k^TX_k$ lie in $[\frac{1}{2},\frac{3}{2}].$ Since $X_kX_k^T$ and $X_k^TX_k$ have the same nonzero eigenvalues, we have that the eigenvalues of
$I-\frac{1}{2}X_kX_k^T$ lie in $[\frac{1}{4},1].$ Consequently, the metric $\left\langle \cdot , \cdot \right\rangle_{X_k}$ is well-defined and $\|\cdot\|_{X_k}$ is equivalent with $\|\cdot\|_{\F}$, i.e.,
\begin{align}
\frac{1}{2}\|Y\|_{\F}\leq \|Y\|_{X_k}\leq \|Y\|_{\F}, \; \forall Y\in\mathbb{R}^{n\times p}.\label{eq09}
\end{align}

Lemma~\ref{B117} is used in Lemmas~\ref{lemmaxx} and Theorems~\ref{oneNSstep},~\ref{alpha1},~\ref{conge rate}, and its proof is given in Appendix~\ref{appendix1}.
 \begin{Lemma}\label{B117}
Suppose Assumption~\ref{assum1} holds. Then $\{\theta_k\}$ satisfies
\begin{align}
    \theta_k\leq 0.5,  
\quad \theta_k = \Theta(t_k^4),
\hbox{ and } 
\sum_{k=0}^{\infty} \theta_k < \infty.\label{eq15}
\end{align}
Moreover, $\theta_k$ is an upper bound of $\delta_k$ and the distance between the iterate $X_k$ and $\St(p,n)$, i.e.
\begin{equation}\label{r7}
    \begin{aligned}
     \|X_k-\P_{\St(p,n)}(X_k)\|_{\F}&\leq \delta_k \leq 
     \theta_k, \forall k .
\end{aligned}
\end{equation}
\end{Lemma}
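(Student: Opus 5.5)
The plan is an induction that propagates the feasibility bound through one Newton--Schulz step, followed by an elementary analysis of the scalar recursion $\theta_{k+1}=(\theta_k+t_k^2)^2$.

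\textbf{The Newton--Schulz identity.} Set $Z=X_k+\alpha_k\widehat{d}_k$, $M=Z^TZ$ and $E=I-M$. Since $\Psi(Z)^T\Psi(Z)=\frac14(3I-M)M(3I-M)$ and $M$ commutes with $3I-M$, a direct expansion gives
\begin{equation*}
I-\Psi(Z)^T\Psi(Z)=\tfrac34E^2+\tfrac14E^3 .
\end{equation*}
Hence $\|I-\Psi(Z)^T\Psi(Z)\|_{\F}\le\|E\|_{\F}^2$ whenever $\|E\|_{\F}\le1$.

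\textbf{Bounding $E$.} Write $\widehat{d}_k=-WX_k$ with $W=\mathrm{skew}(\nabla f(X_k)X_k^T)$. Then
\begin{equation*}
X_k^T\widehat{d}_k+\widehat{d}_k^TX_k=-X_k^T(W+W^T)X_k=0,
\end{equation*}
so the first-order term cancels exactly. This leaves
\begin{equation*}
E=(I-X_k^TX_k)-\alpha_k^2\widehat{d}_k^T\widehat{d}_k, \qquad \|E\|_{\F}\le\delta_k+\alpha_k^2\|\widehat{g}_k\|_{\F}^2 .
\end{equation*}
Backtracking only shrinks the step, so $\alpha_k\le t_k/(2\|\widehat{g}_k\|_{X_k})$. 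If $\delta_k\le0.5$, then \eqref{eq09} gives $\|\widehat{g}_k\|_{\F}\le2\|\widehat{g}_k\|_{X_k}$, hence $\alpha_k\|\widehat{g}_k\|_{\F}\le t_k$ and $\|E\|_{\F}\le\delta_k+t_k^2$.

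\textbf{Induction.} Assume $\delta_k\le\theta_k\le0.5$; the base case is $\delta_0\le\theta_0=\frac12$. Then $\|E\|_{\F}\le\theta_k+t_k^2\le0.5+0.2\lambda^2<1$. The identity above gives $\delta_{k+1}\le(\theta_k+t_k^2)^2=\theta_{k+1}$, and also $\theta_{k+1}\le0.7^2<0.5$. This proves $\delta_k\le\theta_k\le0.5$ for all $k$.

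\textbf{Distance to the manifold.} Take an SVD $X_k=U\Sigma V^T$. Then $\P_{\St(p,n)}(X_k)=UV^T$, so $\|X_k-\P_{\St(p,n)}(X_k)\|_{\F}=\|\Sigma-I\|_{\F}$. Since $|\sigma_i-1|=|1-\sigma_i^2|/(1+\sigma_i)\le|1-\sigma_i^2|$, this is at most $\|I-X_k^TX_k\|_{\F}=\delta_k$, which completes \eqref{r7}.

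\textbf{Asymptotics of $\theta_k$.} This is the step I expect to be the main obstacle, because $\theta_k$ might a priori stay larger than $t_k^4$.
\begin{itemize}
\item First, $\theta_k\to0$. Because $\theta_k+t_k^2\le0.7$, we have $\theta_{k+1}\le0.7\theta_k+0.7t_k^2$, and $t_k\to0$ since $\sum t_k^4<\infty$.
\item Next, for large $k$, $\theta_{k+1}\le2\theta_k^2+2t_k^4$.
\item Suppose $\theta_k>3t_{k-1}^4$ for all large $k$. Using $t_k/t_{k-1}\to1$, we get $2t_k^4\le\frac{2}{3}(1+\epsilon)\theta_k$, and $\theta_k$ is small. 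Together these give $\theta_{k+1}\le0.8\theta_k$, i.e. geometric decay. This contradicts $3t_{k-1}^4<\theta_k$, since a ratio tending to $1$ forces $t_k^4$ to decay subgeometrically.
\item So some index $k_0$ has $\theta_{k_0}\le3t_{k_0-1}^4$. From there, induction gives $\theta_{k+1}\le18t_{k-1}^8+2t_k^4\le3t_k^4$ once $t$ is small, again using $t_k/t_{k-1}\to1$.
\item The lower bound $\theta_{k+1}\ge t_k^4$ is trivial.
\end{itemize}
Using $t_k/t_{k-1}\to1$ to pass between $t_k^4$ and $t_{k-1}^4$ yields $\theta_k=\Theta(t_k^4)$. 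Summability of $\theta_k$ then follows from $\sum t_k^4<\infty$.
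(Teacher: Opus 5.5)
Your proposal is correct. The feasibility induction $\delta_k\le\theta_k\le 0.5$ is the same as the paper's, with two differences:
\begin{itemize}
\item You derive directly the two facts the paper only cites. The first is the Newton--Schulz contraction, via the exact identity $I-\Psi(Z)^T\Psi(Z)=\tfrac34E^2+\tfrac14E^3$; this also shows the bound is non-strict in general, e.g.\ when $E=0$. The second is $\|X-\P_{\St(p,n)}(X)\|_{\F}\le\|I-X^TX\|_{\F}$, via the SVD.
\item You make explicit the cancellation $X_k^T\widehat{d}_k+\widehat{d}_k^TX_k=0$, which the paper's ``triangle inequality'' silently relies on.
\end{itemize}

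The genuine difference is in the asymptotic part.

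\emph{Paper's route.} It proves $\theta_k\to0$ by a liminf/limsup argument. It then normalizes $r_k=\theta_k/t_k^4$, which satisfies $r_k=\frac{t_{k-1}^4}{t_k^4}(r_{k-1}t_{k-1}^2+1)^2$. Boundedness of $r_k$ comes from a record-subsequence contradiction using $\sqrt{\theta_{k-1}}\to0$, and passing to the limit then gives $r_k\to1$.

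\emph{Your route.} You get $\theta_k\to0$ from the contraction $\theta_{k+1}\le0.7\theta_k+0.7t_k^2$. You then run a trapping argument on the region $\theta_k\le3t_{k-1}^4$:
\begin{itemize}
\item The region must be visited. Otherwise $\theta_{k+1}\le0.8\theta_k$ eventually, which forces geometric decay of $t_k^4$, impossible when $t_k/t_{k-1}\to1$.
\item The region is forward-invariant for large $k$, since eventually $18t_{k-1}^8\le t_k^4$.
\item The matching lower bound is the trivial $\theta_{k+1}\ge t_k^4$.
\end{itemize}

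The paper's route is more compact and gives the sharper conclusion $\theta_k/t_k^4\to1$. Yours avoids the record-subsequence device and yields explicit eventual bounds $t_{k-1}^4\le\theta_k\le3t_{k-1}^4$. That is fully sufficient for $\theta_k=\Theta(t_k^4)$ and for $\sum_k\theta_k<\infty$.

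One wording point: negating ``$\theta_k>3t_{k-1}^4$ for all large $k$'' gives infinitely many indices in the region. You should say so, so that $k_0$ can be chosen beyond the threshold where $18t_{k-1}^8\le t_k^4$ holds.
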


 

Before stating Assumption~\ref{assumgamma}, we need to introduce some constants. (i) Since the function $f$ is continuously differentiable, it is Lipschitz continuous on the compact set $\St(p,n)^{0.5}$, with the Lipschitz constant denoted by $L$. (ii) Since $\widehat{g}$ is a continuous function, there exists an upper bound constant of $\|\widehat{g}\|_{\F}$ on the compact set $\St(p, n)^{0.5}$, i.e., 
\begin{align}\label{M_g}
    M_g: = \sup_{X\in \St(p,n)^{0.5}}\|\widehat{g}(X)\|_{\F}
\end{align}
is finite. (iii) By Assumption~\ref{assumgradL}, the function $\widehat{g}$ is Lipschitz continuous on $\St(p, n)^{0.5}$, i.e. there exists a constant $L_g > 0$ such that
\begin{equation}\label{L1}
         \|\widehat{g}(X_1)-\widehat{g}(X_2)\|_{\F}\leq L_g\|X_1-X_2\|_{\F}, \forall X_1,X_2\in\St(p,n)^{0.5}.
\end{equation}
(iv) By Assumption~\ref{assumgradL} and~\cite[Lemma 2.7]{NB2018Global}, there exists constant $L^{'}$ such that $f$ is $L^{'}$-retraction-smooth with respect to the retraction $R$ in $\St(p,n),$ i.e.,
\begin{equation}\label{Lrgd}
    f(R_{X}(V))\leq f(X) + \left\langle\grad f(X), V \right\rangle_{X}+\frac{L^{'}}{2}\|V\|_X^2, \forall X\in\St(p,n), V\in \T_X\St(p,n).
\end{equation}
(v) 
Since $\P_{\St(p,n)}$ is a smooth function, it is Lipschitz continuous on the compact set $\Omega:=\St(p,n)^{0.7}\cup \{X+V \mid X\in \St(p,n), V\in \T_X\St(p,n), \|X+V\|_2\leq 1 +\frac{\Bar{\alpha}}{\lambda}M_g\}$ with a Lipschitz constant denoted by $L_p$. Now, we are ready to state Assumption~\ref{assumgamma}.
 \begin{Assumption} \label{assumgamma} 
The sequence $\{\gamma_k\}$ satisfies that
$$\gamma_k\geq  a_1\delta_k +\frac{L}{\lambda^4}\zeta_{k+1} \text{ and } \gamma_k  = O(\delta_k +\min\{\theta_{k+1},(\delta_k+\Bar{\alpha}^2\|\widehat{g}_k\|_{\F}^2)^2 \}).$$
where $ a_1 = \max\{ \frac{\max\{1,\Bar{\alpha}L^{'}\}2\Bar{\alpha}L_gM_g +\lambda^2L + 3\Bar{\alpha}M_g^2+3\Bar{\alpha}^2M_g^2L^{'}}{\lambda^2}, 3L\}$ and $ \zeta_{k+1} =\min\{\theta_{k+1}, (\delta_k+\Bar{\alpha}^2\|\widehat{g}_k\|_{\F}^2 )^2\} +L_{p}(1+\Bar{\alpha}L_g)\delta_k$. 
 \end{Assumption}

We can choose 
\begin{align}
    \gamma_k = (a_1 +\frac{LL_p}{\lambda^4}(1+\alpha_kL_g))\delta_k + \frac{L}{\lambda^4}\min\{\theta_{k+1},(\delta_k +4\Bar{\alpha}^2\|\widehat{g}_k\|_{X_k}^2)^2\}\label{gammakk}
\end{align}
 to satisfy Assumption~\ref{assumgamma},
since
\begin{align}
    a_1\delta_k+\frac{L}{\lambda^4}\zeta_{k+1}&\leq (a_1 +\frac{LL_p}{\lambda^4}(1+\alpha_kL_g))\delta_k + \frac{L}{\lambda^4}\min\{\theta_{k+1},(\delta_k +4\Bar{\alpha}^2\|\widehat{g}_k\|_{X_k}^2)^2\}\nonumber\\
    &\leq (a_1 +\frac{LL_p}{\lambda^4}(1+\alpha_kL_g))\delta_k + \frac{16L}{\lambda^4}\min\{\theta_{k+1},(\delta_k +\Bar{\alpha}^2\|\widehat{g}_k\|_{\F}^2)^2\}. \nonumber
\end{align}

For simplicity, we use $\dot{X}_k$, $\dot{g}_k$, and $\dot{d}_k$ to denote $\P_{\St(p,n)}(X_{k-1}+ \alpha_{k-1}\widehat{d}_k)$, $ \widehat{g}(\dot{X}_k)$, and $-\dot{g}_k$, respectively. 
Since $\dot{X}_k \in \St(p, n)$, we have $\widehat{g}(\dot{X}_k) = \grad f(\dot{X}_k)$. Lemma~\ref{lemmaxx} provides bounds on the deviation between $X_k$ and $\dot{X}_k$, between their corresponding gradients, and further characterizes how these deviations affect the corresponding so-called inexact retraction steps.
Lemma~\ref{lemmaxx} is used in Theorems~\ref{oneNSstep},~\ref{alpha1},~\ref{IRGDconveroneNS},~\ref{conge rate}, and its proof is given in Appendix~\ref{proofoflemmaxx}.

\begin{Lemma}\label{lemmaxx}
Suppose Assumptions~\ref{assumgradL} and~\ref{assum1} hold. Then for any $k$, it holds that
\begin{align}
    \|X_k-\dot{X}_k\|_{\F}&\leq \delta_k;\label{b}\\
    \|\widehat{g}_k-\dot{g}_k\|_{\F}&\leq L_g\delta_k;\label{g-g}\\
\|\Psi({X_k}+\frac{\alpha_k}{\lambda}\hat{d}_k)-R_{\dot{X}_k}(\frac{\alpha_k}{\lambda}\dot{d}_k)\|_{\F}&\leq \frac{1}{\lambda^4}\zeta_{k+1}; \label{zeta}  \; \hbox{ and }\\
\|\Psi({X_k}+\alpha_k\hat{d}_k)-R_{\dot{X}_k}(\alpha_k\dot{d}_k)\|_{\F}&\leq \zeta_{k+1}. \label{eq04}
\end{align}
\end{Lemma}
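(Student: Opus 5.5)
Throughout I take $\dot{X}_k=\P_{\St(p,n)}(X_k)$, which is how $\dot X_k$ is used in \eqref{zeta}--\eqref{eq04} (where $R_{\dot X_k}(V)=\P_{\St(p,n)}(\dot X_k+V)$). The first two bounds are then immediate from earlier results. Bound \eqref{b} is exactly \eqref{r7} in Lemma~\ref{B117}. For \eqref{g-g}, Lemma~\ref{B117} gives $X_k\in\St(p,n)^{0.5}$, and $\dot X_k\in\St(p,n)\subset\St(p,n)^{0.5}$. Applying the Lipschitz bound \eqref{L1} together with \eqref{b} yields $\|\widehat g_k-\dot g_k\|_\F\le L_g\delta_k$.

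For \eqref{eq04}, set $Z=X_k+\alpha_k\widehat d_k$ and $\dot Z=\dot X_k+\alpha_k\dot d_k$, and split
\[
\Psi(Z)-R_{\dot X_k}(\alpha_k\dot d_k)=\bigl[\Psi(Z)-\P_{\St(p,n)}(Z)\bigr]+\bigl[\P_{\St(p,n)}(Z)-\P_{\St(p,n)}(\dot Z)\bigr].
\]
\textbf{Second bracket.} I check that $Z,\dot Z\in\Omega$. For $Z$ this follows from the feasibility estimate below, which puts $Z$ in $\St(p,n)^{0.7}$. For $\dot Z$ it follows from $\|\dot Z\|_2\le 1+\bar\alpha M_g$. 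Lipschitz continuity of $\P_{\St(p,n)}$ on $\Omega$, combined with \eqref{b} and \eqref{g-g}, then gives the bound $L_p(\delta_k+\alpha_kL_g\delta_k)\le L_p(1+\bar\alpha L_g)\delta_k$.

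\textbf{First bracket.} This is the one-step Newton--Schulz error. Write $E=I_p-Z^TZ$. The key structural fact is that $X_k^T\widehat g_k=X_k^T S X_k$ with $S=\mathrm{skew}(\nabla f(X_k)X_k^T)$ skew-symmetric, so $X_k^T\widehat d_k$ is skew and the cross terms in $Z^TZ$ cancel. Hence
\[
E=(I-X_k^TX_k)-\alpha_k^2\widehat g_k^T\widehat g_k,\qquad \|E\|_\F\le\delta_k+\alpha_k^2\|\widehat g_k\|_\F^2 .
\]
Next, $\Psi(Z)=Z(I+\tfrac12E)$ while $\P_{\St(p,n)}(Z)=Z(I-E)^{-1/2}=Z(I+\tfrac12E+\tfrac38E^2+\cdots)$. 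Summing the binomial series, for $\|E\|$ bounded away from $1$, gives $\|\Psi(Z)-\P_{\St(p,n)}(Z)\|_\F\le c\,\|Z\|_2\|E\|_\F^2$. The step-size cap $\alpha_k\|\widehat g_k\|\le t_k/2$ with $t_k\le\lambda\sqrt{0.2}$ keeps $\|E\|$ small enough that this constant is at most $1$, which produces the term $(\delta_k+\bar\alpha^2\|\widehat g_k\|_\F^2)^2$.

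For the $\theta_{k+1}$ branch of the minimum I use $\Psi(Z)=X_{k+1}$ and argue as in the proof of Lemma~\ref{B117}, namely $\|\Psi(Z)-\P(\Psi(Z))\|\le\delta_{k+1}\le(\delta_k+t_k^2)^2=\theta_{k+1}$-type recursion. I then relate $\P(\Psi(Z))$ to $\P(Z)$: both are polar factors of $Z$ times a symmetric positive definite matrix commuting with $Z^TZ$, so they coincide. This gives the $\min\{\theta_{k+1},\cdot\}$ form.

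For \eqref{zeta}, the same argument applies with $\alpha_k$ replaced by $\alpha_k/\lambda$. If backtracking occurred, $\alpha_k/\lambda\le\bar\alpha_k$ and the estimates above hold unchanged. Otherwise $\alpha_k/\lambda\le\bar\alpha_k/\lambda$, which inflates $\|E\|$ by at most $\lambda^{-2}$; squaring gives the factor $\lambda^{-4}$. The Lipschitz part only gains factors $1/\lambda\le1/\lambda^4$, and the $t_k\le\lambda\sqrt{0.2}$ restriction in Assumption~\ref{assum1} keeps $\|E\|\le 0.2+0.5$, so $Z\in\St(p,n)^{0.7}$.

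I expect the main obstacle to be the quantitative Newton--Schulz estimate. It requires showing that the series constant, times $\|Z\|_2$, is at most $1$ in the regime $\|E\|\le0.7$, and justifying the $\theta_{k+1}$ alternative rigorously. I would first try the eigenvalue-wise scalar inequality $|x^{1/2}\tfrac12(3-x)-1|$-type comparison on the singular values of $Z$, which reduces both branches to scalar calculus.
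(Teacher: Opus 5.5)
Your proposal is correct and is essentially the paper's proof. It uses the same split through $\P_{\St(p,n)}(Z)$, the same Lipschitz bound for $\P_{\St(p,n)}$ on $\Omega$ for the second piece, and, for the Newton--Schulz piece, the same chain $\|\Psi(Z)-\P_{\St(p,n)}(Z)\|_{\F}\le\|I-\Psi(Z)^T\Psi(Z)\|_{\F}\le\|I-Z^TZ\|_{\F}^2$ (from \eqref{pp}, invariance of the polar factor under $\Psi$, and \eqref{ZTZ}) that you already invoke for the $\theta_{k+1}$ branch.

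The paper applies this chain once, uniformly for both branches of the minimum and for the trial step $\alpha_k/\lambda$. There $\Psi(Z)\neq X_{k+1}$, so do not phrase that case through $\delta_{k+1}$. The bound $\|I-Z^TZ\|_{\F}\le\delta_k+\alpha_k^2\|\widehat g_k\|_{\F}^2$ is dominated by both $\delta_k+t_k^2$ and $\delta_k+\bar\alpha^2\|\widehat g_k\|_{\F}^2$ (up to a factor $\lambda^{-2}$ for the trial step), so the series-constant obstacle you anticipate never arises. If you want a self-contained argument anyway, the singular-value identity $1-\tfrac12 s(3-s^2)=\tfrac12(1-s)^2(2+s)\le(1-s^2)^2$ gives the same conclusion directly.
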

Theorem~\ref{oneNSstep} shows that the backtracking line search terminates in finite steps. More precisely, there exists a nonnegative integer $l_k$ such that $\alpha_k = \Bar{\alpha}_k\lambda^{l_k}$ satisfies the line search condition~\eqref{oneNSA-G}.
\begin{theorem}\label{oneNSstep}
Suppose Assumptions~\ref{assum1} and~\ref{assumgamma} hold. Then the approximate line search condition \eqref{oneNSA-G} is satisfied in a finite number of steps.
\end{theorem}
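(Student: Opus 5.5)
Our plan is to argue by contradiction: suppose the backtracking loop never terminates, so that \eqref{oneNSA-G} fails for every trial step $\alpha = \Bar{\alpha}_k\lambda^{l}$, $l=0,1,2,\dots$. The idea is to compare the inexact step $\Psi(X_k+\alpha\widehat{d}_k)$ with the exact polar retraction step $R_{\dot{X}_k}(\alpha\dot{d}_k)$ from the nearby feasible point. Here $\dot{X}_k$ denotes the projection used in Lemma~\ref{lemmaxx}. We then use the $L'$-retraction-smoothness \eqref{Lrgd} at $\dot{X}_k$, and show that the slack $\gamma_k$ absorbs all discrepancies. The trivial case $\|\widehat{g}_k\|_{X_k}=0$ is handled directly: then $\widehat{d}_k=0$ and $\Psi(X_k)$ differs from $X_k$ by $O(\delta_k)$, so Lipschitz continuity of $f$ gives $f(\Psi(X_k))\le f(X_k)+L\,O(\delta_k)\le f(X_k)+\gamma_k$, since $\gamma_k\ge 3L\delta_k$.

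In the general case we fix a trial $\alpha\le\Bar{\alpha}$ and chain three estimates.

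First, Lipschitz continuity of $f$ on $\St(p,n)^{0.5}$ together with \eqref{eq04} of Lemma~\ref{lemmaxx}, applied with $\alpha$ in place of $\alpha_k$ (or \eqref{zeta} for the $\alpha/\lambda$ version), gives
\[
f(\Psi(X_k+\alpha\widehat{d}_k))\le f(R_{\dot{X}_k}(\alpha\dot{d}_k))+\tfrac{L}{\lambda^4}\zeta_{k+1}.
\]

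Second, \eqref{Lrgd} gives
\[
f(R_{\dot{X}_k}(\alpha\dot d_k))\le f(\dot{X}_k)-\alpha\|\dot g_k\|_{\dot X_k}^2+\tfrac{L'}{2}\alpha^2\|\dot g_k\|^2_{\dot X_k}.
\]

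Third, we convert back to quantities at $X_k$. By \eqref{b}, $f(\dot X_k)\le f(X_k)+L\delta_k$. By \eqref{g-g}, the bound $M_g$, and the fact that the metrics at $X_k$ and $\dot X_k$ differ by $O(\delta_k)$ (since $\|X_k-\dot X_k\|_F\le\delta_k$), we get
\[
\big|\|\dot g_k\|^2_{\dot X_k}-\|\widehat g_k\|^2_{X_k}\big|\le c\,\delta_k,
\]
with $c$ built from $2L_gM_g$ and $3M_g^2$. These are exactly the ingredients of $a_1$.

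Combining the three estimates, for $\alpha\le\Bar\alpha$ we expect
\[
f(\Psi(X_k+\alpha\widehat d_k))\le f(X_k)-\alpha\big(1-\tfrac{L'}{2}\alpha\big)\|\widehat g_k\|_{X_k}^2+a_1\delta_k+\tfrac{L}{\lambda^4}\zeta_{k+1}.
\]
Assumption~\ref{assumgamma} gives $a_1\delta_k+\frac{L}{\lambda^4}\zeta_{k+1}\le\gamma_k$. It then suffices that $1-\frac{L'}{2}\alpha\ge\sigma$, i.e.\ $\alpha\le 2(1-\sigma)/L'$. This holds once $l$ is large enough, which contradicts the assumption that the loop never terminates. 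The termination index is explicitly bounded by $l_k\le\max\{0,\lceil\log_\lambda(2(1-\sigma)/(L'\Bar\alpha_k))\rceil\}$.

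The main obstacle is checking that all points involved lie in the regions where the constants are valid. We need $X_k\in\St(p,n)^{0.5}$, which Lemma~\ref{B117} gives via $\delta_k\le\theta_k\le 0.5$. We also need $\Psi(X_k+\alpha\widehat d_k)$ and the retracted points to lie in $\St(p,n)^{0.5}$ or in $\Omega$, so that $L$ and $L_p$ apply. For this we will rely on Lemma~\ref{lemmaxx}, whose bounds \eqref{zeta}–\eqref{eq04} were stated for the accepted $\alpha_k$ but whose proofs should hold verbatim for any trial step $\le\Bar\alpha_k$, using $t_k\le\lambda\sqrt{0.2}$ and $\alpha\|\widehat g_k\|\le t_k/2$. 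We also need to make sure the $\delta_k$-discrepancy constants match those in $a_1$; a slightly larger constant would still be fine after rescaling, because the $\max\{1,\Bar\alpha L'\}$ factor already accommodates the quadratic term.
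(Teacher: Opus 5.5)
Your argument is correct, but it takes a different route from the paper's.

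\textbf{The paper's route.} The paper argues purely by continuity. For $\Phi(\alpha)=f(\Psi(X_k+\alpha\widehat d_k))-f(X_k)-\sigma\alpha\langle\widehat g_k,\widehat d_k\rangle_{X_k}-\gamma_k$, it shows $\Phi(0)\le 3L\delta_k-\gamma_k<0$ using $\|\Psi(X_k)-\dot X_k\|_{\F}\le 2\delta_k$, $\|X_k-\dot X_k\|_{\F}\le\delta_k$ and Lipschitz continuity of $f$. Continuity of $\Phi$ then gives an interval $[0,\alpha_k']$ on which \eqref{oneNSA-G} holds. Termination is thus driven entirely by the slack $\gamma_k$, not by descent, so the paper needs neither \eqref{Lrgd}, nor $L_g$, nor the $\zeta$-bounds. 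The price is that it gives no bound on the number of backtracks. Strictly, it also needs $\gamma_k>3L\delta_k$, which outside trivial cases comes from the $\zeta_{k+1}$ term in Assumption~\ref{assumgamma}.

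\textbf{Your route.} Your Armijo-type argument is essentially the computation the paper defers to Theorem~\ref{alpha1}, applied to an arbitrary small trial step rather than to the rejected step $\alpha_k/\lambda$. It uses more than the paper's argument. You implicitly need Assumption~\ref{assumgradL} for $L'$, $L_g$ and Lemma~\ref{lemmaxx}, and you must rerun the proof of \eqref{eq04} for trial steps. That rerun is legitimate, since every trial step satisfies $\alpha\le\Bar\alpha$ and $2\alpha\|\widehat g_k\|_{X_k}\le t_k$. In return you get an explicit cap on $l_k$, and with it the step-size lower bound of Theorem~\ref{alpha1} at once. Your argument also does not depend on strict negativity at $\alpha=0$.

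\textbf{The constants.} Your worry about matching $a_1$ is unfounded. You keep $\alpha(1-\frac{L'}{2}\alpha)\ge\sigma\alpha>0$ as a single coefficient, so the gradient/metric discrepancy costs at most $\alpha c\,\delta_k\le\Bar\alpha(2L_gM_g+3M_g^2)\delta_k$. The inequality $L+\Bar\alpha(2L_gM_g+3M_g^2)\le a_1$ then holds directly. No rescaling is needed, and none would be available anyway, since $\gamma_k$ is given.
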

\begin{proof}

For any $k$, define a function $\Phi: \mathbb{R}\rightarrow \mathbb{R} $ by
    \begin{equation*}
        \Phi(\alpha) = f(\Psi(X_k+\alpha\widehat{d}_k)) - f(X_k)-\sigma \alpha \left\langle\hat{g}_k,\widehat{d}_k\right\rangle_{X_k} -\gamma_k.
    \end{equation*}
    By the trigonometric inequality, we have
    \begin{align}
            \|\Psi(X_k)-\dot{X}_k\|_{\F}&\leq\|\Psi(X_k)-X_k\|_{\F}+\|X_k-\dot{X}_k\|_{\F} \leq \frac{1}{2}\|X_k\|_2\|I-X_k^TX_k\|_{\F} +\|X_k-\dot{X}_k\|_{\F} \nonumber \\
            &\leq  2\delta_k , \label{o1}
    \end{align}
    where the third inequality follows from Lemmas~\ref{B117} and~\ref{lemmaxx} and that $X_k \in \St(p, n)^{0.5}$ implies $\|X_k\|_2\leq 2$.
Since $f$ is Lipschitz continuous with constant $L$, it follows from~\eqref{o1} and~\eqref{b} that
\begin{align*}
\Phi(0) = f(\Psi(X_k)) - f(X_k) -\gamma_k \leq (f(\dot{X}_k) + 2L\delta_k )-(f(\dot{X}_k)-L\delta_k)-\gamma_k < 0,
\end{align*}
where the last inequality follows from $\gamma_k \geq 3L\delta_k$ implied by Assumption~\ref{assumgamma}.
   Therefore, since $\Phi$ is continuous as a consequence of the continuity of both $f$ and $\Psi$, there exists $\alpha_k'>0$ such that $\Phi(\alpha)\leq 0$ for all $\alpha\in [0, \alpha^{'}_k]$. 
    Therefore,~\eqref{oneNSA-G} holds for all $\alpha\in [0, \alpha^{'}_k]$, which completes the proof.
\end{proof} 
Theorem~\ref{alpha1} gives a lower bound of the accepted step size $\alpha_k$.
\begin{theorem}\label{alpha1}
    Suppose Assumptions~\ref{assumgradL}, \ref{assum1}, and \ref{assumgamma} hold. 
    Let $\alpha_k$ be the accepted step size in the $k$th
iteration of Algorithm~\ref{alg:IRGDoneNS}. Then we have
\begin{equation}\label{B1}
    \alpha_k\geq \min\{\Bar{\alpha}_k,\frac{2\lambda}{L^{'}} (1-\sigma)\}  = \min\{\Bar{\alpha},\frac{t_k}{2\|\widehat{g}_k\|_{X_k}}, \frac{2\lambda}{L^{'}} (1-\sigma)\} \quad \forall k, \text{ and }\sum_{k = 0}^{\infty}\alpha_k = \infty.
\end{equation}
    
\end{theorem}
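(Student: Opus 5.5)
The argument is the usual backtracking one: if a trial step was rejected, the line search condition failed there, and we compare that failure with the exact retraction step from $\dot X_k$. If $\alpha_k=\Bar{\alpha}_k$ there is nothing to prove, so assume some backtracking took place. Then $\alpha_k/\lambda$ was rejected, i.e.
\begin{equation*}
f(\Psi(X_k+\tfrac{\alpha_k}{\lambda}\widehat{d}_k)) > f(X_k) - \sigma\tfrac{\alpha_k}{\lambda}\|\widehat{g}_k\|_{X_k}^2 + \gamma_k .
\end{equation*}
I will bound the left side from above by the exact polar retraction at $\dot X_k$ and use the retraction-smoothness~\eqref{Lrgd}. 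Combining the two bounds should force $\alpha_k/\lambda \ge 2(1-\sigma)/L'$, up to perturbation terms that $\gamma_k$ is designed to absorb.

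\textbf{Upper bound on the rejected trial.} By~\eqref{zeta} and the Lipschitz continuity of $f$,
\begin{equation*}
f(\Psi(X_k+\tfrac{\alpha_k}{\lambda}\widehat{d}_k)) \le f(R_{\dot X_k}(\tfrac{\alpha_k}{\lambda}\dot d_k)) + \tfrac{L}{\lambda^4}\zeta_{k+1}.
\end{equation*}
Then~\eqref{Lrgd} applied with $V=\tfrac{\alpha_k}{\lambda}\dot d_k$ gives
\begin{equation*}
f(R_{\dot X_k}(\tfrac{\alpha_k}{\lambda}\dot d_k)) \le f(\dot X_k) - \tfrac{\alpha_k}{\lambda}\|\dot g_k\|_{\dot X_k}^2 + \tfrac{L'\alpha_k^2}{2\lambda^2}\|\dot g_k\|_{\dot X_k}^2 .
\end{equation*}
Next I convert every $\dot{}$ quantity into its hatted counterpart:
\begin{itemize}
\item $|f(\dot X_k)-f(X_k)|\le L\delta_k$ by~\eqref{b};
\item $\|\dot g_k-\widehat g_k\|_{\F}\le L_g\delta_k$ by~\eqref{g-g};
\item the metrics at $\dot X_k$ and $X_k$ differ by $O(\|X_k-\dot X_k\|_{\F})=O(\delta_k)$, since $|\langle Y,(X_kX_k^T-\dot X_k\dot X_k^T)Y\rangle|\le 3\delta_k\|Y\|_{\F}^2$;
\item all gradients are bounded by $M_g$, and $\alpha_k\le\Bar{\alpha}$.
\end{itemize}
Together these should give, with $c:=a_1\lambda^2$,
\begin{equation*}
-\tfrac{\alpha_k}{\lambda}\|\dot g_k\|_{\dot X_k}^2\Bigl(1-\tfrac{L'\alpha_k}{2\lambda}\Bigr) \le -\tfrac{\alpha_k}{\lambda}\|\widehat g_k\|_{X_k}^2\Bigl(1-\tfrac{L'\alpha_k}{2\lambda}\Bigr) + \tfrac{c}{\lambda^2}\delta_k .
\end{equation*}
Here the terms $2\Bar{\alpha}L_gM_g\max\{1,\Bar{\alpha}L'\}$, $3\Bar{\alpha}M_g^2$ and $3\Bar{\alpha}^2M_g^2L'$ in $a_1$ are exactly these conversion costs, divided by $\lambda^2$ because of the factor $1/\lambda$ in the rejected step size. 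If $1-\tfrac{L'\alpha_k}{2\lambda}<0$, then $\alpha_k\ge 2\lambda/L'$ and we are done anyway, so the sign of the bracket causes no trouble.

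\textbf{Combining.} Putting the two estimates together and using $\gamma_k\ge a_1\delta_k+\tfrac{L}{\lambda^4}\zeta_{k+1}$ from Assumption~\ref{assumgamma}, all $\delta_k$ and $\zeta_{k+1}$ terms cancel. We are left with
\begin{equation*}
\tfrac{\alpha_k}{\lambda}\|\widehat g_k\|_{X_k}^2\Bigl(1-\tfrac{L'\alpha_k}{2\lambda}-\sigma\Bigr) < 0 ,
\end{equation*}
hence $\alpha_k > 2\lambda(1-\sigma)/L'$. If $\widehat g_k=0$ the condition is satisfied at $\Bar{\alpha}_k$ itself, since $\gamma_k\ge 3L\delta_k$ as in Theorem~\ref{oneNSstep}. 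This proves the lower bound in~\eqref{B1}.

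\textbf{Divergence of the step sizes.} Since $\|\widehat g_k\|_{X_k}\le M_g$, we have $\tfrac{t_k}{2\|\widehat g_k\|_{X_k}}\ge \tfrac{t_k}{2M_g}$. As $t_k\to0$ (because $\sum t_k^4<\infty$), eventually $\alpha_k\ge \tfrac{t_k}{2M_g}$, and $\sum_k t_k=\infty$ by Assumption~\ref{assum1} gives $\sum_k\alpha_k=\infty$.

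The main obstacle is the conversion step: matching each perturbation term precisely to the constant $a_1$, in particular controlling the change of metric between $X_k$ (off the manifold) and $\dot X_k$ uniformly in $\alpha_k\le\Bar{\alpha}$.
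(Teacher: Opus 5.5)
Your proof is correct and follows the paper's own argument: compare the rejected trial $\alpha_k/\lambda$ with $R_{\dot X_k}(\frac{\alpha_k}{\lambda}\dot d_k)$ via \eqref{zeta}, apply \eqref{Lrgd}, convert the dotted quantities to $\widehat g_k$ and $\|\cdot\|_{X_k}$ at $O(\delta_k)$ cost, absorb these errors into $\gamma_k$, and handle $\widehat g_k=0$ separately. The differences are minor: you factor out $(1-\frac{L'\alpha_k}{2\lambda})$ with a sign split rather than bounding the two terms separately (which is the source of the paper's $\max\{1,\bar\alpha L'\}$); you show $\sum_k\alpha_k=\infty$ directly rather than by contradiction; and your label $c:=a_1\lambda^2$ should exclude the $\lambda^2 L$ part of $a_1$, since that part is already spent on $|f(\dot X_k)-f(X_k)|\le L\delta_k$.
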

\begin{proof}
      By Theorem \ref{oneNSstep}, there exists an integer $l_k\geq 0$ such that $\alpha_k = \Bar{\alpha}_k\lambda^{l_k} >0 $, where $l_k$ is the smallest nonnegative integer satisfying~\eqref{oneNSA-G}. We distinguish the following cases.
      
      \medskip
    \noindent
    \textbf{Case 1:} $\|\widehat{g}_k\|_{X_k} = 0.$ In this case, $\Bar{\alpha}_k = \Bar{\alpha}$ and together with~\eqref{eq09} gives $\|\widehat{g}_k\|_{\F} = 0.$  Since $f$ is Lipschitz continuous with constant $L$, we obtain
    \begin{align}     f(\Psi(X_k+\Bar{\alpha}_k\widehat{d}_k))\leq& f(X_k) + L\|\Psi(X_k+\Bar{\alpha}_k\widehat{d}_k)-X_k\|_{\F}\nonumber\\
\leq& f(X_k) +L (\Bar{\alpha}_k\|\widehat{g}_k\|_{\F} + \frac{1}{2}\|X_k\|_2\|X_k^TX_k-I_p\|_{\F} + \frac{1}{2}\Bar{\alpha}_k^2\|X_k\|_2\|\widehat{g}_k\|_{\F}^2\nonumber\\
&+\frac{\Bar{\alpha}_k}{2}\|\widehat{g}_k\|_{\F}\|X_k^TX_k-I_p\|_{\F}+ \frac{1}{2}\Bar{\alpha}_k^3\|\widehat{g}_k\|_{\F}^3) \text{ (by triangle inequality)}\nonumber\\
\leq &f(X_k) +L \delta_k \leq f(X_k) + \gamma_k ,\label{r8}
    \end{align}
    where the third inequality follows from Lemma~\ref{B117} and $\|\widehat{g}_k\|_{\F} =0$, the last inequality follows from Assumption~\ref{assumgamma}.
   Therefore, condition~\eqref{oneNSA-G} is satisfied with $\alpha_k=\bar{\alpha}_k$, implying $\alpha_k = \Bar{\alpha}_k>0.$
    \medskip
    \noindent
    \textbf{Case 2:} $\|\widehat{g}_k\|_{X_k} \neq 0$. If $l_k= 0$, then $\alpha_k = \Bar{\alpha}_k>0$ which yields the inequality in~\eqref{B1}. Next, we suppose 
    $l_k\geq 1$.
    In this case, the previous trial step size $\frac{\alpha_k}{\lambda}$ fails to satisfy~\eqref{oneNSA-G}, yielding the inequality
\begin{equation}\label{istep}
       f(\Psi(X_k+\frac{\alpha_k}{\lambda}\widehat{d}_k))\geq f(X_k)+\sigma \frac{\alpha_k}{\lambda} \left\langle\hat{g}_k,\widehat{d}_k\right\rangle_{X_k} + \gamma_k.
\end{equation}
By Lemma~\ref{lemmaxx} and $f$ is Lipschitz continuous with Lipschitz constant $L$, we have
\begin{align}
    f(R_{\dot{X}_k}(\frac{\alpha_k}{\lambda}\dot{d}_k))\geq f(\Psi(X_k+\frac{\alpha_k}{\lambda}\widehat{d}_k)) -  \frac{L}{\lambda^4}\zeta_{k+1},
    f(\dot{X}_k)\leq f(X_k)+L\delta_k .\nonumber
\end{align} 
Combining these with~\eqref{Lrgd} gives
    \begin{align}
        f(\Psi(X_k+\frac{\alpha_k}{\lambda}\widehat{d}_k))&\leq f(X_k) + \frac{\alpha_k}{\lambda}\left\langle\dot{d}_k, \dot{g}_k\right\rangle_{\dot{X}_k}+\frac{\alpha_k^2}{2\lambda^2}L^{'}\|\dot{d}_k\|_{\dot{X}_k}^2+\frac{L}{\lambda^4}\zeta_{k+1} + L\delta_k \label{r4}
    \end{align}
By Lemma~\ref{lemmaxx} and~\eqref{eq09}, we have $\|\widehat{g}_k-\dot{g}_k\|_{\dot{X_k}}\leq\|\widehat{g}_k-\dot{g}_k\|_{\F}\leq L_g\delta_k .$ It follows that 
\begin{gather}
     \|\widehat{g}_k\|_{\dot{X}_k}-L_g\delta_k\leq\|\dot{g}_k\|_{\dot{X_k}}\leq \|\widehat{g}_k\|_{\dot{X_k}}+L_g\delta_k, \quad \text{ (by~\eqref{g-g})} \label{w1} \\
\|\widehat{g}_k\|_{\dot{X_k}}\|\dot{g}_k\|_{\dot{X_k}}-L_g\delta_k\|\dot{g}_k\|_{\dot{X_k}}
     \leq\|\dot{g}_k\|_{\dot{X_k}}^2
     \leq \|\widehat{g}_k\|_{\dot{X_k}}\|\dot{g}_k\|_{\dot{X_k}}+L_g\delta_k\|\dot{g}_k\|_{\dot{X_k}}, \hbox{ and}
     \text{ (\eqref{w1} times $\|\dot{g}_k\|_{\dot{X_k}}$)} \nonumber\\
     \|\widehat{g}_k\|_{\dot{X_k}}^2 -L_g\delta_k\|\widehat{g}_k\|_{\dot{X_k}}-L_g\delta_k\|\dot{g}_k\|_{\dot{X_k}}
     \leq\|\dot{g}_k\|_{\dot{X_k}}^2
     \leq \|\widehat{g}_k\|_{\dot{X_k}}^2+L_g\delta_k\|\widehat{g}_k\|_{\dot{X_k}}+L_g\delta_k\|\dot{g}_k\|_{\dot{X_k}}.
    \text{(by \eqref{w1})} \label{w3}
\end{gather}
Inserting~\eqref{w3} into~\eqref{r4} gives 
    \begin{align}
        &f(\Psi(X_k+\frac{\alpha_k}{\lambda}\widehat{d}_k))\leq f(X_k) - \frac{\alpha_k}{\lambda}(\|\widehat{g}_k\|_{\dot{X}_k}^2-L_g\delta_k\|\widehat{g}_k\|_{\dot{X}_k}- L_g\delta_k\|\dot{g}_k\|_{\dot{X}_k} )\nonumber\\
        &+\frac{\alpha_k^2}{2\lambda^2}L^{'}(\|\widehat{g}_k\|_{\dot{X}_k}^2+L_g\delta_k\|\widehat{g}_k\|_{\dot{X}_k}+L_g\delta_k\|\dot{g}_k\|_{\dot{X}_k})+\frac{L}{\lambda^4}\zeta_{k+1} + L\delta_k\nonumber\\
        =& f(X_k) - \frac{\alpha_k}{\lambda}\|\widehat{g}_k\|_{\dot{X}_k}^2 + \frac{\alpha_k^2}{2\lambda^2}L^{'}\|\widehat{g}_k\|_{\dot{X}_k}^2 + \frac{L}{\lambda^4}\zeta_{k+1} + L\delta_k\nonumber \\
        & + \frac{\alpha_kL_g(\|\widehat{g}_k\|_{\dot{X}_k}+\|\dot{g}_k\|_{\dot{X}_k}) }{\lambda}\delta_k + \frac{\alpha_k^2L^{'}L_g(\|\widehat{g}_k\|_{\dot{X}_k}+\|\dot{g}_k\|_{\dot{X}_k}) }{2\lambda^2}\delta_k\nonumber\\
        \leq& f(X_k) - \frac{\alpha_k}{\lambda}\|\widehat{g}_k\|_{\dot{X}_k}^2 + \frac{\alpha_k^2}{2\lambda^2}L^{'}\|\widehat{g}_k\|_{\dot{X}_k}^2 + \frac{L}{\lambda^4}\zeta_{k+1} + L\delta_k+ \frac{\max\{1,\Bar{\alpha}L^{'}\}2\Bar{\alpha}L_gM_g}{\lambda^2}\delta_k,\label{r6}
    \end{align}
where the last inequality follows from that $\lambda\geq \lambda^2, 2\lambda^2\geq \lambda^2$, $\bar{\alpha} \geq \alpha_k$, and $\|\widehat{g}_k\|_{\dot{X}_k}+\|\dot{g}_k\|_{\dot{X}_k}\leq 2 M_g$. Combining (\ref{istep}) and (\ref{r6}) yields 
\begin{align}
    -\sigma \frac{\alpha_k}{\lambda} \|\widehat{g}_k\|_{X_k}^2
    &\leq- \frac{\alpha_k}{\lambda}\|\widehat{g}_k\|_{\dot{X}_k}^2 + \frac{\alpha_k^2}{2\lambda^2}L^{'}\|\widehat{g}_k\|_{\dot{X}_k}^2 + \frac{L}{\lambda^4}\zeta_{k+1}+ L\delta_k+ \frac{\max\{1,\Bar{\alpha}L^{'}\}2\Bar{\alpha}L_gM_g}{\lambda^2}\delta_k-\gamma_k\label{eq35}
\end{align}
It follows from~\eqref{Rmetric} that
\begin{align}
\|\widehat{g}_k\|_{X_k}^2-\|\widehat{g}_k\|^2_{\dot{X}_k}&\leq \frac{1}{2} \|\widehat{g}_k\|_F^2 \|X_kX_k^T-\dot{X}_k\dot{X}_k^T\|_{\F}\leq \frac{M_g^2}{2}\|(X_k-\dot{X}_k)X^T_k+ \dot{X}_k(X_k-\dot{X}_k)^T\|_{\F}\nonumber\\
& \leq \frac{M_g^2}{2}(\|X_k\|_2+\|\dot{X}_k\|_2)\delta_k\leq \frac{3M_g^2}{2}\delta_k, \label{eq36}
\end{align}
where the last inequality follows from $\|\dot{X}_k\|_2 = 1$ and $\|X_k\|_2\leq \sqrt{1+\delta_k}<2$. Inserting~\eqref{eq36} into~\eqref{eq35} gives
\begin{align}
    -\sigma \frac{\alpha_k}{\lambda} \|\widehat{g}_k\|_{X_k}^2
    \leq&- \frac{\alpha_k}{\lambda}\|\widehat{g}_k\|_{X_k}^2+ \frac{3\Bar{\alpha}M_g^2}{2\lambda}\delta_k   + \frac{\alpha_k^2}{2\lambda^2}L^{'}\|\widehat{g}_k\|_{X_k}^2 + \frac{3\Bar{\alpha}^2M^2_g}{4\lambda^2}L^{'}\delta_k \nonumber\\
    +& \frac{L}{\lambda^4}\zeta_{k+1}+ L\delta_k+ \frac{\max\{1,\Bar{\alpha}L^{'}\}2\Bar{\alpha}L_g M_g}{\lambda^2}\delta_k-\gamma_k \nonumber\\
    \leq &-\frac{\alpha_k}{\lambda}\|\widehat{g}_k\|_{X_k}^2 + \frac{\alpha_k^2}{2\lambda^2}L^{'}\|\widehat{g}_k\|_{X_k}^2. \text{ (by Assumption~\ref{assumgamma})}\nonumber
\end{align}
Dividing both sides by $\alpha_k \|\widehat{g}_k\|_{\F}^2/\lambda >0$ gives 
\begin{equation*}
     \alpha_k \geq \frac{2\lambda}{L^{'}} (1-\sigma)>0.
\end{equation*}
It follows from the above two cases that the inequality in~\eqref{B1} holds.

Next, we prove $\sum_{k = 0}^{\infty}\alpha_k = \infty$ by contradiction. Suppose $\sum_{k = 0}^{\infty}\alpha_k < \infty$, then $\lim_{k\to\infty}\alpha_k = 0.$ Consequently, there exists a positive integer $N$ such that for all $k>N$, $\alpha_k<\min\{\Bar{\alpha},\frac{2\lambda}{L^{'}}(\frac{1}{4}-\sigma)\}$. Combining this with the inequality in~\eqref{B1} yields
$\alpha_k=\frac{t_k}{2\|\widehat{g}_k\|_{X_k}}\geq \frac{1}{2M_g}t_k, \forall k>N.$
Since $\sum_{k=0}^{\infty}t_k = \infty$, it follows that $\sum_{k = 0}^{\infty}\alpha_k = \infty$, which contradicts the assumption. Hence, $\sum_{k = 0}^{\infty}\alpha_k = \infty$.
\end{proof}

In Theorem~\ref{IRGDconveroneNS}, we present a global convergence analysis of Algorithm~\ref{alg:IRGDoneNS}.

\begin{theorem}\label{IRGDconveroneNS}
    Suppose Assumptions~\ref{assumgradL},~\ref{assum1}, and~\ref{assumgamma} hold. Then Algorithm~\ref{alg:IRGDoneNS} converges in the sense that     
    \begin{equation*}\label{g}
        \lim_{k\rightarrow \infty} \|\hat{g}_k\|_{X_k} = 0.
    \end{equation*}
    Moreover, if $X_*$ is an accumulation point of the sequence $\{X_k\}$, then $f(X_k)\to f(X_*)$. 
    Furthermore, if $ t_k = \frac{2\iota M_g}{(k+1)^{\frac{1}{4}+\ell}},$ then Algorithm~\ref{alg:IRGDoneNS} returns a point $X_k$ satisfying  
    $\|\hat{g}_k\|_{X_k}\leq \epsilon$ within at most $$((\frac{3}{4}-\ell)\frac{f(X_0)-f^*+\sum_{k=0}^{\infty}\gamma_k+\frac{\sigma \epsilon^2\iota}{3/4-\ell}}{\sigma \epsilon^2\iota})^{\frac{1}{3/4-\ell}}$$
    iterations and $\delta_k \leq \epsilon $ at most $ (\frac{\Bar{M}(2\iota M_g )^4}{\epsilon})^{\frac{1}{1+4\ell}}-1$ iteration, where $\iota = \min\{\Bar{\alpha},\frac{2\lambda}{L^{'}}(1-\sigma)\}, \ell\in(0,\frac{3}{4})$ is a constant and  $f^{*} = \inf_{X\in \St(p,n)^{0.5}} f(X)>-\infty$, $f^{*}$ naturally exists because $f$ is continuous and the set $\St(p,n)^{0.5}$ is compact. Specifically, if $\ell$ is sufficiently small, in the limit, Algorithm~\ref{alg:IRGDoneNS} returns $X_k$ with $\|\widehat{g}_k\|_{X_k}\leq \epsilon$ in $\mathcal{O}(1/\epsilon^{\frac{8}{3}})$ iterations and $\delta_k \leq \epsilon$ in $\mathcal{O}(1/\epsilon)$ iterations.
\end{theorem}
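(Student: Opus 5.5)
We build everything on the accepted line search condition~\eqref{oneNSA-G}. Since $\langle \widehat{g}_k,\widehat{d}_k\rangle_{X_k}=-\|\widehat{g}_k\|_{X_k}^2$, it gives
\begin{equation*}
f(X_{k+1})\le f(X_k)-\sigma\alpha_k\|\widehat{g}_k\|_{X_k}^2+\gamma_k .
\end{equation*}
Summing from $0$ to $K$ and using $f\ge f^*$ on $\St(p,n)^{0.5}$ (Lemma~\ref{B117} keeps all iterates there) yields
\begin{equation*}
\sigma\sum_k\alpha_k\|\widehat{g}_k\|_{X_k}^2\le f(X_0)-f^*+\sum_k\gamma_k .
\end{equation*}
The right-hand side is finite. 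Indeed, Assumption~\ref{assumgamma} gives $\gamma_k=O(\delta_k+\theta_{k+1})$, and $\sum\theta_k<\infty$ with $\delta_k\le\theta_k$ by Lemma~\ref{B117}.

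By Theorem~\ref{alpha1}, $\alpha_k\ge\min\{\iota, t_k/(2\|\widehat{g}_k\|_{X_k})\}$ with $\iota=\min\{\bar\alpha,\frac{2\lambda}{L'}(1-\sigma)\}$. Hence
\begin{equation*}
\alpha_k\|\widehat{g}_k\|_{X_k}^2\ge\min\{\iota\|\widehat{g}_k\|^2_{X_k},\tfrac{t_k}{2}\|\widehat{g}_k\|_{X_k}\}\ge \tfrac{t_k}{2M_g}\min\{\iota, 1\}\cdot\|\widehat{g}_k\|^2_{X_k},
\end{equation*}
after bounding $t_k$ and $\|\widehat{g}_k\|\le M_g$. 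So $\sum_k t_k a_k^2<\infty$ with $a_k:=\|\widehat{g}_k\|_{X_k}$.

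To pass from summability to $a_k\to0$ we invoke Lemma~\ref{Lemma1} with $b_k=c\,t_k$. This is bounded with divergent sum by Assumption~\ref{assum1}. The required recursion is $a_{k+1}-a_k\le b_ka_k+c_k$. To get it we bound $|a_{k+1}-a_k|$ using Lipschitz continuity~\eqref{L1} of $\widehat{g}$ and the change in metric (as in~\eqref{eq36}):
\begin{equation*}
a_{k+1}-a_k\le L_g\|X_{k+1}-X_k\|_{\F}+C\delta_k+C\delta_{k+1}.
\end{equation*}
We then need $\|X_{k+1}-X_k\|_{\F}\le\|\Psi(X_k+\alpha_k\widehat{d}_k)-X_k\|_{\F}\lesssim\alpha_k\|\widehat{g}_k\|_{\F}+\delta_k+\alpha_k^2\|\widehat g_k\|^2$, by expanding $\Psi$ as in Case~1 of Theorem~\ref{alpha1}. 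Since $\alpha_k\|\widehat{g}_k\|_{X_k}\le t_k/2$, this gives $\|X_{k+1}-X_k\|_{\F}\lesssim t_k+\delta_k$. Unfortunately that yields $c_k\sim t_k$, which is not summable. This is the main obstacle.

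The fix is to use $\alpha_k\le t_k/(2a_k)$ more carefully: $\alpha_k\|\widehat g_k\|_{\F}\le 2\alpha_k a_k\le \min\{2\bar\alpha a_k, t_k\}$. I would split on whether $a_k\ge a_{k+1}$, which is trivial, or otherwise write $\alpha_k a_k\le \bar\alpha_k a_k\le\min(\bar\alpha,\ t_k/(2a_k))a_k\le \tfrac{t_k}{2}$ and use $t_k\le C t_k a_k+C t_k^2/ a_k$. Alternatively, I would apply Lemma~\ref{Lemma1} to $a_k^2$ instead of $a_k$, giving
\begin{equation*}
a_{k+1}^2-a_k^2\le 2a_k(a_{k+1}-a_k)+(a_{k+1}-a_k)^2\lesssim t_k a_k^2\cdot\text{const}+t_k^2+\delta_k ,
\end{equation*}
where the $(a_{k+1}-a_k)^2\lesssim t_k^2+\delta_k^2$ term and the $\delta_k$ terms are summable. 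The cross term $2a_k\cdot L_g\alpha_k\|\widehat g_k\|_{\F}\le 4L_g\alpha_k a_k^2$ is of the form $b_k\cdot a_k^2$ with $b_k=4L_g\alpha_k$. Since $\sum\alpha_k a_k^2<\infty$, this term is summable outright. So the recursion $a_{k+1}^2-a_k^2\le c_k$ with $\sum c_k<\infty$ holds. Combined with $\sum t_ka_k^2<\infty$, $\sum t_k=\infty$ and Lemma~\ref{Lemma1} (with $a_k^2$ in place of $a_k$, needing $\sum b_k a_k^4<\infty$, true since $a_k\le M_g$), this gives $a_k\to0$. I expect getting the squared recursion with summable remainder to be the delicate step. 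The $t_k^2$ term is summable only if $\sum t_k^2<\infty$, which is not assumed. So I must bound $(a_{k+1}-a_k)^2\lesssim \alpha_k^2a_k^2+\delta_k^2\le\bar\alpha\,\alpha_ka_k^2+\delta_k^2$ instead, which is summable.

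For $f(X_k)\to f(X_*)$: the sequence $f(X_k)-\sum_{j\ge k}\gamma_j$ is nonincreasing and bounded below, so $f(X_k)$ converges. Its limit must equal $f(X_*)$ by continuity along the subsequence.

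For the complexity with $t_k=2\iota M_g(k+1)^{-1/4-\ell}$, suppose $a_k>\epsilon$ for $k<K$. Then $\alpha_k a_k^2\ge\min\{\iota\epsilon^2, t_k\epsilon/2\}\ge\iota\epsilon^2(k+1)^{-1/4-\ell}$, using $\epsilon\le M_g$. Summing with $\sum_{k<K}(k+1)^{-1/4-\ell}\ge\frac{K^{3/4-\ell}-1}{3/4-\ell}$ and comparing with the telescoped bound gives the stated iteration count. The feasibility count follows from $\delta_k\le\theta_k\le\bar M t_k^4=\bar M(2\iota M_g)^4(k+1)^{-1-4\ell}$ by Lemma~\ref{B117}, solved for $k$.
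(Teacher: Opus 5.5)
Your argument is correct, but its central step takes a different route from the paper's. The paper applies Lemma~\ref{Lemma1} directly to $a_k=\|\widehat g_k\|_{\F}$ with $b_k=L_g\kappa\alpha_k$. It bounds $\|X_{k+1}-X_k\|_{\F}$ by passing through the exact retraction $R_{\dot X_k}(\alpha_k\dot d_k)$ (via \eqref{eq04}, \eqref{kappa}, \eqref{b}). This gives $a_{k+1}-a_k\le b_ka_k+c_k$ with $c_k=O(\zeta_{k+1}+\delta_k)$ summable. It then uses $\sum_k\alpha_k=\infty$ from Theorem~\ref{alpha1} and $\sum_k\alpha_ka_k^2<\infty$ from the telescoped line-search bound.

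The obstacle you hit came from choosing $b_k=c\,t_k$ and bounding $\alpha_k\|\widehat g_k\|$ by $t_k$. With $b_k\propto\alpha_k$, the term $\alpha_k\|\widehat g_k\|_{\F}$ is already of the form $b_ka_k$ and nothing is lost. Your squaring route is nevertheless sound. From $|a_{k+1}-a_k|\le C(\alpha_ka_k+\delta_k)$ you get $a_{k+1}^2-a_k^2\le C'\alpha_ka_k^2+C''\delta_k$, whose right-hand side is summable. Hence $a_k^2+\sum_{j\ge k}(C'\alpha_ja_j^2+C''\delta_j)$ is nonincreasing and $a_k^2$ converges. Since $\sum t_ka_k^2<\infty$ and $\sum t_k=\infty$, the limit is $0$. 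This actually makes Lemma~\ref{Lemma1} unnecessary. Your bound on $\|X_{k+1}-X_k\|_{\F}$ from the explicit formula for $\Psi$ also bypasses Lemma~\ref{lemmaxx} and $\kappa$; the only cost is the bookkeeping of the squared recursion.

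A few details need repair.
\begin{itemize}
\item The first ``fix'' you float, $t_k\le Ct_ka_k+Ct_k^2/a_k$, is false in general: the minimum over $a_k>0$ of $a_k+t_k/a_k$ is $2\sqrt{t_k}\to0$. Drop it.
\item The change of metric from $X_k$ to $X_{k+1}$ is not $O(\delta_k+\delta_{k+1})$ as in \eqref{eq36}, because the canonical metric varies with the base point even on $\St(p,n)$. It is $O(\|\widehat g_k\|_{\F}\|X_{k+1}-X_k\|_{\F})$, which has the same form as your other term, so $|a_{k+1}-a_k|\le C(\alpha_ka_k+\delta_k)$ survives. Alternatively, work with $\|\widehat g_k\|_{\F}$ throughout, as the paper does.
\item The nonincreasing quantity is $f(X_k)+\sum_{j\ge k}\gamma_j$, not $f(X_k)-\sum_{j\ge k}\gamma_j$.
\item The bound $\alpha_ka_k^2\ge\frac{t_k}{2M_g}\min\{\iota,1\}a_k^2$ requires $t_k\le 2M_g$. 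Use the constant $\min\{\iota/\sup_jt_j,\,1/(2M_g)\}$ instead.
\end{itemize}
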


\begin{proof}
Using the approximate line search condition (\ref{oneNSA-G}) yields
\begin{align}
\sigma \sum^{n-1}_{k=0}\alpha_k \|\widehat{g}_k\|_{X_k}^2& \leq \sum_{k=0}^{n-1}(f(X_{k})-f(X_{k+1})+\gamma_k) \leq f(X_0)- f(X_n)+\sum_{k=0}^{n-1}\gamma_k\nonumber\\
&\leq f(X_0)-f^{*}+\sum_{k=0}^{\infty}\gamma_k< \infty,\label{pppp}
\end{align}
where the last inequality follows from $\gamma_k = O(\delta_k +\min\{\theta_{k+1},(\delta_k+\Bar{\alpha}^2\|\widehat{g}_k\|_{\F}^2)^2 ) = O(\theta_k+\theta_{k+1})$ and $\sum_{k =0}^{\infty}\theta_k<\infty.$
The difference of successive gradient norms satisfies
    \begin{align}
\|\widehat{g}_{k+1}\|_{\F}-\|\widehat{g}_k\|_{\F}&\leq L_g\|X_{k+1}-X_k\|_{\F} \quad \hbox{ (by~\eqref{L1})} \nonumber\\
        &\leq L_g(\|X_{k+1}-
        R_{\dot{X}_k}(\alpha_k\dot{d}_k)\|_{\F} + \|R_{\dot{X}_k}(\alpha_k\dot{d}_k) - \dot{X}_k\|_{\F}+\|\dot{X}_k-X_k\|_{\F})\nonumber\\
        &\leq L_g\zeta_{k+1} + L_g\kappa\alpha_k\|\dot{d}_k\|_{\F} + L_g\delta_k \hbox{ (by~~\eqref{eq04},~\eqref{kappa}, and~\eqref{r7})}\label{B111} \\
        & \leq L_g\zeta_{k+1} + L_g\kappa\alpha_k\|\widehat{g}_k\|_{\F} + (\kappa\Bar{\alpha} L_g+1)L_g\delta_k, \label{B3}
    \end{align}
where the last inequality follows from $\alpha_k\leq \Bar{\alpha}$ for all $k$ and $\|\dot{d}_k\|_{\F} \leq \|\widehat{g}_k\|_{\F} +L_g\delta_k$ since $\|\widehat{g}_k-\dot{g}_k\|_{\F}\leq L_g\delta_k$ and $\dot{d}_k = -\dot{g}_k$.

Since the series $\sum_{k=0}^{\infty}\theta_k$ and $\sum_{k=0}^{\infty}\zeta_k$ converge, and inequality (\ref{pppp}) together with Theorem~\ref{alpha1} ensures that $ \sum^{\infty}_{k=0}\alpha_k \|\widehat{g}_k\|_{\F}^2\leq 4\sum^{\infty}_{k=0}\alpha_k \|\widehat{g}_k\|_{X_k}^2  <\infty$ and $\sum_{k=0}^{\infty}\alpha_k = \infty$, applying Lemma~\ref{Lemma1} to (\ref{B3}) yields that $\lim_{k\to \infty}\|\widehat{g}_k\|_{\F}= 0.$ Combing this with~\eqref{eq09}, we have $\lim_{k\to \infty}\|\widehat{g}_k\|_{X_k}= 0.$


Condition~(\ref{oneNSA-G}) can be rewritten as 
\begin{equation}\label{eq10}
f(\Psi(X_k+\alpha_k\widehat{d}_k))+u_{k+1} \leq f(X_k)+u_k +\sigma \alpha_k \left\langle\hat{g}_k,\widehat{d}_k\right\rangle_{X_k},
\end{equation}
where $u_k = \sum_{i= k}^{\infty}\gamma_k$. We have that $u_k\to 0$ as $k\to \infty$ and $u_k-u_{k+1} = \gamma_k.$ It follows from~\eqref{eq10} that $\{f(X_k)+u_k\}$ is nonincreasing. Since $\inf_{k\in\mathbb{N}}f(X_k)>-\infty$ and $u_k\to 0$, the sequence $\{f(X_k)+u_k\}$ is bounded from below, and thus is convergent. Taking into account that $u_k\to 0,$ it follows that $f(X_k)$ is convergent as well. Since $X_*$ is an accumulation point of $\{X_k\}$, the continuity of $f$ implies that $f(X_*)$  is also an accumulation point of $\{f(X_k)\},$ which yields $f(X_k)\to f(X_*)$ due to that convergence of $\{f(X_k)\}.$ 
    
 
If Algorithm \ref{alg:IRGDoneNS} executes $n-1$ iterations without termination, i.e., $\|\widehat{g}_k\|_{X_k}> \epsilon$, for all $k =0,1,...,n-1$, then inequality \eqref{pppp} yields
\begin{equation}\label{A11}
\begin{aligned}
     f(X_0)-f^*+\sum_{k=0}^{\infty}\gamma_k\geq \sigma  \sum^{n-1}_{k=0}\alpha_k\|\widehat{g}_k\|_{X_k}^2 > \sigma \epsilon^2 \sum^{n-1}_{k=0}\alpha_k. 
\end{aligned}
\end{equation}
   Combining $\|\widehat{g}_k\|_{X_k}\leq \|\widehat{g}_k\|_{\F}\leq M_g$ with Theorem~\ref{alpha1} yields $\alpha_k \geq \min\{ \Bar{\alpha},\frac{t_k}{2M_g}, \frac{2\lambda}{L^{'}}(1-\sigma)\}.$ It follows from $ t_k = \frac{2\iota M_g }{(k+1)^{\frac{1}{4}+\ell}},$ that $\alpha_k\geq \frac{ \iota }{(k+1)^{\frac{1}{4}+\ell}}.$  Therefore,  
    \begin{equation}\label{A10}
\begin{aligned}
   \frac{1}{\iota}\sum_{k=0}^{n-1}\alpha_k&\geq  \int_{0}^{n}\frac{1}{(x+1)^{\frac{1}{4}+\ell}}dx  =  \frac{1}{3/4-\ell}(n+1)^{3/4-\ell}-\frac{1}{3/4-\ell}. \\
\end{aligned}
\end{equation}
Combining \eqref{A10} and \eqref{A11} gives
$
     f(X_0)-f^*+\sum_{k=0}^{\infty}\gamma_k> \sigma \epsilon^2 \iota (\frac{1}{3/4-\ell}(n+1)^{3/4-\ell}-\frac{1}{3/4-\ell}).
$
By contradiction, the algorithm must terminate if $n \geq ((\frac{3}{4}-\ell)\frac{f(X_0)-f^*+\sum_{k=0}^{\infty}\gamma_k+\frac{\sigma \epsilon^2\iota}{3/4-\ell}}{\sigma \epsilon^2\iota})^{\frac{1}{3/4-\ell}}.$ 
By Lemma~\ref{B117}, there exists a constant $\Bar{M}$ such that  $\delta_k\leq \theta_k \leq \Bar{M}t_k^4 = \frac{\Bar{M}(2\iota M_g )^4 }{(k+1)^{1+4\ell}} .$ Therefore, $\delta_k \leq \epsilon$ if $n\geq (\frac{\Bar{M}(2\iota M_g )^4}{\epsilon})^{\frac{1}{1+4\ell}}-1.$

\end{proof}
\subsection{Local Convergence Rate Analysis}\label{rate}
The local convergence rates analysis of Algorithm~\ref{alg:IRGDoneNS} relies on Assumption~\ref{PL}. 
\begin{Assumption}\label{PL}
    (Local Riemannian P$\text{\L}$ Condition~\cite{SYB2024Local}) The function $f: \mathbb{R}^{n\times p}\to \mathbb{R}$ satisfies the local
Riemannian Polyak-$\text{\L}$ojasiewicz (P$\text{\L}$) condition on the Stiefel
manifold with a factor $\mu>0$ if 
$$|f(X)-f^{*}_{\mathcal{S}}|\leq \frac{1}{2\mu}\|\grad(X)\|_{\F}^2,$$
for any point $X\in \St(p,n)\cap \mathcal{D}(\mathcal{S},2r)$, where $r>0 $ is a constant, $\mathcal{S}$ denotes the set
of all local minimizers of $f$ over $\St(p, n)$ with a given value $f^{*}_{\mathcal{S}}$ and $\mathcal{D}(\mathcal{S},2r) =\{X \in \mathbb{R}^{n \times p} \mid \dist(\mathcal{S},X)\leq 2r\},$ with $\dist(\mathcal{S},X) = \min_{Y\in\mathcal{S}}\|X-Y\|_{\F}.$
\end{Assumption}

The local Riemannian P$\text{\L}$ condition is a relaxation of
the global Riemannian P$\text{\L}$ condition. This assumption is easier to satisfy compared to the
geodesic strong convexity. Many traditional tasks involving the
Stiefel manifold, such as the PCA problem and the generalized
quadratic problem, satisfy the local Riemannian P$\text{\L}$ condition but do not meet global Riemannian P$\text{\L}$ condition and geodesic strong convexity~\cite{SYB2024Local,liu2019quadratic}. 
Theorem~\ref{conge rate} shows that the accepted step sizes eventually have a uniform positive lower bound.
\begin{theorem}\label{conge rate}
    Let $\{X_k\}$ be the sequence generated by Algorithm \ref{alg:IRGDoneNS}. Suppose that Assumptions \ref{assumgradL},~\ref{assum1} \ref{assumgamma} and \ref{PL}  hold, that all accumulation points of $\{X_k\}$ are local minimizers, and that $t_k = \Theta(\frac{1}{k^a})$, for $a \in (\frac{1}{4}, \frac{1}{2})$. Then the step size sequence $\{\alpha_k\}$ is bounded below by a positive constant for all sufficiently large $k$,
   i.e. there exist $\alpha^{'}>0$ and $K>0$ such that $\alpha_k\geq\alpha^{'} \text{ for all } k\geq K.$
\end{theorem}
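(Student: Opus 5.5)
By Theorem~\ref{alpha1}, for every $k$ we have $\alpha_k\geq \min\{\Bar{\alpha},\frac{t_k}{2\|\widehat{g}_k\|_{X_k}},\frac{2\lambda}{L^{'}}(1-\sigma)\}$. The first and third entries are fixed positive constants. The whole task is therefore to show that $\frac{t_k}{\|\widehat{g}_k\|_{X_k}}$ stays bounded away from zero for large $k$. Equivalently, we need $\|\widehat{g}_k\|_{X_k}=O(t_k)=O(k^{-a})$. Once this holds, the claim follows with $\alpha' = \min\{\Bar{\alpha},\frac{1}{2C},\frac{2\lambda}{L'}(1-\sigma)\}$, where $C$ is the constant in $\|\widehat{g}_k\|_{X_k}\le C t_k$.

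\textbf{Step 1: enter the P\L{} region.} Theorem~\ref{IRGDconveroneNS} gives $\|\widehat{g}_k\|\to0$, and $f(X_k)\to f(X_*)$ for every accumulation point. All accumulation points are local minimizers, and $\delta_k\to 0$ by Lemma~\ref{B117}. Hence, for large $k$, the projected points $\dot X_k$ (and $\P_{\St}(X_k)$) lie in $\St(p,n)\cap\mathcal{D}(\mathcal{S},2r)$, and $f^*_{\mathcal{S}}$ is the limit value of $f(X_k)$. For this I will use compactness together with the assumption that accumulation points belong to $\mathcal{S}$, at a distance less than $r$ eventually.

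\textbf{Step 2: derive a perturbed linear recursion for the optimality gap.} Set $e_k=f(X_k)-f^*_{\mathcal{S}}+$ (a correction of order $\delta_k$). I would first show that the test step is accepted whenever $\alpha\le\frac{2\lambda}{L'}(1-\sigma)$, which is the same computation as in Theorem~\ref{alpha1}. Then I would use the line-search inequality~\eqref{oneNSA-G} together with the P\L{} inequality at $\dot X_k$, the estimate~\eqref{g-g} to pass from $\dot g_k$ to $\widehat g_k$, and $|f(X_k)-f(\dot X_k)|\le L\delta_k$. Together these should give
\[ e_{k+1}\le (1-2\sigma\mu\alpha_k/4)e_k + c(\gamma_k+\delta_k). \]
Here $\gamma_k+\delta_k=O(\theta_k)=O(t_k^4)=O(k^{-4a})$.

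\textbf{Step 3: bootstrap.} This is the main obstacle, because $\alpha_k$ itself may be as small as $t_k/(2\|\widehat g_k\|)$, which makes the argument circular. I would argue by cases at each $k$. If $\alpha_k\ge\iota$, the contraction factor is a fixed $\rho<1$. Otherwise $\alpha_k\ge t_k/(2\|\widehat g_k\|)$, and then the decrease $\sigma\alpha_k\|\widehat g_k\|^2\ge \frac{\sigma}{2}t_k\|\widehat g_k\|\ge c\,t_k\sqrt{e_k}$, again by P\L{}. Together these yield
\[ e_{k+1}\le e_k-c\min\{e_k,\,t_k\sqrt{e_k}\}+O(t_k^4). \]
A comparison (induction) argument should then show $e_k=O(t_k^2)$. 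Indeed, if $e_k\ge Mt_k^2$, then $t_k\sqrt{e_k}\ge \sqrt{M}t_k^2$, and the decrease dominates both the $O(t_k^4)$ error and the drift $t_k^2-t_{k+1}^2=O(t_k^2/k)$, using $\lim t_k/t_{k-1}=1$. This is where $a<\frac12$ matters: it guarantees $t_k^2/k\ll t_k^3$, i.e.\ $1/k\ll k^{-a}$.

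\textbf{Step 4: convert back to the gradient.} From $e_k=O(t_k^2)$, I would bound the gradient through one step of the $L'$-retraction-smoothness~\eqref{Lrgd}: $\|\grad f(\dot X_k)\|^2\le 2L'(f(\dot X_k)-f^*_{\mathcal S})$, since $f(R(-\grad f/L'))\ge f^*_{\mathcal{S}}$ locally. Adding $L_g\delta_k$ then gives $\|\widehat g_k\|=O(t_k)$, which is exactly the bound needed for $\alpha'$.
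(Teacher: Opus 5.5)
Your Steps 1--3 track the paper's proof closely. The paper also places $\dot X_k$ in $\mathcal D(\mathcal S,2r)$ by compactness and proves $f(X_k)-f(X_*)\le Mk^{-2a}=O(t_k^2)$ by induction. After separating off the case $f(X_k)-f(X_*)\le Tk^{-4a}$, it splits according to which term of $\min\{\bar\alpha,\frac{t_k}{2\|\widehat g_k\|_{X_k}},\frac{2\lambda}{L'}(1-\sigma)\}$ is active. In the second case it uses $\sigma\alpha_k\|\widehat g_k\|_{X_k}^2\ge\frac{\sigma}{2}t_k\|\widehat g_k\|_{X_k}$ together with P\L{}, exactly as you propose.

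The genuine difference is your Step 4. The paper uses the sufficient-decrease inequality (with $f(\dot X_{k+1})\ge f(X_*)$) to get $\alpha_k\|\widehat g_k\|_{X_k}^2\le M'k^{-2a}$, and then argues by contradiction. Along a subsequence with $t_k/\|\widehat g_k\|_{X_k}\to0$ the initial trial step is accepted, so $\alpha_k=t_k/(2\|\widehat g_k\|_{X_k})$. Then $\alpha_k=t_k^2/(4\alpha_k\|\widehat g_k\|_{X_k}^2)\ge (T')^2/(4M')$, contradicting $\alpha_k\to0$. You instead apply \eqref{Lrgd} at $\dot X_k$ with $V=-\grad f(\dot X_k)/L'$. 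This gives $\|\grad f(\dot X_k)\|_{\dot X_k}^2\le 2L'(f(\dot X_k)-f^*_{\mathcal S})$, hence $\|\widehat g_k\|_{X_k}=O(t_k)$ directly. Your route is shorter, gives $\alpha'$ explicitly, and avoids the subsequence argument. It rests on the same local fact the paper uses, namely $f\ge f^*_{\mathcal S}$ on $\St(p,n)$ near $\mathcal S$. You only evaluate it at the auxiliary point $R_{\dot X_k}(V)$, which lies within $\kappa\|V\|_{\F}\to0$ of $\dot X_k$.

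Step 3 needs two repairs.
\begin{enumerate}
\item Your drift estimate $t_k^2-t_{k+1}^2=O(t_k^2/k)$ does not follow from $t_k/t_{k-1}\to1$ and $t_k=\Theta(k^{-a})$. Consecutive $t_k$ may differ by a factor $1+1/\log k$. You do not need it, though. Either compare against $Mk^{-2a}$ as the paper does, or observe that in the band $e_k\ge\frac M2t_k^2$ the decrease is a fixed multiple of $t_k^2$, so an $o(t_k^2)$ drift suffices.
\item You should say how the induction starts. While $e_k>Mt_k^2$ (with $M\ge1$) you get $\sqrt{e_{k+1}}\le\sqrt{e_k}-ct_k/4$ for a suitable $c>0$. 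This cannot persist because $\sum t_k=\infty$.
\end{enumerate}

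Your account of where $a<\frac12$ enters is also off, since $1/k\ll k^{-a}$ holds for every $a<1$. In the paper, $a<\frac12$ is what lets the Case 2.1 decrease $\frac12\sigma\mu\underline{\alpha}Tk^{-4a}$ beat the drift $2aMk^{-2a-1}$. Your version of the comparison does not appear to need it.
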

\begin{proof}
Since all accumulation points of $\{X_k\}$ share the same objective
value by Theorem~\ref{IRGDconveroneNS}, we select
$f_{\mathcal S}^{*}:=f(X_*)$ and $X_*$ is an accumulation point of $\{X_k\}.$ 
Moreover, by \eqref{b} and the fact that $\delta_k \to 0$, any accumulation point of $\{X_k\}$ is also an accumulation point of $\{\dot{X}_k\}$. We first show that the sequence
$\{\dot X_k\}$ eventually enters and remains in the neighborhood
$\mathcal D(\mathcal S,2r)$, so that the local Riemannian  P$\text{\L}$ condition becomes
applicable.
Suppose that infinitely many iterates lie outside
$\mathcal D(\mathcal S,2r)$. Then there exists a subsequence
$\{\dot X_{k_j}\}$ satisfying $\dist(\dot X_{k_j},\mathcal S)>2r.$ Since $\{\dot{X}_{k_j}\}$ is bounded, there exists a convergent subsequence $\{\dot{X}_{k_{j_l}}
\}$, denote its limit as $ X^{'}.$ 
Therefore, $X^{'}\in \mathcal{S}$ and $\dist (\mathcal{S}, X_{k_{j_l}})\leq\dist (X^{'}, X_{k_{j_l}}) \leq r$ for all sufficiently large $l$, which is a contradiction to $\dist(\mathcal{S},\dot{X}_{k_j})>2r.$
Therefore, the local Riemannian P$\text{\L}$ inequality holds for all
sufficiently large $k$, namely,
\begin{align}
f(\dot{X}_{k}) - f(X_*)& \le \frac{1}{2\mu}\|\dot{g}_{k}\|_{\F}^2,\label{s2_new}
\end{align}
for all sufficiently large $k$. Similar to~\eqref{w3}, $\|\dot{g}_k\|_{\F}^2\leq \|\widehat{g}_k\|_{\F}^2 + L-g\delta_k\|\widehat{g}_k\|_{\F}^2+L_g\delta_k\|\dot{g}_k\|^2_{\F}.$
Inserting this into~\eqref{s2_new}, we obtain
\begin{align}
&f(\dot{X}_{k}) - f(X_*)\leq \frac{1}{2\mu} (\|\widehat{g}_k\|_{\F}^2+L_g\delta_k\|\widehat{g}_k\|_{\F}+L_g\delta_k\|\dot{g}_k\|_{\F})\nonumber\\
\implies & \|\widehat{g}_k\|_{\F}^2 \geq 2\mu(f(\dot{X}_{k}) - f(X_*)) - L_g\delta_k\|\widehat{g}_k\|_{\F}-L_g\delta_k\|\dot{g}_k\|_{\F}.\label{eq11}
\end{align}
Combing~\eqref{eq09} and~\eqref{eq11} yields
\begin{align}
\|\widehat{g}_k\|_{X_k}^2\geq\frac{1}{4}\|\widehat{g}_k\|_{\F}^2 \geq\frac{1}{4}(2\mu(f(\dot{X}_k) - f(X_*)) - L_g\delta_k\|\widehat{g}_k\|_{\F} - L_g\delta_k\|\dot{g}_k\|_{\F}).\label{y1_new}
\end{align}
Inserting~\eqref{y1_new} into~\eqref{oneNSA-G} yields
\begin{align}
    f(X_{k+1}) - f(X_k)
    &\le -\frac{1}{2}\sigma\mu \alpha_k (f(\dot{X}_k) - f(X_*)) + \frac{1}{4}\sigma \alpha_k \Big(L_g\delta_k\|\widehat{g}_k\|_{\F} + L_g\delta_k\|\dot{g}_k\|_{\F} \Big) + \gamma_k\label{y4}\\
    &\le -\frac{1}{2}\sigma\mu\alpha_k (f(\dot{X}_k) - f(X_*)) + C\delta_k + \gamma_k
     \le  C\delta_k + \gamma_k,\label{d2b_new}
\end{align}
where the second inequality follows from that $C>0$ is a constant such that $\frac{1}{4}\sigma\alpha_k( L_g\delta_k\|\hat{g}_k\|_{\F} + L_g\delta_k\|\dot{g}_k\|_{\F} +2\mu L\delta_k)\leq C \delta_k$ and the last inequality follows from  $X_*$ is a local minimizer.

Next, since $t_k = \Theta(k^{-a})$ with $\frac{1}{4} < a < \frac{1}{2}$, there exist constants $T',T'',Q, E>0$  such that
\begin{align}
    T' k^{-a} \le &t_k \le T'' k^{-a}, \label{eq06} \\
    &\delta_k \leq \theta_k =  O(t_k^4)\leq  Q k^{-4a}\text{ (by Lemma~\ref{B117}) }, \label{w6}\\
    &\gamma_k =  O(\theta_k+\theta_{k+1}) \le E k^{-4a} \text{ (by Assumption~\ref{assumgamma})},\label{w5}
\end{align}
for all sufficiently large $k$. The bounds
$\delta_k=O(k^{-4a})$
and
$\gamma_k=O(k^{-4a})$
suggest that the perturbation terms decay faster than
$k^{-2a}$.
Motivated by this observation, we next establish the estimation
\[
f(X_k)-f(X_*)
=O(k^{-2a}),
\]
which is sufficient for proving the existence of a positive lower
bound on the accepted step sizes. We now prove by induction that there exists $M>0$ such that
\begin{align}\label{p0}
    f(X_k) - f(X_*) \le M k^{-2a},
\end{align}
for all sufficiently large $k$.

The claim holds for some sufficiently large $k_0$ by choosing $M$ large enough. Assume it holds for some $k \ge k_0$. Denote $T = \frac{M}{2^{2a}} - CQ - E > 0$ and then consider two cases.

\medskip
\noindent
\textbf{Case 1:}  Suppose that $f(X_k) - f(X_*) \le T k^{-4a}$.
Then, from \eqref{d2b_new}, for sufficiently large $k$, 
\begin{align}
    f(X_{k+1}) - f(X_*)
&\leq f(X_{k}) - f(X_*) +C\delta_k+\gamma_k \nonumber\\
&\leq (T + CQ + E) k^{-4a} \text{ (by the inductive hypothesis, ~\eqref{w6}, and~\eqref{w5})}\nonumber\\
&= M \left(2 k^2\right)^{-2a} 
\le M (k+1)^{-2a}. \text{(since $2k^2\geq k+1$ holds for $k\geq1$) }\nonumber
\end{align}

\medskip
\noindent
\textbf{Case 2:} Suppose instead that $f(X_k) - f(X_*) \ge T k^{-4a}$.
From~\eqref{y4},~\eqref{b}, and $f$ is Lipschitz continuous with constant $L$, we have
\begin{align}
    f(X_{k+1}) - f(X_k)&\leq  -\frac{1}{2}\sigma\mu \alpha_k (f(X_k) - f(X_*)) + \frac{1}{4}\sigma \alpha_k \Big(L_g\delta_k\|\widehat{g}_k\|_{\F} + L_g\delta_k\|\dot{g}_k\|_{\F} + 2\mu L\delta_k \Big) + \gamma_k\nonumber\\
    &\le -\frac{1}{2}\sigma\mu \underline{\alpha}_k T k^{-4a} + C\delta_k + \gamma_k,\label{d2b}
\end{align}
where $\underline{\alpha}_k := \min\{\Bar{\alpha}, \frac{t_k}{2\|\widehat{g}_k\|_{X_k}}, \frac{2\lambda}{L'}(1-\sigma)\}$.

\medskip
\noindent
\textbf{Case 2.1:} $\underline{\alpha}_k = \underline{\alpha} := \min\{\Bar{\alpha}, \frac{2\lambda}{L'}(1-\sigma)\}$.
Combining~\eqref{w6},~\eqref{w5},~\eqref{p0}, and~\eqref{d2b} yields
\begin{align}\label{d6}
    f(X_{k+1}) - f(X_*)
\le M k^{-2a} - \frac{1}{2}\sigma\mu \underline{\alpha} T k^{-4a} + (CQ + E) k^{-4a}.
\end{align}
We can adjust the parameter $M$ and let $K$ be sufficiently large such that for any $k\geq K$ we have
\begin{align}
&\frac{1}{2^{2a}}\geq \frac{2a}{\sigma\mu \underline{\alpha}k^{1-2a}}+\frac{(1+\frac{1}{2}\sigma \mu \underline{\alpha} )(CQ+E)}{\frac{1}{2}\sigma  \mu \underline{\alpha} M}\implies \frac{1}{2}\sigma \mu \underline{\alpha} T\geq 2aM\frac{1}{k^{1-2a}} + CQ + E,\nonumber
\end{align}
where ``$\implies$'' follows from $T=\frac{M}{2^{2a}} - CQ - E$.
Inserting this into \eqref{d6} yields that for a sufficiently large $k$,
\begin{equation*}
    \begin{aligned}
         f(X_{k+1})-f(X_*)&\leq M\frac{1}{k^{2a}}(1-\frac{2a}{k})\leq M(k+1)^{-2a}, 
    \end{aligned}
\end{equation*}
where the last inequality follows from $(k+1)^{-2a} = k^{-2a}(1+\frac{1}{k})^{-2a}\geq k^{-2a}(1-\frac{2a}{k}) $ (by Bernoulli's inequality).

\medskip
\noindent
\textbf{Case 2.2:} $\underline{\alpha}_k = \frac{t_k}{2\|\widehat{g}_k\|_{X_k}}$.
From \eqref{oneNSA-G} and hypothesis of induction, we have
    \begin{align}
        f(X_{k+1}) - f(X_*)&\leq M\frac{1}{k^{2a}} -\frac{1}{2}\sigma t_k\|\widehat{g}_k\|_{X_k}+\gamma_k\nonumber\\
        &\leq M\frac{1}{k^{2a}} -\frac{1}{4}\sigma t_k\|\widehat{g}_k\|_{\F}+\gamma_k \text{ (by~\eqref{eq09})}\nonumber\\
        & \leq M\frac{1}{k^{2a}} -\frac{1}{4}\sigma t_k\|\dot{g}_k\|_{\F}+\frac{1}{4}L_g\delta_k \sigma t_k+\gamma_k \; (\text{by~\eqref{g-g}} )\nonumber\\
        & \leq M\frac{1}{k^{2a}} -\frac{1}{4}\sigma t_k\sqrt{2\mu(f(X_k)- f(X_*)-L\delta_k)}+\frac{1}{4}L_g\delta_k\sigma t_k+\gamma_k\nonumber\\
        & \leq M\frac{1}{k^{2a}} -\frac{1}{4}\sigma T^{'}\sqrt{2\mu (T-LQ)}\frac{1}{k^{2a}}+\frac{1}{4}L_gQT^{''}\frac{1}{k^{4a}}+E\frac{1}{k^{4a}},\label{y3}
    \end{align}
    where the fourth inequality follows from~\eqref{s2_new}, (\ref{b}), and $f$ is Lipschitz continuous with constant $L$.
Let the parameters $M$ and $K$ be adjusted sufficiently large satisfying $M \leq K$ such that for any $k\geq K$ we have that
\begin{align}
    &\frac{1}{2^{2a}} \geq\frac{1}{2\mu(\frac{1}{4}\sigma T^{'})^2}  \big(\frac{(2a)^2M}{k^2} + \frac{(E+\frac{1}{4}L_g Q T^{''})^2}{Mk^{4a}} + \frac{4a(E+\frac{1}{4}L_g Q T^{''})}{k^{2a+1}}\big) +\frac{LQ+CQ+E}{M},\nonumber\\
    \implies &\frac{1}{4}\sigma T^{'}\sqrt{2\mu(T-LQ)}\geq 2aM\frac{1}{k} + (E+\frac{1}{4}L_g Q T^{''}) \frac{1}{k^{2a}}.\nonumber
\end{align}
Inserting this into (\ref{y3}) yields that for sufficiently large $k$,
\begin{equation*}
    \begin{aligned}
        f(X_{k+1}) - f(X_*)&\leq M\frac{1}{k^{2a}} (1-\frac{2a}{k})\leq M(k+1)^{-2a}.
    \end{aligned}
\end{equation*}
Thus, the induction holds.

\medskip
We are now ready to show that $\{\alpha_k\}$ are bounded below by a positive constant for all sufficiently large $k$. From \eqref{oneNSA-G}, we obtain
\begin{align}
    \sigma \alpha_k\|\hat{g}_k\|_{X_k}^2&\leq f(X_k)-f(X_*)-(f(X_{k+1})-f(X_*)) + \gamma_k \nonumber\\
        &\leq f(X_k)-f(X_*)-(f(\dot{X}_{k+1})-f(X_*)) +L\delta_{k+1}+ \gamma_k \nonumber\\
        &\leq f(X_k)-f(X_*)+L\delta_{k+1} +\gamma_k\nonumber\\
         &\leq f(X_k)-f(X_*)+2\gamma_k, \text{ (by Assumption~\ref{assumgamma})}\label{eq16}
\end{align}
     where the second inequality follows from Lemma~\ref{lemmaxx} and  $f$ is Lipschitz continuous with constant $L$ and the third inequality follows from $X_*$ is a local minimizer point. 
     Combing~\eqref{eq16} with~\eqref{p0},~\eqref{w5}, we have that there exist $M'>0$  such that for all sufficiently large $k,$
\begin{equation} \label{eq05}
\alpha_k \|\widehat{g}_k\|_{X_k}^2 \le M' k^{-2a}, \quad \forall k \ge K_1.
\end{equation}

Suppose, by contradiction, that $\liminf_{k\to\infty} \frac{t_k}{\|\widehat{g}_k\|_{X_k}} = 0.$
 Then there exists a subsequence $\mathcal{K}$ such that $\frac{t_k}{\|\widehat{g}_k\|_{X_k}} \to 0$, for $k \in \mathcal{K}$ and $k \to \infty$.
For a sufficiently large $k \in \mathcal{K}$, we have $\alpha_k = \frac{t_k}{2\|\widehat{g}_k\|_{X_k}}$, and hence
\[
\alpha_k = \frac{t_k^2}{4\alpha_k \|\widehat{g}_k\|_{X_k}^2}
\ge \frac{(T')^2}{4M'} > 0,
\]
where the first inequality follows from~\eqref{eq06} and~\eqref{eq05}, which yields a contradiction.

Therefore, the accepted step sizes are eventually bounded away from
zero. There exists a positive constant $\nu$ such that $\{\frac{t_k}{\|\widehat{g}_k\|_{X_k}}\}$ is bounded below $\nu$ for all sufficiently large $k$. By Theorem~\ref{alpha1}, it follows that
$
\alpha_k \ge \min\{\Bar{\alpha}, \frac{1}{2}\nu, \tfrac{2\lambda}{L'}(1-\sigma)\} > 0
$
for all sufficiently large $k$.
\end{proof} 
The local convergence rate of Algorithm~\ref{alg:IRGDoneNS} is established in Theorem~\ref{linear}.
\begin{theorem}\label{linear}
    Let $\{X_k\}$ be the sequence generated by Algorithm \ref{alg:IRGDoneNS}. Suppose that Assumptions \ref{assumgradL},~\ref{assum1} \ref{assumgamma} and \ref{PL}  hold, that all accumulation points of $\{X_k\}$ are local minimizers, that $t_k = \Theta(\frac{1}{k^a})$, for $a \in (\frac{1}{4}, \frac{1}{2})$, and that $0<\sigma<\min\{\frac{1}{\mu\alpha^{'}},1\}$, where $\alpha'$ is defined in Theorem~\ref{conge rate}. Then, there exists $K_1$ such that for all $k\geq K_1$,
    \begin{align}
    f(X_{k+1}) - f(X_*) & \leq \rho^{k-K_1}C_{K_1}, \delta_{k+1}\leq \rho^{k-K_1}\frac{C_{K_1}}{\rho^{'}} \hbox{ and }\nonumber\\
    \dist(X_{k+1},\mathcal{S} ) &\leq \sqrt{\rho}^{k-K_1} (\sqrt{\frac{2}{\mu}(1+\frac{L}{\rho^{'}}) C_{K_1} }+ \frac{1}{\rho^{'}}C_{K_1}),
    \end{align}
    where $X_*$ is an accumulation point of $\{X_k\}$, $\rho,\rho^{'},Q^{'},C^{'}$, and $C_{K_1}$ are positive constants satisfying $\rho \in (1-\mu\sigma\alpha^{'},1), \rho^{'}>C^{'}/\rho,$ $\gamma_k \leq  Q^{'}(\delta_k+(\delta_k+\Bar{\alpha}^2\|\widehat{g}_k\|_{X_k}^2)^2)$, $ C^{'}=
    L+\sigma \Bar{\alpha} L_gM_g +L(1-\mu\sigma\alpha^{'})+Q^{'},$ and $C_{K_1} = f(X_{K_1})-f(X_*) + \rho^{'}\delta_{K_1}.$  
\end{theorem}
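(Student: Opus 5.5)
We plan to run a Lyapunov-type argument on the merit quantity $\Phi_k := f(X_k)-f(X_*)+\rho^{'}\delta_k$. The target is a one-step contraction $\Phi_{k+1}\le \rho\,\Phi_k$ for all $k\ge K_1$, where $K_1$ is chosen so that three things hold simultaneously. First, Theorem~\ref{conge rate} gives $\alpha_k\ge\alpha^{'}$. Second, the local Riemannian P\L{} inequality \eqref{s2_new} applies to $\dot X_k$. Third, $\delta_k$, $\|\widehat g_k\|_{\F}$ and $\alpha_k$ are small enough that the perturbation constants below are valid; this uses Theorem~\ref{IRGDconveroneNS}, Lemma~\ref{B117} and $\alpha_k\le\bar\alpha$. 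Iterating the contraction gives $\Phi_{k+1}\le\rho^{k+1-K_1}C_{K_1}\le\rho^{k-K_1}C_{K_1}$. The first two claims then follow from $f(X_{k+1})-f(X_*)\ge -L\delta_{k+1}$ (via \eqref{b}, Lipschitz continuity, and the fact that $X_*$ is a local minimizer, with $\dot X_{k+1}$ near $\mathcal S$) and $\rho^{'}\delta_{k+1}\le\Phi_{k+1}+L\delta_{k+1}$; alternatively, we absorb this by bounding $\Phi$ from below suitably.

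\textbf{Decrease of $f$.} From \eqref{oneNSA-G} with $\alpha_k\ge\alpha^{'}$ we have $f(X_{k+1})\le f(X_k)-\sigma\alpha^{'}\|\widehat g_k\|_{X_k}^2+\gamma_k$. We then lower bound $\|\widehat g_k\|_{X_k}^2$ via P\L{} at $\dot X_k$, following \eqref{y1_new}, but more carefully so that the constant $\mu$ rather than $\mu/4$ survives, matching $\rho>1-\mu\sigma\alpha^{'}$. We use $\|\cdot\|_{X_k}$ versus $\|\cdot\|_{\dot X_k}$ through \eqref{eq36}, the metric on $\St(p,n)$ dominating $\frac12\|\cdot\|_{\F}^2$ for tangent vectors, and \eqref{g-g}. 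This gives $\|\widehat g_k\|_{X_k}^2\ge \mu(f(\dot X_k)-f(X_*)) - c\,\delta_k$ with $c$ of size $L_gM_g+\dots$. We then replace $f(\dot X_k)$ by $f(X_k)-L\delta_k$. The result is
\[
f(X_{k+1})-f(X_*)\le (1-\mu\sigma\alpha^{'})(f(X_k)-f(X_*)) + \bigl(L(1-\mu\sigma\alpha^{'})+\sigma\bar\alpha L_gM_g\bigr)\delta_k+\gamma_k .
\]
Next we bound $\gamma_k\le Q^{'}(\delta_k+(\delta_k+\bar\alpha^2\|\widehat g_k\|_{X_k}^2)^2)$. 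The key point is that the squared term is quadratic: $(\delta_k+\bar\alpha^2\|\widehat g_k\|^2)^2\le 2\delta_k^2+2\bar\alpha^4\|\widehat g_k\|^4$. The $\|\widehat g_k\|^4$ part is $o(\|\widehat g_k\|^2)$, so it can be absorbed into the descent term $-\sigma\alpha^{'}\|\widehat g_k\|^2$ for large $k$, costing only an arbitrarily small change in the contraction factor. This is why $\rho$ ranges over an open interval above $1-\mu\sigma\alpha^{'}$.

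\textbf{Feasibility recursion.} We need $\delta_{k+1}$ controlled by $\Phi_k$, not merely by $\theta_{k+1}$. From the one-step Newton--Schulz identity, $I-\Psi(Z)^T\Psi(Z)=\frac14(I-Z^TZ)^2(3I-Z^TZ)$, so $\delta_{k+1}\le c\,\|I-Z_k^TZ_k\|_{\F}^2$ with $Z_k=X_k+\alpha_k\widehat d_k$. Since $\widehat d_k$ is skew-times-$X_k$, we have $Z_k^TZ_k=X_k^TX_k+\alpha_k^2\widehat d_k^T\widehat d_k+O(\delta_k\alpha_k\|\widehat g_k\|)$. Hence $\delta_{k+1}\le c(\delta_k+\bar\alpha^2\|\widehat g_k\|_{\F}^2)^2$. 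This is quadratically small: $\le \epsilon\delta_k+\epsilon\|\widehat g_k\|^2$ for large $k$. The $\|\widehat g_k\|^2$ part is again absorbed by the descent term.

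\textbf{Combination.} Adding $\rho^{'}\delta_{k+1}$ to the $f$-recursion gives
\[
\Phi_{k+1}\le(1-\mu\sigma\alpha^{'}+\epsilon)(f(X_k)-f(X_*))+ C^{'}\delta_k+\epsilon\rho^{'}\delta_k .
\]
The condition $\rho^{'}>C^{'}/\rho$ then yields $C^{'}\delta_k\le\rho\rho^{'}\delta_k$ up to the small slack, so $\Phi_{k+1}\le\rho\Phi_k$. One subtlety is that the gradient term must be absorbed before applying P\L{}: we split $\sigma\alpha^{'}\|\widehat g_k\|^2$ into a piece used for P\L{} and a tiny piece used for absorption. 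The hypothesis $\sigma<1/(\mu\alpha^{'})$ guarantees $1-\mu\sigma\alpha^{'}>0$.

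\textbf{Distance to $\mathcal S$.} We write $\dist(X_{k+1},\mathcal S)\le\delta_{k+1}+\dist(\dot X_{k+1},\mathcal S)$. For the second term we use the quadratic growth implied by the local P\L{} condition on the manifold, $\dist(\dot X,\mathcal S)^2\le\frac{2}{\mu}(f(\dot X)-f(X_*))$, a standard consequence of P\L{} whose constant we accept. Then $f(\dot X_{k+1})-f(X_*)\le\Phi_{k+1}+L\delta_{k+1}\le(1+L/\rho^{'})\rho^{k-K_1}C_{K_1}$, which gives the stated $\sqrt\rho^{\,k-K_1}$ bound.

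The main obstacle is the feasibility recursion: showing $\delta_{k+1}$ is bounded by a quantity that contracts with $\Phi_k$, rather than by the sublinear $\theta_{k+1}$. It relies on the quadratic Newton--Schulz error together with the near-tangency of $\widehat d_k$.
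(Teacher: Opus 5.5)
Your proposal is correct. It shares the paper's skeleton:
\begin{itemize}
\item the merit quantity $\Phi_k=f(X_k)-f(X_*)+\rho'\delta_k$;
\item the feasibility bound $\delta_{k+1}\le(\delta_k+\bar\alpha^2\|\widehat g_k\|_{\F}^2)^2$;
\item a one-step contraction;
\item the quadratic-growth step for $\dist(X_{k+1},\mathcal S)$.
\end{itemize}

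The genuine difference is how the higher-order perturbations are controlled. The paper first derives an \emph{upper} bound on the gradient, $\|\widehat g_k\|_{\F}^2\lesssim f(X_k)-f(X_*)+\delta_k$, from the line-search inequality \eqref{eq16}, absorbing a quartic term in \eqref{eq17}. All perturbations then collapse into $(Q'+\rho')C_0(f(X_k)-f(X_*)+\delta_k)^2=o(\Phi_k)$, which is dominated via \eqref{eq32}. You instead reserve a sliver $\eta\sigma\alpha'\|\widehat g_k\|_{X_k}^2$ of the descent term. It absorbs the $\|\widehat g_k\|^4$ contributions from $\gamma_k$ and from $\rho'\delta_{k+1}$, using only $\|\widehat g_k\|\to0$, and you apply P\L{} to the remaining $(1-\eta)$ part. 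This is shorter and avoids \eqref{eq16}--\eqref{eq18} altogether. You do need to fix $\eta$ and $\rho'$ before choosing $K_1$, since the absorbed term scales with $\rho'$.

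Your P\L{} step is also sharper than the paper's. You pass through $\|\dot g_k\|_{\dot X_k}^2\ge\frac12\|\dot g_k\|_{\F}^2$ (valid because $\dot X_k\in\St(p,n)$), the two-sided version of \eqref{eq36}, and \eqref{w3}. This gives $\|\widehat g_k\|_{X_k}^2\ge\mu(f(\dot X_k)-f(X_*))-c\,\delta_k$. By contrast, \eqref{eq09} alone only yields $\|\widehat g_k\|_{X_k}^2\ge\frac14\|\widehat g_k\|_{\F}^2$ (the paper writes $\frac12$), which would give the factor $1-\frac12\mu\sigma\alpha'$. Your route is what actually supports the stated $\rho>1-\mu\sigma\alpha'$.

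A few small points need fixing.
\begin{itemize}
\item The feasibility recursion is not a real obstacle. Since $\widehat d_k=-\Omega_kX_k$ with $\Omega_k=\operatorname{skew}(\nabla f(X_k)X_k^T)$ skew-symmetric, the cross term in $Z_k^TZ_k$ vanishes exactly, so $Z_k^TZ_k=X_k^TX_k+\alpha_k^2\widehat g_k^T\widehat g_k$.
\item The one-step identity is $I-\Psi(Z)^T\Psi(Z)=\frac14(I-Z^TZ)^2(4I-Z^TZ)$, not with $3I$. Together with the previous point, this gives precisely the bound already used in \eqref{r2}.
\item Your displayed decrease inequality drops one $L\delta_k$, coming from $f(X_k)\le f(\dot X_k)+L\delta_k$. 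This is why $C'$ contains $L$ twice.
\item Only $f(X_k)-f(X_*)\ge-L\delta_k$ is known, so replacing the coefficient $1-\mu\sigma\alpha'+\epsilon$ by $\rho$ needs care. The cleanest fix is to contract $f(\dot X_k)-f(X_*)\ge0$ and use $\Phi_k\ge f(\dot X_k)-f(X_*)+(\rho'-L)\delta_k$, at the cost of taking $\rho'$ somewhat larger.
\item Likewise, your bound $(\rho'-L)\delta_{k+1}\le\rho^{k+1-K_1}C_{K_1}$ gives the stated $\delta_{k+1}\le\rho^{k-K_1}C_{K_1}/\rho'$ only once also $\rho'(1-\rho)\ge L$.
\end{itemize}
The paper's \eqref{eq32} and its ``Consequently'' step tacitly assume $f(X_k)\ge f(X_*)$, so the same enlargement of $\rho'$ is needed there as well.
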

\begin{proof}

Combining~\eqref{oneNSA-G},~\eqref{b}, and that $f$ is Lipschitz continuity with a Lipschitz constant $L$ yields 
\begin{align}
    f(X_{k+1}) - f(X_*) \leq& f(\dot{X}_k)-f(X_*) -\sigma \alpha_k\|\widehat{g}_k\|_{X_k}^2 +\gamma_k+L\delta_k\nonumber\\
     \leq& f(\dot{X}_k)-f(X_*) -\frac{1}{2}\sigma \alpha_k\|\widehat{g}_k\|_{F}^2 +\gamma_k+L\delta_{k} \text{ (by~\eqref{eq09})}\nonumber\\
    \leq& (1-\mu\sigma\alpha^{'}) (f(\dot{X}_k)-f(X_*)) +\frac{1}{2}\sigma \Bar{\alpha} L_g\delta_k\|\widehat{g}_k\|_{\F}+\frac{1}{2}\sigma \Bar{\alpha}L_g\delta_k\|\dot{g}_k\|_{\F}\nonumber\\
    &+\gamma_k+L\delta_{k}\text{ (by~\eqref{eq11}, Assumption~\ref{PL}, and Theorem~\ref{conge rate}) }\nonumber\\
    \leq& (1-\mu\sigma\alpha^{'}) (f(X_k)-f(X_*)) +(L+\sigma \Bar{\alpha} L_gM_g+ L(1-\mu\sigma\alpha^{'}))\delta_k\nonumber\\
    &+\gamma_k, \text{ (by $\|\widehat{g}_k\|_F\leq M_g$ and $f(\dot{X}_k)\leq f(X_k)+L\delta_k$) }\label{eq30}
\end{align}
for sufficiently large $k.$
By Assumption~\ref{assumgamma}, there exists $Q^{'}>0$ such that for sufficiently large $k$, $\gamma_k \leq Q^{'}  (\delta_k+(\delta_k+\Bar{\alpha}^2\|\widehat{g}_k\|_{X_k}^2)^2)\leq Q^{'}(\delta_k+(\delta_k+\Bar{\alpha}^2\|\widehat{g}_k\|_{\F}^2)^2).$  Substituting this into~\eqref{eq30} gives
\begin{align}
    f(X_{k+1}) - f(X_*) \leq& (1-\mu\sigma\alpha^{'}) (f(X_k)-f(X_*)) +C^{'}\delta_k+Q^{'}(\delta_k+\Bar{\alpha}^2\|\widehat{g}_k\|_{\F}^2)^2\label{eq13}
\end{align}
for sufficiently large $k.$ To control the quadratic perturbation term in~\eqref{eq13},
we next establish an upper bound for
$\|\widehat g_k\|_F^2$
in terms of the objective residual $f(X_k)-f(X_*)$ and $\delta_k$. Combining $\gamma_k \leq Q^{'}(\delta_k+(\delta_k+\Bar{\alpha}^2\|\widehat{g}_k\|_{\F}^2)^2) $ with~\eqref{eq16} yields 
\begin{align}
    \sigma \alpha^{'}\|\widehat{g}_k\|_{\F}^2&\leq \sigma \alpha_k\|\widehat{g}_k\|_{\F}^2\leq 4\sigma \alpha_k\|\widehat{g}_k\|_{X_k}^2\leq 4(f(X_k)-f(X_*))+8Q^{'}\delta_k+ 8Q^{'}(\delta_k+\Bar{\alpha}^2\|\widehat{g}_k\|_{\F}^2)^2\nonumber\\
    &= 4(f(X_k)-f(X_*))+8Q^{'}\delta_k+8Q^{'}\delta_k^2 +16Q^{'}\delta_k\Bar{\alpha}^2\|\widehat{g}_k\|_{\F}^2+ 8Q^{'}\Bar{\alpha}^4\|\widehat{g}_k\|_{\F}^4\nonumber\\
    &\leq 4(f(X_k)-f(X_*))+16Q^{'}(1 +\Bar{\alpha}^2M_g^2 )\delta_k+ 8Q^{'}\Bar{\alpha}^4\|\widehat{g}_k\|_{\F}^4,\label{eq17}
\end{align}
where the last inequality follows from $\delta_k\leq 0.5\leq 1, \alpha_k\leq\Bar{\alpha},$ and $\|\widehat{g}_k\|_{\F}\leq M_g.$ By Theorem~\ref{IRGDconveroneNS}, we have $\|\widehat{g}_k\|^2_{\F}\leq \frac{\sigma\alpha^{'}}{16Q^{'}\Bar{\alpha}^4}$ for any sufficiently large $k.$ Inserting this into~\eqref{eq17} gives 
\begin{align}
    &\frac{1}{2} \sigma \alpha^{'}\|\widehat{g}_k\|_{\F}^2 \leq 4(f(X_k)-f(X_*))+16Q^{'}(1 +\Bar{\alpha}^2M_g^2 )\delta_k\nonumber\\
    \implies & (\delta_k+\Bar{\alpha}^2\|\widehat{g}_k\|_{\F}^2)^2\leq (\delta_k +\frac{8\Bar{\alpha}^2}{\sigma\alpha^{'}}(f(X_k)-f(X_*)) +\frac{32Q^{'}\Bar{\alpha}^2(1+\Bar{\alpha}^2M_g^2)}{\sigma\alpha^{'}}\delta_k )^2\nonumber\\
     \leq  & C_0 (f(X_k)-f(X_*)+\delta_k)^2\label{eq18}
\end{align}
for sufficiently large $k,$ where $C_0 = (1+\frac{32Q^{'}\Bar{\alpha}^2(1+\Bar{\alpha}^2M_g^2)}{\sigma\alpha^{'}})^2 + (\frac{8\Bar{\alpha}^2}{\sigma\alpha^{'}})^2.$ Substituting~\eqref{eq18} into~\eqref{eq13} yields 
\begin{align}
    f(X_{k+1}) - f(X_*) \leq&(1-\mu\sigma\alpha^{'}) (f(X_k)-f(X_*)) +C^{'}\delta_k+Q^{'}(\delta_k+\Bar{\alpha}^2\|\widehat{g}_k\|_{\F}^2)^2\nonumber\\
    \leq & (1-\mu\sigma\alpha^{'}) (f(X_k)-f(X_*)) +C^{'}\delta_k +Q^{'} C_0 (f(X_k)-f(X_*)+\delta_k)^2\label{eq31}
\end{align}
Since $f(X_k)-f(X_*)\to 0 $ and $ \delta_k \to 0$, we can choose $\rho \in (1-\mu\sigma\alpha^{'},1)$ and $\rho^{'} >C^{'}/\rho $ such that for any sufficiently large $k$,
\begin{align}
    (Q^{'}+\rho^{'}) C_0 (f(X_k)-f(X_*)+\delta_k)^2\leq (\rho -(1-\mu\sigma\alpha^{'}))(f(X_k)-f(X_*)) + (\rho \rho^{'}-C^{'})\delta_k.\label{eq32}
\end{align}
From~\eqref{eq31}, we obtain  
\begin{align}
    f(X_{k+1}) - f(X_*) + \rho^{'}\delta_{k+1} \leq& (1-\mu\sigma\alpha^{'}) (f(X_k)-f(X_*)) + C^{'}\delta_k\nonumber\\
    &+ Q^{'} C_0 (f(X_k)-f(X_*)+\delta_k)^2+\rho^{'}\delta_{k+1}\nonumber\\
    \leq & (1-\mu\sigma\alpha^{'}) (f(X_k)-f(X_*)) + C^{'}\delta_k +  (Q^{'}+\rho^{'}) C_0 (f(X_k)-f(X_*)+\delta_k)^2\nonumber\\
    & \text{ (by $\delta_{k+1}\leq (\delta_k +\Bar{\alpha}^2\|\widehat{g}_k\|_{\F}^2)^2$ and~\eqref{eq18})}\nonumber\\
    \leq &(1-\mu\sigma\alpha^{'}) (f(X_k)-f(X_*)) + C^{'}\delta_k\nonumber\\
    &+(\rho -(1-\mu\sigma\alpha^{'}))(f(X_k)-f(X_*)) + (\rho \rho^{'}-C^{'})\delta_k\text{ (by~\eqref{eq32})}\nonumber\\
     =& \rho (f(X_k)-f(X_*) + \rho^{'}\delta_k ).\nonumber
\end{align}
Thus, there exists $K_1$ such that for all $k\geq K_1$, $f(X_{k+1}) - f(X_*) + \rho^{'}\delta_{k+1}  \leq \rho^{k-K_1}C_{K_1}$. Consequently, 
\begin{align}
    f(X_{k+1}) - f(X_*)  &\leq \rho^{k-K_1}C_{K_1}, 
    \delta_{k+1}\leq \rho^{k-K_1}\frac{C_{K_1}}{\rho^{'}}\label{eq34}
\end{align}
i.e., $\{f(X_k)-f(X_*)\}$ and $\{\delta_k\}$ converges linearly.

To translate the objective-value convergence into iterate
convergence, we invoke the local quadratic growth property. By Assumption~\ref{PL} and~\cite[Proposition~2.2]{QR2025Fast}, we have 
\begin{align}
    f(X)-f^*_{\mathcal{S}} \geq \frac{\mu}{2} \dist (X,\mathcal{S})^2,
\end{align}
for any point $X\in\St(p,n)\cap \mathcal{D}(\mathcal{S},2r).$  
Since we have proved for sufficiently large $k$, $\dist(\dot{X}_k, \mathcal{S})\leq 2r$ in Theorem~\ref{conge rate}, we have 
\begin{align}
   \dist(X_{k+1},\mathcal{S} ) \leq&  \dist (\dot{X}_{k+1},\mathcal{S})+\delta_{k+1} \leq \sqrt{\frac{2}{\mu} (f(\dot{X}_{k+1})-f(X_*))}+\delta_{k+1} \nonumber\\
    \leq& \sqrt{\frac{2}{\mu} (f(X_{k+1})-f(X_*) + L\delta_{k+1}) }+\delta_{k+1}\nonumber \\
    \leq&\sqrt{\rho}^{k-K_1} \sqrt{\frac{2}{\mu}(1+\frac{L}{\rho^{'}}) C_{K_1} } +\rho^{k-K_1}\frac{1}{\rho^{'}}C_{K_1}\text{ (by~\eqref{eq34})}\nonumber\\
    & \leq \sqrt{\rho}^{k-K_1} (\sqrt{\frac{2}{\mu}(1+\frac{L}{\rho^{'}}) C_{K_1} }+ \frac{1}{\rho^{'}}C_{K_1}),
\end{align}
which establishes the R-linear convergence of $\{\dist(X_{k},\mathcal{S})\}$.
\end{proof}
\begin{remark} \label{remark2}
The linear convergence factor obtained in Theorem~\ref{linear}
is
\[
\rho \in (1-\mu\sigma\alpha',\,1).
\]
Hence the asymptotic convergence speed is governed by the local
P$\text{\L}$ constant $\mu$ and uniform positive lower bound $\alpha^{'}$ on the final accepted step size. When $f$ is twice continuously differentiable around a nondegenerate local minimizer $X^\star$, i.e., $\mathrm{Hess}f(X^\star)\succ0$, the local P$\text{\L}$ constant satisfies $\mu  = \Theta (\lambda_{\min}\!\left(
\mathrm{Hess}\,f(X^\star)
\right))$, where $\lambda_{\min}
(\mathrm{Hess}\,f(X^\star))$ denotes the smallest eigenvalue of $\mathrm{Hess}\,f(X^\star)$.
On the other hand, since $\{t_k\}$ converges to zero sublinear and $\{\|\widehat{g}_k\|_{X_k}\}$ converges to zero linearly, then 
\begin{align}
    \frac{t_k}{\|\widehat{g}_k\|}_{X_k}\to +\infty.\label{tk/gk}
\end{align} 
It follows from the proof of Theorem~\ref{conge rate} that $\nu = +\infty$ and $\alpha^{'}\leq \min\{\Bar{\alpha},\frac{2\lambda}{L^{'}}(1-\sigma)
\}=O
\!\left(
\frac{1}{\lambda_{\max}
(\mathrm{Hess}\,f(X^\star))}
\right)$, where $\lambda_{\max}
(\mathrm{Hess}\,f(X^\star))$ denotes the largest eigenvalue of $\mathrm{Hess}\,f(X^\star)$.
Consequently,
$
1-\mu\sigma\alpha'
=
1-O
\!\left(
\frac{\lambda_{\min}
(\mathrm{Hess}\,f(X^\star))}
{\lambda_{\max}
(\mathrm{Hess}\,f(X^\star))}
\right)
=
1-O
\!\left(
\kappa^{-1}
\right),
$
where $\kappa
=
\frac{\lambda_{\max}
(\mathrm{Hess}\,f(X^\star))}
{\lambda_{\min}
(\mathrm{Hess}\,f(X^\star))} $
denotes the condition number of the Riemannian Hessian at $X^{\star}$. 
Therefore, the parameter $\rho$ of the local linear convergence rate is related to the function itself, the infeasibility in the proposed algorithm
does not influence the local convergence. Moreover, the local convergence rate deteriorates as the Hessian
becomes increasingly ill-conditioned, which is consistent with the behaviour of the exact Riemannian gradient descent method. The proposed inexact algorithm preserves the same local
linear convergence rate as exact Riemannian gradient descent while
significantly reducing the computational cost of retraction.
\end{remark}


\begin{remark}\label{remark3}
The update rule of Landing is given as follows:
\begin{align}
    X_{k+1} = X_{k}-\alpha (\widehat{g}(X_k) + \beta X_k(X_k^TX_k-I).
\end{align}
where $\beta$ is penalty paramater and related to the gradient of the penalty function $\frac{1}{4}\|X_k^TX_k-I\|_{\F}^2$. Therefore, it require sufficiently large penalty parameters to enforce the feasibility of the iterate. Moreover,
the local convergence factor~\cite[Theorem 1]{SYB2024Local} is 
\begin{align}
   1-\frac{\alpha \rho\mu^{'}}{2}, \rho=
\min
\left\{
\frac{1}{2},
O(\frac{1}{\beta})
\right\}, \frac{1}{\mu'} = \max \left\{ \frac{1}{\mu}, \frac{B_1}{\beta^2} \right\}
\nonumber
\end{align}
where $\mu$ is  Riemannian local P$\text{\L}$ constant, $B_1$ are constants associated with the function $f.$ 
For a small $\beta$, $\mu'=O(\beta^2)$, indicating insufficient constraint enforcement and a slow convergence rate. In contrast, when $\beta$ becomes excessively large, $\rho = O(1/\beta)$, which also deteriorates the convergence factor. Therefore, the choice of $\beta$ requires a delicate balance between feasibility improvement and convergence speed. 
ExPen solves the original problem by solving the exact penalty function $h$, which is constructed as follows:
\begin{align}
    h(X) := f\left(X\left(\frac{3}{2}I_p-\frac{1}{2}X^\top X\right)\right)
+\frac{\beta}{4}\left\|X^\top X-I_p\right\|_F^2.
\end{align}
 A small penalty parameter $\beta$ may lead to the failure of convergence, while a large penalty parameter may result in a large condition number of  penalty function, thus lead to slow convergence rate~\cite{xiao2024solving}. PLAM is updated according to the following rules:
\begin{align}
   X_{k+1} =  X_k-\alpha_k
\left(
\nabla f(X_k)
-X_k\mathrm{sym}\left(\nabla f(X_k)^\top X_k\right)
+\beta X_k\left(X_k^\top X_k-I_p\right)
\right)
\end{align} and PCAL is a modification of PLAM. They rely on sufficiently large penalty parameter $\beta$ to establish global convergence guarantees and the local convergence factor is $1-\frac{\lambda_{\min}(\Hess f (X^*))}{B_2+2\beta}$~\cite[Theorem 4.2]{BinGao2019paraOrth}, where $B_2$ is constants associated with the function $f$. Increasing $\beta$ enlarges the convergence factor and thus slows down the local convergence. 

Therefore, existing infeasible methods generally suffer from a trade-off between feasibility enforcement and local convergence speed due to the dependence of their convergence behavior on penalty parameters.

Since IRGD-StieONS does not use any penalty term, tuning the coefficient of the penalty is no longer required. The parameter sequence $\{\gamma_k\}$ used in IRGD-StieONS may seem difficult to tune (see Assumption~\ref{assumgamma}), yet this is not the case in practice. Firstly, a feasible scheme is given in~\eqref{gammakk}, indicating that Assumption~\ref{assumgamma} can be satisfied. Secondly, the numerical experiment in Section~\ref{subsec:5.2} shows that the performance of IRGD-StieONS is insensitive to $\{\gamma_k\}$.
\end{remark}

\begin{remark}\label{stepsize}
Unlike many existing algorithms, IRGD-StieONS exhibits great flexibility for selecting initial step sizes.
For Landing, the analysis of its local linear convergence rate does not support dynamic step size and requires
  $\alpha\leq O(\frac{1}{\beta})$ .
In PLAM and PCAL~\cite{BinGao2019paraOrth}, dynamic step size is allowed, but ensuring global convergence and local linear convergence rate requires that the step size be sufficiently small, i.e.,  $\alpha_k\leq  O(\frac{1}{\beta} ).$ 
In contrast, IRGD-StieONS does not introduce a penalty parameter, and hence its stepsize selection is not subject to a penalty-parameter-dependent upper bound. When $k$ is sufficiently large, $\frac{t_k}{2\|\widehat{g}_k\|_{x_k}}\to \infty$(from~\eqref{tk/gk}), the restriction on the initial step size weakens. 
Note that $\Bar{\alpha}$ is not necessarily a constant. It can be a bounded sequence depending on $k$, i.e., $\{\Bar{\alpha}_k\}$ satisfying $0 < \alpha_{-} \leq \Bar{\alpha}_k \leq \alpha_+$ for all $k$, where $\alpha_{-}$ and $\alpha_{+}$ are two constants. It follows that all theoretical results still hold.
Therefore, a practical option for $\{\Bar{\alpha}_k\}$ is $\Bar{\alpha}_k = \min(\max(\alpha_{-}, \alpha^{\mathbb{BB}}_k), \alpha_+)$, where $\alpha^{\mathbb{BB}}_k$ denotes a BB step size.
    
\end{remark}
\section{An Inexact Riemannian Stochastic Gradient Descent with one Newton-Schulz Iteration}\label{Sec4}

The proposed IRGD-StieONS can be adjusted to solve optimization problems in the form of
\begin{equation}
    \begin{aligned}
        \min_{X\in\St(p.n)} f(X) =  \frac{1}{N}\sum_{i=1}^N f_i(X),
    \end{aligned}
\end{equation}
where each function $f_i$ is continuously differentiable.
At each iteration, it takes a random function $f_i$ and goes in
the direction opposite to its Riemannian gradient. The index $i$ is drawn from the discrete uniform distribution over $\{1,\dots,N \}$, then the stochastic gradient is an unbiased estimator of $\widehat{g}$, i.e., $\mathbb{E}_i[\widehat{g}^{i}(X)] = \widehat{g}(X),$ where $\widehat{g}^{i}(X) = \mathrm{skew}(\nabla f_i(X)X^T)X$ and $\widehat{g}(X) = \mathrm{skew}(\nabla f(X)X^T)X$. The resulting algorithm, called an inexact Riemannian stochastic gradient descent with one Newton-Schulz iteration (IRSGD-StieONS), is stated in Algorithm~\ref{alg:RSGDoneNS}. Note that Algorithm~\ref{alg:RSGDoneNS} is a variant of the Riemannian stochastic gradient algorithm in~\cite{bonnabel2013stochastic} on the Stiefel manifold, which additionally uses a line search for step size selection and replaces the exponential map with one Newton-Schulz iteration.
\begin{algorithm}[!ht]
    \caption{An inexact Riemannian Stochastic Gradient Descent on the Stiefel manifold with one Newton-Schulz iteration (IRSGD-StieONS) }
    \label{alg:RSGDoneNS}
\renewcommand{\algorithmicrequire}{\textbf{Input:}}
    \renewcommand{\algorithmicensure}{\textbf{Output:}}
    \begin{algorithmic}[1]
        \REQUIRE Initial point $X_0 \in \St(p,n)^{0.5}$; a sequence $\{t_k\}$ and a scalar $\Bar{\alpha}>0$ for initial step size selection; and a sequence $\{\gamma_k\}$ and two scalars $\sigma \in (0, 1)$ and $\lambda \in (0,1)$ for line search conditions.  
        \FOR {$k = 0, 1, 2, \ldots$}
        \STATE 
        $\widehat{g}^{i_k}_k =  \mathrm{skew}(\nabla f_{i_k}(X_k)X_k^T)X_k,i_k\sim\mathcal{U}[1,N]$ and $ \widehat{d}^{i_k}_k= - \widehat{g}^{i_k}_k$;
        \STATE $\alpha_k = \Bar{\alpha}_k =  \min(\Bar{\alpha},\frac{t_k}{2\|\widehat{g}^{i_k}_k\|_{X_k}})$;
         \WHILE{ 
         \label{alg:step4}$f_{i_k}(\Psi(X_k+\alpha_k\widehat{d}^{i_k}_k)) > f_{i_k}(X_k) + \sigma \alpha_k \left\langle\hat{g}^{i_k}_k,\hat{d}^{i_k}_k\right\rangle_{X_k} + \gamma_k$} 
          \STATE   $\alpha_k = \lambda \alpha_k$; 
         \ENDWHILE 
        \STATE Set $X_{k+1}=\Psi(X_k+\alpha_k\widehat{d}^{i_k}_k);$ 
        \ENDFOR
    \end{algorithmic}
\end{algorithm}


Next, we establish its global convergence, which relies on Assumption~\ref{RSGD:fL}.
\begin{Assumption}\label{RSGD:fL}
     The gradient of the function $f_i$ is Lipschitz continuous with Lipschitz constant $\Bar{L}_i$ on $\St(p,n)^{0.5}$, where $i = 1,2,\dots N.$  
\end{Assumption}
Some constants are defined as follows.
(i) It follows from Assumption~\ref{RSGD:fL} that the extended Riemannian gradient $\widehat{g}^{i}(X) = \mathrm{skew}(\nabla f_i(X)X^T)X$ is Lipschitz continuous in $\St(p,n)^{0.5},$ with Lipschitz constant denoted by $L^i_g.$ (ii) Since the function
$f_i$ is continuously differentiable, it is Lipschitz continuous in the compact set $\St(p,n)$, with Lipschitz constant denoted by $L_i$. (iii) 
Since $\widehat{g}^i$ is a continuous function, there exists an upper bound constant of $\|\widehat{g}^i\|_{\F}$ on the compact set $\St(p, n)^{0.5}$, i.e., 
\begin{align}
    M_g^{i}: = \sup_{X\in \St(p,n)^{0.5}}\|\widehat{g}^i(X)\|_{\F}\nonumber
\end{align}
is finite. (iv) By Assumption~\ref{RSGD:fL} and~\cite[Lemma 2.7]{NB2018Global}, there exists constant $L^{'i}$ such that $f_i$ is $L^{'i}$-retraction-smooth with respect to the retraction $R$ in $\St(p,n),$ i.e.,
\begin{equation*}
    f_i(R_{X}(V))\leq f_i(X) + \left\langle\grad f(X), V \right\rangle_{X}+\frac{L^{'}_i}{2}\|V\|_X^2, \forall X\in\St(p,n), V\in \T_X\St(p,n).
\end{equation*}
We further require that $L \geq  \max_{i = 1,2,...,N}\{L_i\}, \Bar{L}\geq \max_{i = 1,2,...,N}\{\Bar{L}_i\}$, $L_g \geq \max_{i = 1,2,...,N}\{L_g^i\}$, $M_g \geq \max_{i = 1,2,...,N}\{M_g^i\}$ and $L^{'} \geq \max_{i = 1,2,...,N}\{L^{'}_i\}$. The sequences $\{\theta_k\}$, $\{\delta_k\}$, and $\{\zeta_k\}$ are defined as those in Section~\ref{Sec3}, and Lemma~\ref{B117} remains valid for Algorithm~\ref{alg:RSGDoneNS}.
Denote $\dot{X}_k = \P_{\St(p,n)}(X_{k-1}+\alpha_{k-1}\widehat{d}_k^{i_k}), \dot{g}_k^{i_k} =\mathrm{skew}(\nabla f_{i_k}(\dot{X}_k)\dot{X}^T_k)\dot{X}_k$, and $\dot{d}_k^{i_k} = -\dot{g}_k^{i_k}.$ Based on Assumptions~\ref{assum1},~\ref{assumgamma}, and~\ref{RSGD:fL}, following the proofs of Lemma~\ref{lemmaxx} analogously yields
\begin{align}
    \|X_k-\dot{X}_k\|_{\F}&\leq \delta_k;\label{i3}\\
    \|\widehat{g}_k^{i_k}- \dot{g}_k^{i_k}\|_{\F}&\leq L_g\delta_k;\label{i2}\\
    \|\Psi (X_k+\alpha_k\hat{d}_k^{i_k} )-R_{\dot{X}_k}(\alpha_k\dot{d}^{i_k}_{k})\|_{\F}&\leq \zeta_{k+1}.\label{i1}
 \end{align} 
\begin{theorem}\label{salpha1}
    Suppose Assumptions~\ref{assumgradL}, \ref{assum1}, \ref{assumgamma}, and~\ref{RSGD:fL} hold. 
    Let $\alpha_k$ be the accepted step size in the $k$th
iteration of Algorithm~\ref{alg:RSGDoneNS}. Then we have
\begin{equation*}
    \alpha_k\geq \min\{\Bar{\alpha}_k,\frac{2\lambda}{L^{'}} (1-\sigma)\}  = \min\{\Bar{\alpha},\frac{t_k}{2\|\widehat{g}^{i_k}_k\|_{X_k}}, \frac{2\lambda}{L^{'}} (1-\sigma)\} \quad \forall k, \text{ and }\sum_{k = 0}^{\infty}\alpha_k = \infty.
\end{equation*}
The proof of Theorem~\ref{salpha1} follows the same idea as Theorem~\ref{alpha1} by simply replacing $f$ with $f_{i_k}$.
\end{theorem}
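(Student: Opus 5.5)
The proof will mirror that of Theorem~\ref{alpha1}, with $f$ replaced by the sampled component $f_{i_k}$ and the uniform constants $L\ge L_i$, $L_g\ge L_g^i$, $M_g\ge M_g^i$, $L'\ge L'_i$ used throughout. Since these constants dominate the component constants, Assumption~\ref{assumgamma} gives the same bounds as in the deterministic case.

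First, finite termination of the backtracking loop follows the argument of Theorem~\ref{oneNSstep}. Define $\Phi_{i_k}(\alpha)=f_{i_k}(\Psi(X_k+\alpha\widehat d_k^{i_k}))-f_{i_k}(X_k)-\sigma\alpha\langle \widehat g_k^{i_k},\widehat d_k^{i_k}\rangle_{X_k}-\gamma_k$. Using \eqref{o1} (which depends only on $X_k$), \eqref{i3}, and the $L$-Lipschitz continuity of $f_{i_k}$, we get $\Phi_{i_k}(0)\le 3L\delta_k-\gamma_k<0$. Continuity of $\Phi_{i_k}$ then gives a positive interval of acceptable step sizes, so $\alpha_k=\bar\alpha_k\lambda^{l_k}$ for a finite $l_k$.

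Next I split into cases. In Case 1, $\widehat g_k^{i_k}=0$, and the computation in \eqref{r8} with $f_{i_k}$ shows that $\bar\alpha_k$ is accepted. In Case 2, if $l_k=0$ there is nothing to prove. If $l_k\ge1$, the trial step $\alpha_k/\lambda$ was rejected, which gives the analogue of \eqref{istep} for $f_{i_k}$. I will then bound $f_{i_k}(\Psi(X_k+\frac{\alpha_k}{\lambda}\widehat d_k^{i_k}))$ from above by passing to $R_{\dot X_k}(\frac{\alpha_k}{\lambda}\dot d_k^{i_k})$ through the analogue of \eqref{zeta}, and then apply the $L'$-retraction-smoothness of $f_{i_k}$ at $\dot X_k$ with its exact Riemannian gradient $\dot g_k^{i_k}$. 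The analogue of \eqref{zeta} comes from the proof of Lemma~\ref{lemmaxx} with $\widehat g$ replaced by $\widehat g^{i_k}$, since that proof uses only the Lipschitz and bound constants, which are uniform. The gradient mismatch is handled by \eqref{i2} and the inequalities \eqref{w1}--\eqref{w3}, and the metric mismatch by \eqref{eq36}. Comparing the upper and lower bounds, every error term is absorbed by $\gamma_k\ge a_1\delta_k+\frac{L}{\lambda^4}\zeta_{k+1}$. This leaves $-\sigma\frac{\alpha_k}{\lambda}\|\widehat g_k^{i_k}\|^2_{X_k}\le -\frac{\alpha_k}{\lambda}\|\widehat g_k^{i_k}\|_{X_k}^2+\frac{\alpha_k^2L'}{2\lambda^2}\|\widehat g_k^{i_k}\|_{X_k}^2$, and hence $\alpha_k\ge 2\lambda(1-\sigma)/L'$.

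The main point to check is that the deterministic proof never used the stochastic direction in a way that needs unbiasedness. The line search compares $f_{i_k}$ with $f_{i_k}$ only, so each realization is handled pathwise and the bound holds surely, for every $k$.

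For $\sum\alpha_k=\infty$ I argue by contradiction as in Theorem~\ref{alpha1}. If the sum were finite, then $\alpha_k\to0$, so eventually $\alpha_k=\frac{t_k}{2\|\widehat g^{i_k}_k\|_{X_k}}\ge t_k/(2M_g)$, using $\|\widehat g^{i_k}_k\|_{X_k}\le M_g$. This contradicts $\sum t_k=\infty$ from Assumption~\ref{assum1}. This part is routine.
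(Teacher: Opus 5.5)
Your proposal is correct and is the paper's own argument: you rerun the proof of Theorem~\ref{alpha1} pathwise for the realized index $i_k$, with $f$ replaced by $f_{i_k}$ and the uniform constants $L, L_g, M_g, L'$, covering the finite-termination step, both cases, and the contradiction argument for $\sum_k\alpha_k=\infty$. The one point to state explicitly is that your analogue of \eqref{zeta} involves $\|\widehat g^{i_k}_k\|_{\F}$, so $\zeta_{k+1}$, and hence the lower bound on $\gamma_k$ in Assumption~\ref{assumgamma}, must be read with $\|\widehat g^{i_k}_k\|_{\F}$ in place of $\|\widehat g_k\|_{\F}$ for the absorption step to hold; the paper's one-line ``replace $f$ by $f_{i_k}$'' relies on the same reading without saying so.
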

Theorem~\ref{RSGD:conver} shows that our method has the same convergence rate as
Riemannian stochastic gradient descent with decreasing step size.
\begin{theorem}\label{RSGD:conver}
    Suppose that Assumptions~\ref{assum1},~\ref{assumgamma}, and~\ref{RSGD:fL} hold. Let $\{X_k\}$ be a sequence generated by Algorithm~\ref{alg:RSGDoneNS}. Assume further that the sequence $t_k =\hat{t}(1+k)^{-\frac{1}{2}}$, where $\hat{t}\leq \min\{2M_g\Bar{\alpha}, \frac{4\lambda(1-\sigma)M_g}{L^{'}},\sqrt{0.2}\}$. Then, for any $K\geq 1$, the following bound holds:
    $$ \min_{k<K}\mathbb{E}[\|\widehat{g}(X_k)\|_{\F}^2]\leq \frac{2M_g}{\hat{t}\sqrt{K}} (f(X_0)-f^*+ \frac{L^{'}\hat{t}}{2}\log(K+1) + (S^{'}+\frac{\Bar{\alpha}}{2}M_gL_g ) \sum_{k=0}^K\theta_k),$$
    where $S^{'}$ is a constant such that $\frac{L^{'}\alpha_k^2}{2}( L_g\delta_k\|\dot{g}^{i_k}_{k}\|_{\F} + L_g\delta_k\|\widehat{g}_k^{i_k}\|_{\F} )+L(\delta_k+ \zeta_{k+1})\leq S^{'} \theta_k$.
    The expectation here is taken with respect to all the random realizations of the random
variables $i_k$, $k\leq K$.
\end{theorem}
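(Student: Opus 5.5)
The plan is to mimic the standard RSGD analysis of \cite{bonnabel2013stochastic}, using the retraction-smoothness inequality at the projected point $\dot X_k$ rather than the line-search inequality. The line-search condition in Step~\ref{alg:step4} involves only $f_{i_k}$, so it gives no descent on $f$. Instead, the line search will be used only to bound $\alpha_k$ from below and above. By Theorem~\ref{salpha1} and the choice $\hat t\le\min\{2M_g\bar\alpha,4\lambda(1-\sigma)M_g/L'\}$, we have $\frac{t_k}{2\|\widehat g_k^{i_k}\|_{X_k}}\ge \frac{t_k}{2M_g}$, and both remaining terms in the minimum dominate $t_k/(2M_g)$ since $t_k\le\hat t$. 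Hence
\[
\frac{t_k}{2M_g}\le\alpha_k\le \frac{t_k}{2\|\widehat g_k^{i_k}\|_{X_k}} ,
\]
so $\alpha_k\|\widehat g_k^{i_k}\|_{X_k}\le t_k/2$.

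\textbf{Step 1 (one-step estimate on $f$).} First compare $X_{k+1}$ with $R_{\dot X_k}(\alpha_k\dot d_k^{i_k})$ via \eqref{i1}, and $X_k$ with $\dot X_k$ via \eqref{i3}, using the Lipschitz constant $L$ of $f$. Then apply \eqref{Lrgd} to $f$ at $\dot X_k$ with $V=-\alpha_k\dot g_k^{i_k}$. This gives
\[
f(X_{k+1})\le f(X_k)-\alpha_k\langle \grad f(\dot X_k),\dot g_k^{i_k}\rangle_{\dot X_k}+\tfrac{L'}{2}\alpha_k^2\|\dot g_k^{i_k}\|^2+L(\delta_k+\zeta_{k+1}).
\]
Next, replace $\dot g_k^{i_k}$ by $\widehat g_k^{i_k}$ and $\grad f(\dot X_k)$ by $\widehat g(X_k)$ using \eqref{i2} and the metric perturbation bound \eqref{eq36}. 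The resulting cross terms are $O(\bar\alpha M_gL_g\delta_k)$ and $\frac{L'\alpha_k^2}{2}L_g\delta_k(\cdots)$, and all the $\delta_k,\zeta_{k+1}$ remainders are absorbed into $(S'+\frac{\bar\alpha}{2}M_gL_g)\theta_k$ by Lemma~\ref{B117}. The quadratic term is bounded by $\frac{L'}{2}\alpha_k^2\|\widehat g_k^{i_k}\|^2\le \frac{L'}{2}\cdot 4\cdot\frac{t_k^2}{4}$, using \eqref{eq09} and the upper bound on $\alpha_k$ above. This produces the $\frac{L'}{2}\hat t^2/(k+1)$ terms, whose sum gives the $\log(K+1)$ factor; constant factors may need adjusting.

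\textbf{Step 2 (expectation: the main obstacle).} The difficulty is that $\alpha_k$ depends on $i_k$, so $\mathbb E[\alpha_k\langle\widehat g_k,\widehat g_k^{i_k}\rangle]\neq \mathbb E[\alpha_k]\|\widehat g_k\|^2$. I would first try to decouple with the deterministic surrogate $\beta_k:=t_k/(2M_g)\le\alpha_k$, writing
\[
\alpha_k\langle \widehat g_k,\widehat g_k^{i_k}\rangle=\beta_k\langle\widehat g_k,\widehat g_k^{i_k}\rangle+(\alpha_k-\beta_k)\langle\widehat g_k,\widehat g_k^{i_k}\rangle .
\]
Conditioning on $X_k$ and using $\mathbb E_{i_k}[\widehat g_k^{i_k}]=\widehat g_k$, the first term yields $\beta_k\|\widehat g_k\|^2$. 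The second term has no sign in general. The gap $\alpha_k-\beta_k$ is at most $O(t_k)$, so the error is of order $t_k M_g^2$, which does not sum nicely. Consequently I expect the intended argument to need either an extra condition making the step essentially deterministic (with $\hat t$ small, $\alpha_k=t_k/(2\|\widehat g_k^{i_k}\|)$ or $\alpha_k$ equals a fixed value), or a bound $\alpha_k\le\bar\alpha_k\le t_k/(2\|\widehat g^{i_k}_k\|)$ combined with a Cauchy--Schwarz argument. I would try the decoupling first, and fall back on treating $\alpha_k$ as $\Theta(t_k)$ with the variance absorbed into the $L'$ term.

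\textbf{Step 3 (telescoping).} Take total expectations, sum over $k=0,\dots,K-1$, and use $f(X_K)\ge f^*$ on $\St(p,n)^{0.5}$. Bound the left-hand side below by $\frac{\hat t}{2M_g}\min_{k<K}\mathbb E\|\widehat g(X_k)\|^2\sum_{k<K}(1+k)^{-1/2}\ge \frac{\hat t\sqrt K}{2M_g}\min_{k<K}\mathbb E\|\widehat g(X_k)\|^2$. Dividing through then gives the stated bound.
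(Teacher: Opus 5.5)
Your Steps 1 and 3 essentially coincide with the paper's proof. Both use retraction-smoothness \eqref{Lrgd} at $\dot X_k$ together with \eqref{i3}, \eqref{i1}, \eqref{i2}, then $\alpha_k\|\widehat g^{i_k}_k\|_{X_k}\le t_k/2$ for the $\frac{L'}{2}t_k^2$ term, and $\alpha_k\ge t_k/(2M_g)$ for the telescoping. The gap is Step 2, which you leave open, and neither fallback closes it. The cross term $\mathbb E_{i_k}[(\alpha_k-\beta_k)\langle\widehat g_k,\widehat g^{i_k}_k\rangle]$ is first order in $t_k$ and has no sign, so it cannot be absorbed into the $O(t_k^2)$ term. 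Forcing $\alpha_k=t_k/(2\|\widehat g^{i_k}_k\|_{X_k})$ fixes the step \emph{length}, not the step size, and the resulting per-sample normalized direction is exactly the biased case. The paper does not resolve this either. In \eqref{s5} it passes to $\mathbb E_{i_k}[f(X_{k+1})]\le f(X_k)-\alpha_k\|\widehat g(\dot X_k)\|^2_{\dot X_k}+\cdots$, i.e.\ it pulls $\alpha_k$ out of $\mathbb E_{i_k}$. Yet the preceding display used $\alpha_k\le t_k/(2\|\widehat g^{i_k}_k\|_{X_k})$, which shows that $\alpha_k$ depends on $i_k$, and the line search on $f_{i_k}$ adds further dependence.

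In fact the gap cannot be closed under the stated hypotheses. Since $t_k\to0$, eventually $\bar\alpha_k=t_k/(2\|\widehat g^{i_k}_k\|_{X_k})$ and it is accepted at the first trial wherever $f_{i_k}$ is not nearly stationary. The method then becomes normalized SGD, whose conditional mean direction is $\frac1N\sum_i\widehat g^i/\|\widehat g^i\|_{X_k}$, not $\widehat g$.

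Concretely, take $n=2$, $p=1$, $X=r(\cos\phi,\sin\phi)^T$, $u^\perp=(-\sin\phi,\cos\phi)^T$, and $N=3$ with $f_i(X)=c_i^TX$, where $c_1=10e_2$, $c_2=-e_1$, $c_3=-e_2$. Let $X_0=e_1$, $\bar\alpha\le1$ and $\hat t$ small. Then $\widehat g^i(X)=\frac{r^2}{2}(c_i^Tu^\perp)u^\perp$ and $\Psi$ is radial. For $|\phi|\le\pi/3$, the $f_1$-step changes $\phi$ by $-\arctan(t_k/(2r))$ and the $f_3$-step by $+\arctan(t_k/(2r))$, so they cancel exactly in conditional expectation. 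The $f_2$-step moves $\phi$ toward $0$. Hence $\phi_k$ stays concentrated near $0$, where on the circle $f=\frac13(9\sin\phi-\cos\phi)$ and $\|\widehat g\|_{\F}\approx\frac32$. So $\min_{k<K}\mathbb E\|\widehat g(X_k)\|_{\F}^2$ stays bounded away from $0$, while the claimed bound is $O(\log K/\sqrt K)$.

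A correct version needs $\alpha_k$ to be independent of $i_k$ given the past, e.g.\ $\alpha_k=\min\{\bar\alpha,t_k/(2M_g)\}$ with no line search on $f_{i_k}$. Then your Step 2 decoupling is exact and Steps 1--3 go through up to constants; note the paper also drops the factor $\frac14$ of \eqref{s6} when summing.
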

\begin{remark} (The existence of $S^{'}$)
By Assumption~\ref{assum1} and  Lemma~\ref{B117}, we have $\theta_{k+1} = (\theta_k+t_k^2)^2 \leq \frac{1}{2}\theta_k + 0.4\lambda\theta_k +t_k^4 .$ Since $\theta_k  = \Theta (t_k^4)$, there exist a constant $S^{''}$ such that $t_k^4\leq S^{''}\theta_k$. It follows that $\theta_{k+1} \leq (\frac{1}{2} + 0.4 \lambda + S^{''}) \theta_k. $
    Combine this with $\alpha_k\leq \Bar{\alpha
    }, \|\dot{g}^{i_k}_{k}\|_{\F}\leq M_g ,  \|\widehat{g}_k^{i_k}\|_{\F} \leq M_g, \delta_k\leq \theta_k$ and $\zeta_{k+1} \leq \theta_{k+1} + L_p(1+\Bar{\alpha}L_g)\delta_k$, we have 
    $\frac{L^{'}\alpha_k^2}{2}( L_g\delta_k\|\dot{g}^{i_k}_{k}\|_{\F} + L_g\delta_k\|\widehat{g}_k^{i_k}\|_{\F} )+L(\delta_k+ \zeta_{k+1}) \leq (L^{'}\Bar{\alpha}^2L_gM_g+ L + LL_p(1+\Bar{\alpha}L_g))\theta_k+L \theta_{k+1} \leq (L^{'}\Bar{\alpha}^2L_gM_g+ L + LL_p(1+\Bar{\alpha}L_g) + L (\frac{1}{2} + 0.4 \lambda + S^{''}) )\theta_k$. Therefore, choosing $S^{'} = L^{'}\Bar{\alpha}^2L_gM_g+ L + LL_p(1+\Bar{\alpha}L_g) + L (\frac{1}{2} + 0.4 \lambda + S^{''})$ yields $\frac{L^{'}\alpha_k^2}{2}( L_g\delta_k\|\dot{g}^{i_k}_{k}\|_{\F} + L_g\delta_k\|\widehat{g}_k^{i_k}\|_{\F} )+L(\delta_k+ \zeta_{k+1})\leq S^{'} \theta_k$.
\end{remark}
\begin{proof}
     By~\eqref{Lrgd}, we have
    \begin{equation*}\label{s4}
        \begin{aligned}
            f(R_{\dot{X}_k}(\alpha_k\dot{d}^{i_k}_{k}))&\leq 
            f(\dot{X}_k) +\alpha_k\left\langle \widehat{g}(\dot{X}_k),\dot{d}_k^{i_k}\right\rangle_{\dot{X}_k} + \frac{\alpha_k^2}{2}L^{'}\|\dot{d}^{i_k}_{k}\|_{\dot{X}_k}^2.
        \end{aligned}
    \end{equation*}
     Combining this with (\ref{i3}),~\eqref{i1},~\eqref{w3} and using $\dot{d}^{i_k}_{k} = -\dot{g}^{i_k}_{k}$ yields
    \begin{equation*}
        \begin{aligned}
            f(X_{k+1})&\leq 
            f(X_k) -\alpha_k\left\langle \widehat{g}(\dot{X}_k),\dot{g}^{i_k}_{k}\right\rangle_{\dot{X}_k} + \frac{\alpha_k^2}{2}L^{'}(\|\hat{g}^{i_k}_{k}\|_{\F}^2 + L_g\delta_k\|\dot{g}^{i_k}_{k}\|_{\F} +L_g\delta_k\|\widehat{g}_k^{i_k}\|_{\F} )+L(\delta_k+ \zeta_{k+1})\\
            &\leq 
            f(X_k) -\alpha_k\left\langle \widehat{g}(\dot{X}_k),\dot{g}^{i_k}_{k}\right\rangle_{\dot{X}_k} + \frac{L^{'}t_k^2}{2}+C^{'}\theta_k,
        \end{aligned}
    \end{equation*}
    where the second inequality follows from $\alpha_k\leq \frac{t_k}{2\|\widehat{g}^{i_k}_{k}\|_{X_k}}$ and the existence of a constant $S^{'}$ such that $\frac{L^{'}\alpha_k^2}{2}( L_g\delta_k\|\dot{g}^{i_k}_{k}\|_{\F} + L_g\delta_k\|\widehat{g}_k^{i_k}\|_{\F} )+L(\delta_k+ \zeta_{k+1})\leq S^{'} \theta_k$. 
    Taking expectations with respect to the random variable $i_k$ yields  
        \begin{align}
            \mathbb{E}_{i_k}[f(X_{k+1})]&\leq 
            f(X_k) -\alpha_k\|\widehat{g}(\dot{X}_k)\|^2_{\dot{X}_k}+\frac{L^{'}t_k^2}{2} + S^{'}\theta_k\nonumber\\
            &\leq 
            f(X_k) -\frac{\alpha_k}{4}\|\widehat{g}(\dot{X}_k)\|^2_{\F}+\frac{L^{'}t_k^2}{2} + S^{'}\theta_k,\label{s5}
        \end{align}
       where we use the stochastic gradient is an unbiased estimator of $\widehat{g}$.
      
      Since $\|\widehat{g}(\dot{X}_k) - \widehat{g}(X_k)\|_{\F}\leq L_g\delta_k$, we have $\|\widehat{g}(\dot{X}_k)\|_{\F}\geq \|\widehat{g}(X_k)\|_{\F}-L_g\delta_k$. Hence, $\|\widehat{g}(\dot{X}_k)\|_{\F}^2\geq \|\widehat{g}(X_k)\|_{\F}^2 -Lg\delta_k(\|\widehat{g}(X_k)\|_{\F}+\|\widehat{g}(\dot{X}_k)\|_{\F}) \geq \|\widehat{g}(X_k)\|_{\F}^2 - 2M_gL_g\theta_k$, 
      where the second inequality follows from $\delta_k\leq \theta_k$ and $\|\widehat{g}(X_k)\|_{\F}+\|\widehat{g}(\dot{X}_k)\|_{\F}\leq 2M_g.$
      Substituting this into~(\ref{s5}) we obtain
      \begin{equation}\label{s6}
          \mathbb{E}_{i_k}[f(X_{k+1})]\leq 
            f(X_k) -\frac{\alpha_k}{4} \|\widehat{g}(X_k)\|_{\F}^2 +\frac{L^{'}t_k^2}{2} + S^{'}\theta_k + \frac{\Bar{\alpha}}{2}M_gL_g \theta_k .
      \end{equation}
       Taking expectations with respect to the past, and summing inequality~(\ref{s6}) from $k=0$ to $K$ gives
      \begin{equation*}\label{s7}
          \sum_{k=0}^K\alpha_k\mathbb{E}[\|\widehat{g}(X_k)\|_{\F}^2 ] \leq f(X_0)-f^*+ \frac{L^{'}}{2}\sum_{k=0}^Kt_k^2 + (S^{'}+\frac{\Bar{\alpha}}{2}M_gL_g ) \sum_{k=0}^K\theta_k.
      \end{equation*}
      From $t_k = \hat{t}(1+k)^{-\frac{1}{2}},$ we have $\alpha_k\geq \min\{\Bar{\alpha},\frac{t_k}{2\|\widehat{g}^{i_k}_{k}\|_{{X}_k}}, \frac{2\lambda}{L^{'}} (1-\sigma)\} \geq \min\{\Bar{\alpha},\frac{t_k}{2M_g}, \frac{2\lambda}{L^{'}} (1-\sigma)\}=\frac{t_k}{2M_g} $.  
      Using the bound 
      $$\min_{k\leq K}\mathbb{E}[\|\widehat{g}(X_k)\|_{\F}^2]\leq \sum_{k=0}^K\alpha_k\mathbb{E}[\|\widehat{g}(X_k)\|_{\F}^2 ]\times (\sum_{k=0}^K\alpha_k)^{-1},$$ 
      together with $\sum_{k = 0}^K(1+k)^{-1}\leq \log(K+1)$ and $\sum_{k=0}^K t_k\geq \hat{t}\sqrt{K}$, we obtain
      \begin{equation*}
          \begin{aligned}
              \min_{k\leq K}\mathbb{E}[\|\widehat{g}(X_k)\|_{\F}^2]&\leq (f(X_0)-f^*+ \frac{L^{'}\hat{t}}{2}\log(K+1) + (S^{'}+\frac{\Bar{\alpha}}{2}M_gL_g ) \sum_{k=0}^K\theta_k) \times (\sum_{k=0}^K\alpha_k)^{-1}\\
              &\leq\frac{2M_g}{\hat{t}\sqrt{K}} (f(X_0)-f^*+ \frac{L^{'}\hat{t}}{2}\log(K+1) + (S^{'}+\frac{\Bar{\alpha}}{2}M_gL_g ) \sum_{k=0}^K\theta_k),
          \end{aligned}
      \end{equation*}
where $\alpha_k \geq \frac{t_k}{2 M_g}$ is used.
\end{proof}

 \section{Numerical experiments}\label{Sec5}
 This section evaluate the numerical performance of algorithms IRGD-StieONS and IRSGD-StieONS. We first introduce the parameter setting and testing environment in Section~\ref{subsec:5.1}. 
Subsequently, we report the numerical results in Sections~\ref{subsec:5.2} and~\ref{subsec:5.3}, respectively. 
The discretized Kohn–Sham total energy minimization and principal component analysis experiments are conducted on a workstation equipped with an Intel Core Ultra 7 155H CPU and 32 GB RAM.
The CNN experiments are conducted on a GPU server equipped with an NVIDIA RTX 4090 GPU and 24 GB GPU memory.
All competing algorithms within each experiment are implemented and evaluated on the same platform.

\subsection{Parameter settings and testing environment}\label{subsec:5.1}
We compare the proposed algorithms with the landing algorithm (Landing)~\cite{ablin2024infeasible, SYB2024Local}\footnote{Landing: \href{https://github.com/simonvary/landing-stiefel}{ https://github.com/simonvary/landing-stiefel}.}, the Riemannian gradient descent algorithm (RGD)~\cite{boumal2023intromanifolds} from the package Geoopt\footnote{Geoopt: \href{https://github.com/geoopt/geoopt}{ https://github.com/geoopt/geoopt}.}, the ExPen algorithm with gradient descent (ExPen-GD)~\cite{xiao2024solving}, and proximal linearized augmented
Lagrangian algorithm (PLAM)~\cite{BinGao2019paraOrth}.
The stochastic version IRSGD-StieONS is compared to the stochastic versions of Landing (Landing-SG), RGD (RSGD), ExPen-GD (ExPen-SGD), and PLAM (PLAM-SG).
We test these algorithms on Discretized Kohn–Sham total energy minimization (DKS)~\cite{liu2014convergence},  principal component analysis (PCA), and convolutional neural networks (CNN). Unless specified otherwise, the proposed algorithms IRGD-StieONS and IRSGD-StieONS set sequence $t_k = \min\{ \frac{2\|\widehat{g}_k\|_{X_k}\sqrt{\sqrt{0.5}-\|X_k^TX_k-I_p\|_{\F}}}{\|\widehat{g}_k\|_{\F}},  \frac{10 }{ (k+1)^{0.3}}\}$ (see Remark~\ref{tk} for details), 
$\gamma_k = 10\beta (\|X_k^TX_k-I\|_{\F}+\|\widehat{g}_k\|_{X_k}^4),$ and
$\theta_{k+1}= 0.5$ if $k< \hat{K}$ and $\theta_{k+1} = (\theta_k+t_k^2)^2$ otherwise,
where $\hat{K}$ denotes the smallest integer such that $\frac{4 }{ (\hat{K}+1)^{0.3}}\leq \lambda\sqrt{0.2}$ holds.
The parameters $\beta$, $\sigma$, and $\rho$ are set to be 1, 0.5, and 0.5, respectively. Note that the sequence $\{\gamma_k\}$ is uniquely defined by $\beta$. 
The RGD and RSGD is initialized at a randomly generated point in $\St(p,n)$, while all other algorithms are initialized at randomly generated points in $\St(p,n)^{0.5}$. All the experiments are implemented in Python using PyTorch.
The DKS and PCA experiments are conducted on a CPU. The corresponding algorithms stop if the absolute error $|f(X_{k})-f(X_{*})|\leq 10^{-12}$ and the feasibility $\|X_k^TX_k-I\|_{\F}\leq 10^{-14}$, where $X_*$ is a highly accurate approximation of the solution and will be specified later. The CNN experiments are conducted using a fixed number of epochs and run on a single GPU.\\
\textbf{DKS:} 
Consider a simplified discretized Kohn--Sham total energy
minimization problem~\cite{liu2014convergence}:
\begin{equation}
\begin{aligned}
    \min_{X\in\mathbb{R}^{n\times p}}\quad
    &\frac{1}{2}\operatorname{tr}(X^\top L X)
    +\frac{\alpha}{4}\rho(X)^\top L^\dagger \rho(X),\\
    \text{s.t.}\quad
    &X^\top X=I_p,
\end{aligned}
\end{equation}
where $L\in\mathbb{S}^n : = \{X\in \mathbb{R}^{n\times n} \mid X^T = X\}$,
$\rho(X)=\operatorname{diag}(XX^\top)$, and $L^{\dagger} : =(L^TL)^{-1}L^T $ refers to the pseudoinverse of $L$. We set $\alpha=1$ in all
experiments. The matrix $L$ is set as $
L=\frac{1}{2}\left(\widetilde{L}+\widetilde{L}^{\top}\right)$, where each entry of $\widetilde{L}$ is drawn from the standard normal distribution.
\\
\textbf{Parameter setting for DKS:} We set $n=1000$ and $p=50$. 
The penalty parameter $\beta$ is set to $10$, $10$, and $10\|L\|_2$ for Landing, ExPen-GD, and PLAM, respectively, and the safe-region parameter for Landing is set to $\frac{1}{2}$, since they work efficiently and reliably in our experiments (See Section~\ref{subsec:5.2}). The initial step sizes at the first iterations are set to $0.1$, $0.01$, $0.02$, and $0.02$ for Landing, RGD, ExPen-GD, and PLAM, respectively. For the first iteration of IRGD-StieONS, we set  $\Bar{\alpha} = 0.05$.
For a fair comparison, all tested algorithms employ the same
alternating Barzilai--Borwein (BB) stepsize strategy~\cite{dai2005projected}, in which the
BB1 and BB2 stepsizes are used alternately. Throughout this section, the stepsize selection strategy for IRGD-StieONS and IRSGD-StieONS refers to the selection of $\Bar{\alpha}$.  Note that although the alternating BB stepsizes seem to work well in practice for all algorithms, they are not guaranteed to satisfy the stepsize assumption in Landing and PLAM.

Since the global minimizers of the considered nonconvex problems are 
generally unavailable, we use the Riemannian trust-region (RTR) solver~\cite{townsend2016pymanopt}  
provided by the Pymanopt toolbox to obtain a high-accuracy reference 
solution. The RTR iterations 
are terminated when
$
    \|\operatorname{grad} f(X_k)\|_{\mathrm{F}}\leq 10^{-12}.
$
The obtained solution, denoted by $X_{*}$, is used as a numerical 
approximation of the minimizer.\\
\textbf{PCA:}
Consider the following PCA problem~\cite{hotelling1933analysis}:
\begin{align} \min_{X\in\St (p,n)}
-\frac{1}{2}\operatorname{tr}(X^{T}AX),\nonumber
\end{align}
where $A=\frac{1}{m} Y^{T}Y$ denotes the sample covariance matrix of the data matrix
$Y\in\mathbb{R}^{m\times n}$. Specifically, we generate
$Y=U\Sigma V^{T}$, where $U$ and $V$ are random orthonormal matrices
obtained by QR factorizations. The first $p$ diagonal entries of $\Sigma$
are chosen to decrease linearly from $10$ to $0.5$, i.e.,
$
\Sigma_{ii}
=
10-\frac{9.5(i-1)}{p-1},  i=1,\ldots,p,$
while the remaining singular values are set to zero. The generation of problem instances is inspired by~\cite{gao2018new}. 
The optimal solution $X_*$ is given by the $p$ leading
eigenvectors of $A$. For stochastic optimization, the objective admits the finite-sum
representation $
f(X)
=
-\frac{1}{2m}\sum_{i=1}^{m}\|y_i^{T}X\|_2^2,$
with the mini-batch stochastic gradient $
\nabla f_{\mathcal{B}}(X)
=
-\frac{1}{|\mathcal{B}|}
Y_{\mathcal{B}}^{T}Y_{\mathcal{B}}X,$
where $\mathcal{B}$ is a randomly sampled mini-batch. This formulation
provides a standard finite-sum PCA problem for testing stochastic
Riemannian optimization methods.\\
\textbf{Parameter setting for PCA:} We test both deterministic and stochastic cases.  For the deterministic setting, we set
$n=1000$, $m=500$, and $p=100$. For the stochastic setting, we consider
a larger-scale problem with $n=10000$, $m=1000$, and $p=100$, and use a
mini-batch of size $500$ to compute the stochastic gradient. 

We set $\beta=1$ for all penalty-based methods, except for ExPen-GD, for which $\beta=30$, and use a safe-region parameter of $\frac{1}{2}$ for Landing and Landing-SG. The initial step sizes at the first iterations for Landing, Landing-SG, RGD, RSGD, ExPen-GD, ExPen-SGD, PLAM, and PLAM-SG are set to $0.01$, $0.1$, $1$,$0.01$, $0.1$, $0.01$, $10$, and $50$, respectively. For the first iteration of IRGD-StieONS and IRSGD-StieONS, we set  $\Bar{\alpha} = 0.1$.
The alternating BB stepsize strategy is used
for the deterministic setting, and a prescribed stepsize strategy is used for the stochastic
setting. For the prescribed step size setting, we mimic the step size decay strategy used in~\cite{ablin2024infeasible}. Specifically, the step size is multiplied by a decay factor of $0.9$ at the $30$th and $60$th epochs, respectively. \\
\textbf{CNN:} We test standard model VGG16~\cite{simonyan2014very} for image classification on dataset CIFAR10~\cite{krizhevsky2009learning}. In VGG16, for a convolutional layer with kernel 
$\widehat{K} \in \mathbb{R}^{c_{out}\times c_{in}\times h\times w}$,
we first reshape $\widehat{K}$ into a matrix $K$ of size $p\times n$,
where $p=c_{out}$, $n=c_{in}\times h\times w$.
In the case when the reshaping results in a wide instead of a tall matrix, we impose the orthogonality on its transposition.
Then, we restrict the matrix $K$ on the Stiefel manifold using Landing-SG, RSGD, ExPen-SGD, PLAM-SG, and IRSGDStieONS, while other parameters are optimized with SGD. The CIFAR-10 dataset consists of 60000 color images from 10 classes and officially divided into 50000 training images and 10000 testing images, with 5000 training samples and 1000 testing samples per class.\\
\textbf{Parameter setting for CNN:} For all of the tested algorithms, the total number of epochs in training is 50, the batch size is 100, and the initial learning rates are set as $0.01$ for PLAM-SG and $0.11$ for others. During training, we reduce the learning rates by a factor of $0.1$ at 10, 20 epochs. Parameter $\Bar{\alpha}$ in IRSGD-StieONS is chosen as learning rate. For the Landing-SG, ExPen-SGD and PCAL-SG, the penalty parameter to be 1. The safe
region parameter for Landing-SG to be $\frac{1}{2}.$ 


 \begin{remark}\label{tk}
     In practical computations, we adopt the following adaptive step-size rule to reconcile numerical efficiency with the theoretical convergence guarantees:
\begin{equation}
t_k = \min\left\{ 
\frac{2\,\|\widehat{g}_k\|_{X_k}\,\sqrt{\sqrt{0.5}-\|X_k^\top X_k - I_p\|_{\mathrm F}}}{\|\widehat{g}_k\|_{\mathrm F}},\; 
\frac{10}{(k+1)^{0.3}}
\right\},
\label{eq:stepsize_rule}
\end{equation}
The first term is designed to control the magnitude of the update according to the current distance of $X_k$ from the Stiefel manifold, i.e., 
\begin{align}
    \|X_{k+1}^TX_{k+1}-I\|_{\F}& \leq \|(X_k+\alpha \widehat{d}_k )^T(X_k+\alpha_k \widehat{d}_k)-I\|^2_{\F} \leq (\|X_k^TX_k-I\|_{\F}+\alpha_k^2\|\widehat{g}_k\|_{\F}^2)^2\nonumber\\
    &\leq(\|X_k^TX_k-I\|_{\F}+\frac{t_k^2\|\widehat{g}_k\|_{\F}^2}{4\|\widehat{g}_k\|_{X_k}})^2\leq  0.5.\nonumber
\end{align}
The second part of the sequence satisfies Assumption~\ref{assum1} after a finite number of steps. Therefore, this strategy is equivalent to: the first few steps are used as the initialization phase, and after a finite number of steps, it conforms to the assumption of the theoretical analysis. This choice provides a practical mechanism for reconciling the numerical implementation with the theoretical conditions on ${t_k}$.
 \end{remark}
 
 \subsection{Discretized Kohn–Sham total
energy minimization and principal component analysis}\label{subsec:5.2}

\textbf{Efficiency and effectiveness:} Table~\ref{tb:pca} reports an average result of 10 random runs. We can see that IRGD-StieONS and IRSGD-StieONS are the most efficient algorithms in the sense that they find similar accurate solutions using the least computational time for both DKS and PCA. Such performance is further verified by Figure~\ref{fig:DKS-PCA}, where the result of one typical random run is shown. It can be seen from Figure~\ref{fig:DKS-PCA} that the proposed algorithms IRGD-StieONS and IRSGD-StieONS outperform the existing infeasible algorithms in terms of both absolute error and feasibility. Though the feasible method RGD maintains persistently low feasibility, the absolute error does not decrease efficiently due to difficulties in parallelization within the retraction. \\
\textbf{Parameters sensitivity:} We demonstrate the sensitivity of IRGD-StieONS to the parameter $\{\gamma_k\}$ and Landing, ExPen, PLAM to the penalty parameter $\beta$.
It can be observed from Figures~\ref{fig:DKS-beta} and~\ref{fig:PCA-beta} that the performance of IRGD-StieONS is insensitive to the choice of the parameter $\gamma_k$. 
In contrast, the other methods are more sensitive to the parameter $\beta$, where inappropriate selections may degrade convergence performance or even prevent the algorithms from converging. Specifically, for Landing, both smaller or larger values of $\beta$ slow down the decay. For ExPen, divergence occurs for both test problems when $\beta$ is set to 0.01 and 0.1. Moreover, when $\beta = 1$, ExPen-GD also diverges in DKS. For PLAM, divergence occurs on the DKS problem for $\beta$ equal to 0.01, 0.1, and 1.
\begin{table}[htbp]
\centering
\small
\caption{An average of 10 random runs with 10 random seeds. For deterministic algorithms, one epoch corresponds to a single iteration, whereas for stochastic algorithms, one epoch corresponds to one complete pass through the dataset. Computational times are in seconds. }\label{tb:pca}
\setlength{\tabcolsep}{2pt}
\begin{tabular}{cccccc}
\hline
\textbf{Problem} & \textbf{Method} & \textbf{Absolute error} & \textbf{Feasibility} & \textbf{Epoch} & \textbf{Time} \\
\hline
\multirow{4}{*}{\makecell{DKS \\
$(n, p) = (1000, 50)$}} &  Landing & 6.27e-13 & 3.12e-15 & 384 & 3.00 \\
& RGD & 9.09e-13 & 2.31e-15 & 520 & 5.16 \\
& ExPen & 5.14e-13 & 1.57e-15 & 524 & 4.07 \\
& PLAM & 5.68e-13 & 6.74e-15 & 663 & 11.35 \\
& IRGD-StieONS & \textbf{4.96e-13} & \textbf{1.25e-15} & \textbf{366} & \textbf{2.90} \\
\hline
\multirow{4}{*}{\makecell{PCA\\$(n,m,p)=(1000,500,100)$ }} &  Landing & \textbf{5.68e-14} & 7.84e-15 & 412 & 1.73 \\
& RGD & 9.66e-13 & 4.06e-15 & 510 & 2.59 \\
& ExPen & \textbf{5.68e-14} & 5.25e-15 & 1122 & 4.57 \\
& PLAM & \textbf{5.68e-14} & 9.51e-15 & 696 & 3.68 \\
& IRGD-StieONS & 9.95e-14 & \textbf{2.05e-15} & \textbf{272} & \textbf{1.13} \\
\hline
\multirow{4}{*}{\makecell{PCA\\$(n,m,p) = (10000,1000,100)$\\ batch size = 500}} 
& Landing-SG & \textbf{5.27e-13} & 9.03e-15 & 324 & 37.80 \\
& RSGD & 9.63e-13 & 3.33e-15 & 1339 & 123.36 \\
& ExPen-SGD & 9.66e-13 & 1.83e-15 & 1481 & 155.38 \\
& PLAM-SG & 9.66e-13 & 3.52e-15 & 598 & 67.87 \\
& IRSGD-StieONS & 5.66e-13 & \textbf{1.80e-15} & \textbf{241} & \textbf{32.84} \\

\hline
\end{tabular}
\end{table}
\begin{figure}[htbp]  
    \centering
\includegraphics[width=0.98\textwidth]{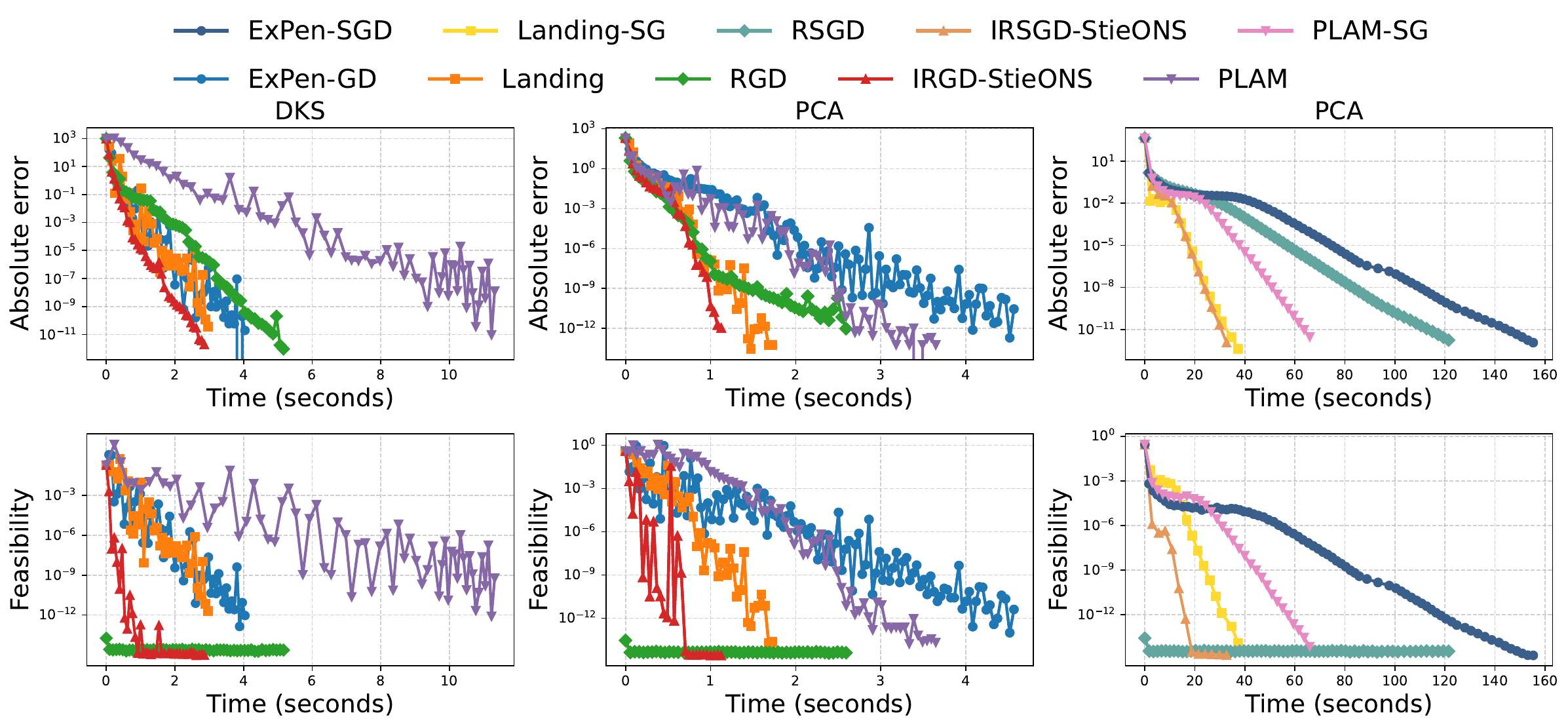}
    \caption{ The first column reports the experimental results in DKS. The second and third columns report the experimental results in PCA with $(n, m, p) = (1000, 500, 100)$ and $(n, m, p, \mathrm{batch\; size}) = (10000, 1000, 1000, 500)$, respectively. The curves are plotted by sampling points at fixed iteration‑step intervals for better visualization.
    }
    \label{fig:DKS-PCA}
\end{figure}
\begin{figure}[htbp]  
    \centering
\includegraphics[width=0.97\textwidth]{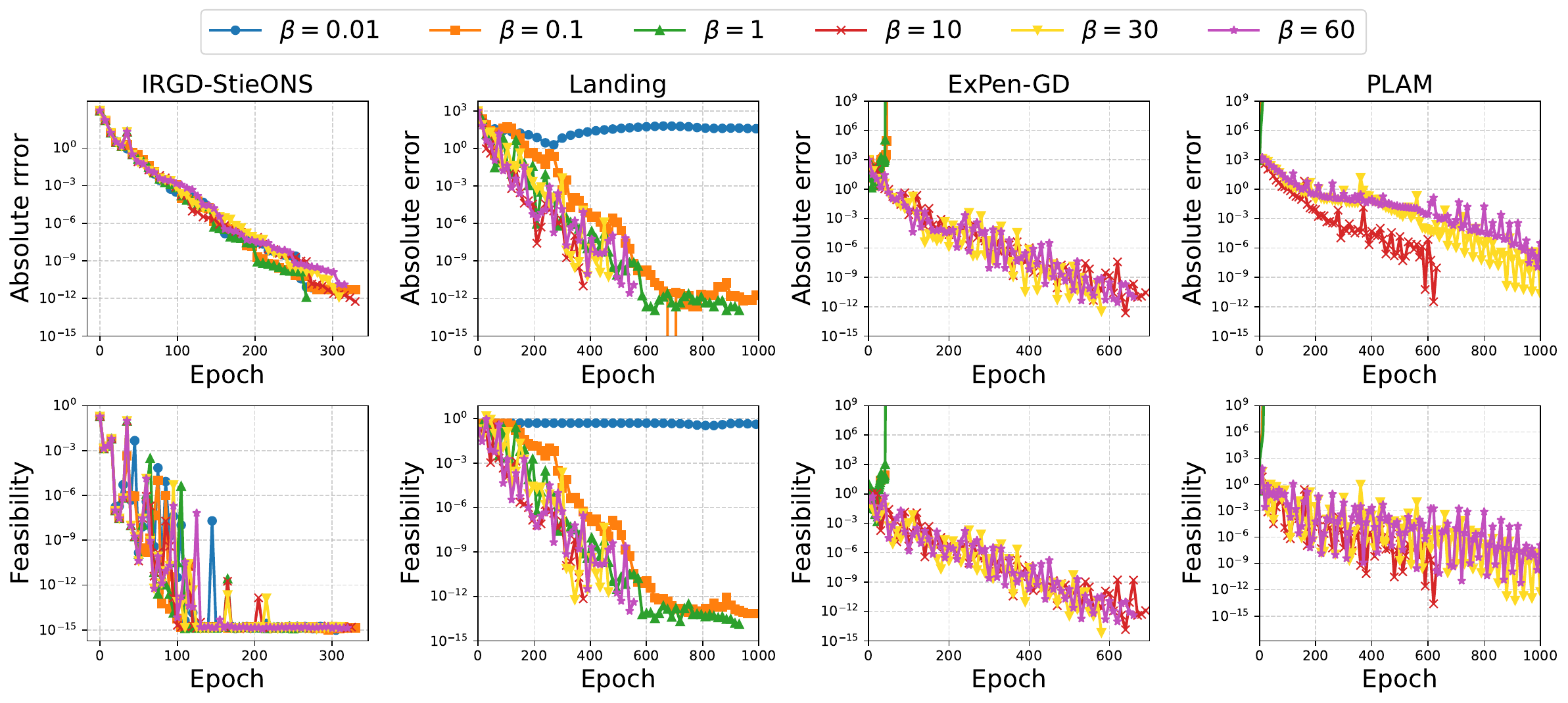}
\caption{Sensitivity to the parameter $\beta$ in DKS. The curves are plotted by sampling points at fixed iteration‑step intervals for better visualization.}
\label{fig:DKS-beta}
\end{figure}

\begin{figure}[htbp]  
    \centering
\includegraphics[width=0.97\textwidth]{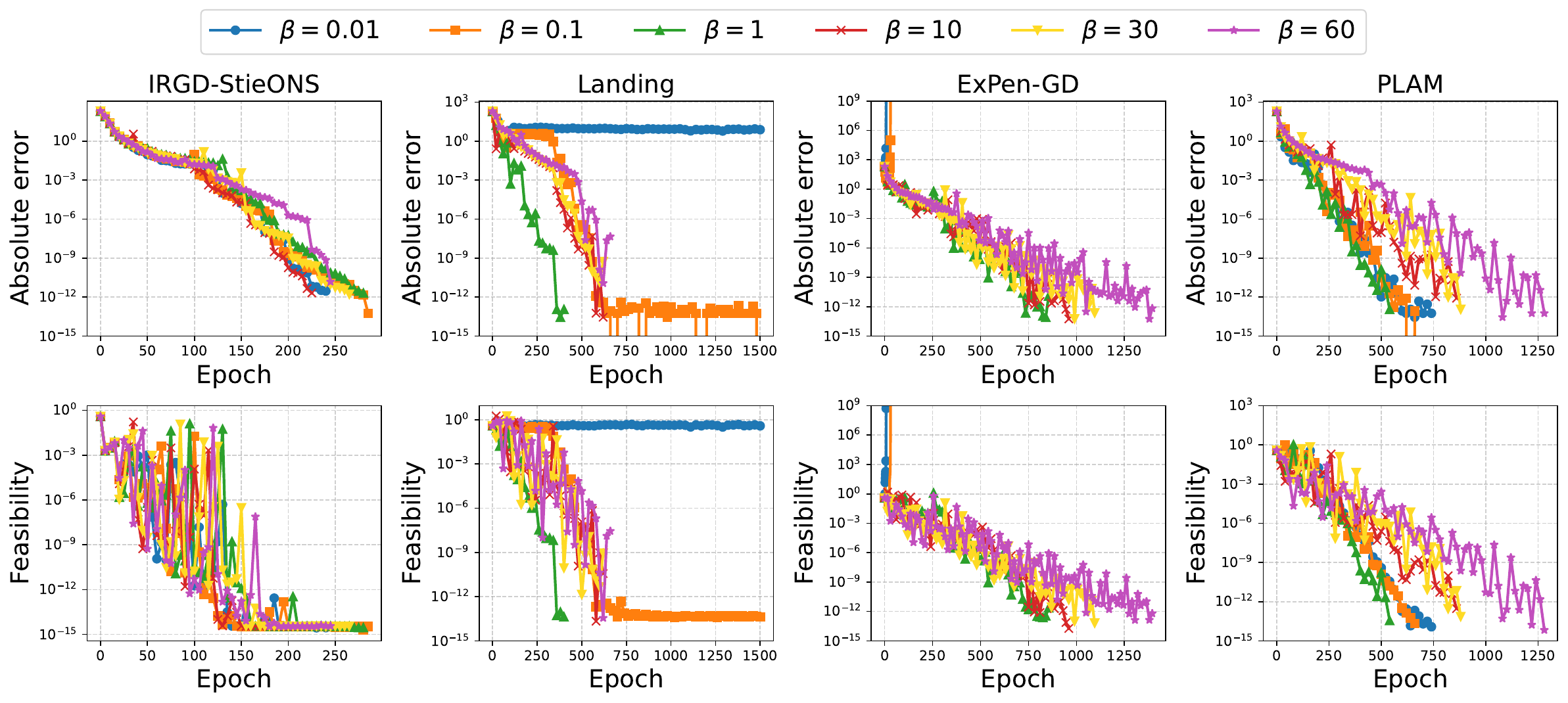}
\caption{Sensitivity to the parameter $\beta$ in PCA. The curves are plotted by sampling points at fixed iteration‑step intervals for better visualization.}
\label{fig:PCA-beta}
\end{figure}

\subsection{CNN with Orthogonality Constraints}\label{subsec:5.3}
To evaluate the robustness of the proposed method against input perturbations, Gaussian white noise is added to the training data. Specifically, the noise standard deviations are selected as std$ = 0.05, 0.1, 0.15$, where std = 0.05 is a slight noise, std = 0.1 is a moderate noise, and std = 0.15 is a strong noise. Figure~\ref{fig:vgg-std0} shows the feasibility and the test accuracy against the CPU time when the standard deviation of the noise is from 0 to 0.15.
For std=0, IRGD-StieONS and Landing-SG achieve comparable test accuracy and convergence speed. Nevertheless, IRGD-StieONS maintains a smaller constraint violation of the convolutional kernels, indicating better preservation of the Stiefel manifold structure during training. This improved feasibility contributes to more stable feature representations and enhanced robustness under perturbed inputs. The superiority of IRGD-StieONS is more evident under moderate noise level, whereas the performance differences become less significant under strong noise level due to the substantial loss of discriminative information.

In addition, Landing-SG, Expen-SGD, and PLAM-SG are sensitive to the choice of penalty parameter. Figure~\ref{fig:vgg-gammas} illustrates the results after tuning the penalty parameter $\beta$ with a noise std = 0.1. After parameter tuning, its performance is improved; however, the proposed method still achieves better performance.

 \begin{figure}[htbp]  
    \centering
\includegraphics[width=0.97\textwidth]{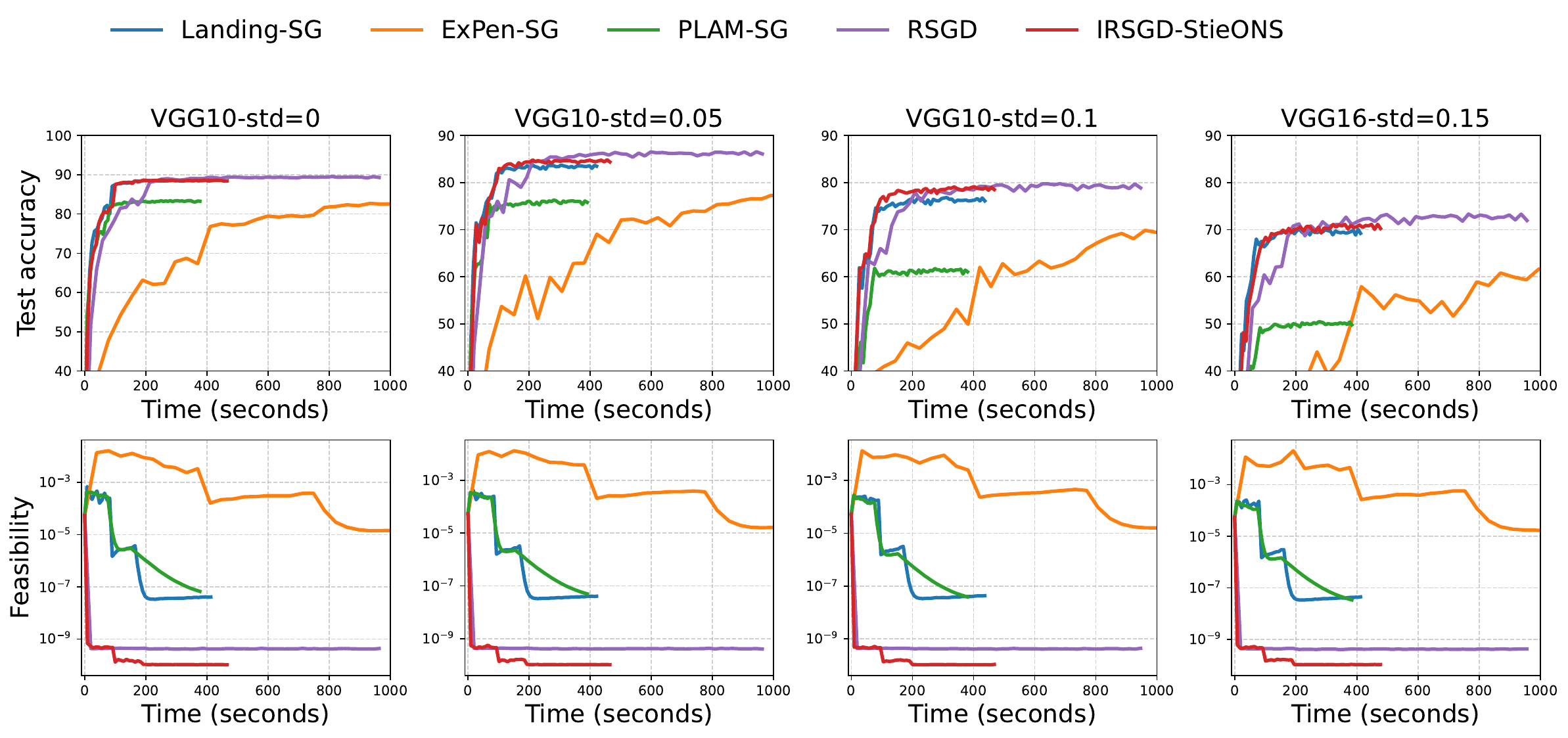}
    \caption{ VGG16 experiments.}
    \label{fig:vgg-std0}
\end{figure}

 \begin{figure}[htbp]  
    \centering
\includegraphics[width=0.97\textwidth]{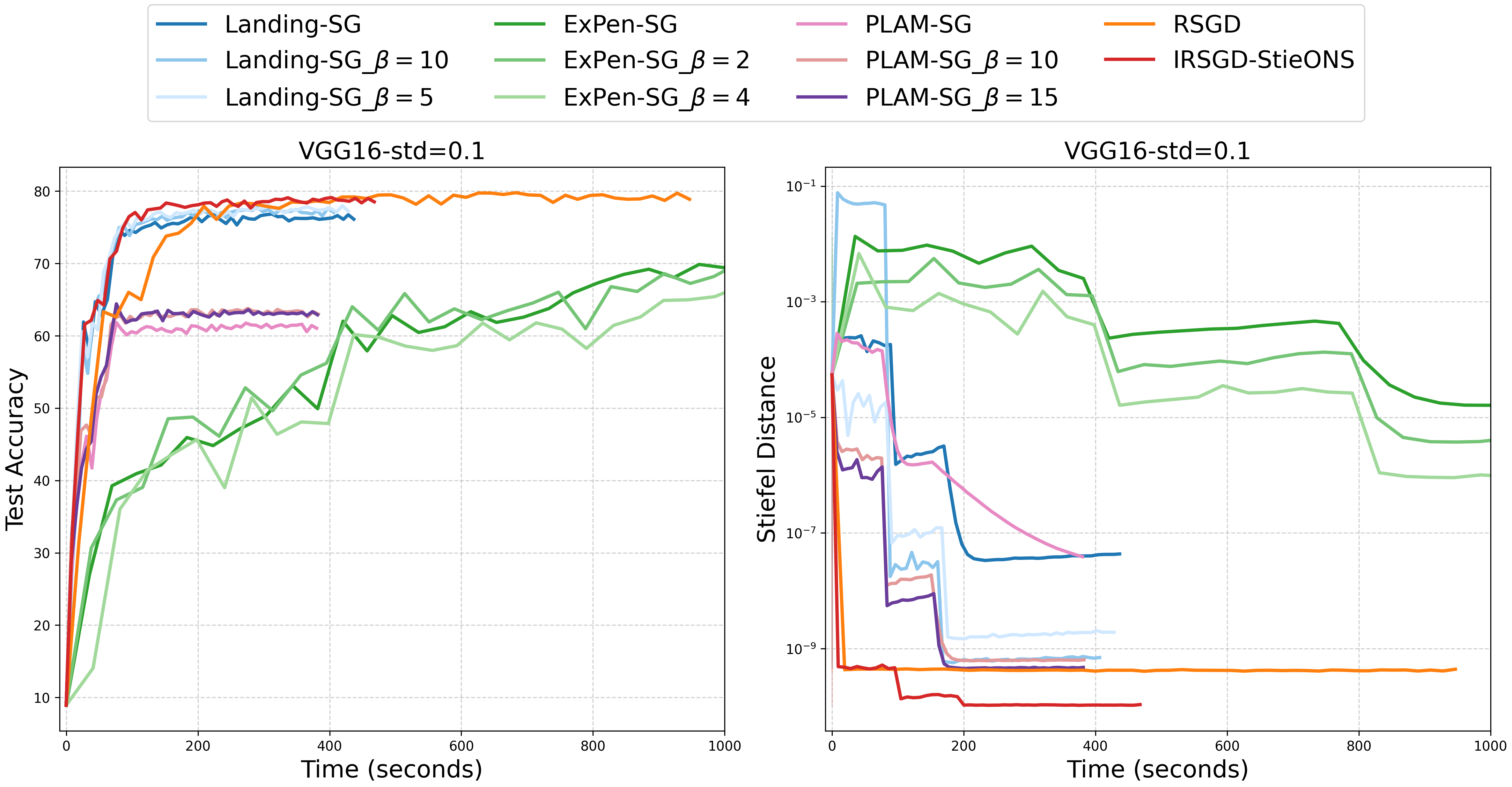}
    \caption{ VGG16 experiments with different parameter $\beta.$}
    \label{fig:vgg-gammas}
\end{figure}
 \section{Conclusion and future work}\label{Sec6}
In this paper, we proposed an inexact Riemannian gradient descent algorithm on Stiefel manifold with One Newton-Schulz iteration (IRGD-StieONS) for optimization problems on the Stiefel manifold. Unlike existing approaches that rely on retraction or penalty parameter to handle the manifold constraint, the proposed method employs a one step Newton-Schulz iteration to gradually restore feasibility, avoiding the introduction of additional penalty parameters.
Furthermore, we extended IRGD-StieONS to stochastic optimization settings and developed a stochastic variant for large-scale finite-sum problems. The convergence properties of both deterministic and stochastic algorithms were established under appropriate assumptions.

Extensive numerical experiments on DKS, PCA and orthogonally constrained convolutional neural networks demonstrate the effectiveness and robustness of the proposed methods. The results show that IRGD-StieONS and its stochastic variant achieve competitive performance while maintaining better feasibility of the Stiefel constraint and reducing sensitivity to parameter selection.

Future work will focus on exploring more efficient variants for large-scale problems, such as incorporating variance reduction techniques and extending the proposed framework to other structured optimization problems.
\section{Appendix}\label{Appendix}
The following properties in Newton-Schulz iteration are from the existing literature, will be used in the proofs of Lemma~\ref{B117} and Lemma~\ref{lemmaxx}. We restate it here without proof for completeness.

    Let $Z\in\mathbb{R}^{n\times p}$ and $\|I-Z^TZ\|_{\F}<1$. It follows from~\cite{n1990polar} that the Newton–Schulz iteration converges and satisfies 
    \begin{align}
        \|I-\Psi(Z)^T\Psi(Z)\|_{\F}<\|I-Z^TZ\|_{\F}^2.\label{ZTZ}
    \end{align}
    Moreover, by~\cite[Lemma~2.4]{CXJ2025TightError},
\begin{equation}\label{pp}
    \|A-\P_{\St(p,n)}(A)\|_{\F}\leq \|I-A^TA\|_{\F},\forall A\in\mathbb{R}^{n\times p}.
\end{equation}
Applying \eqref{pp} to $A = \Psi(Z)$ gives
\begin{equation}\label{p1}
\|\Psi(Z) - \Psi^{(\infty)}(Z)\|_{\F} \leq \|I - \Psi(Z)^T \Psi(Z)\|_{\F}.
\end{equation}

\subsection{Proof of Lemma~\ref{B117}}\label{appendix1}
\begin{proof}
By definition $ \theta_0=0.5$. 
Suppose at $k$, it holds that $\theta_k\leq0.5$. Then $\theta_{k+1} = (\theta_k+t_k^2)^2\leq (0.5+0.2)^2\leq 0.5. $ By the principle of mathematical induction, $\theta_k\leq 0.5$ for all $k.$
Next, we show that \( \theta_k \to 0 \). 
Let \( \liminf_{k \to \infty} \theta_k = m \).  Taking a subsequence \( \theta_{k_j} \to m \) and using
$
\theta_{k_j+1} = (\theta_{k_j} + t_{k_j}^2)^2,
$
together with \( t_k \to 0 \), we obtain \( \theta_{k_j+1} \to m^2 \). Hence \( m \le m^2 \), which implies \( m = 0 \) or \( m \ge 1 \). Since \( \theta_k \le 0.5 \), it follows that \( m = 0 \).
Similarly, letting \( \limsup_{k \to \infty} \theta_k = M \), one can show that \( M \ge \sqrt{M} \), which yields \( M = 0 \). Therefore, \( \theta_k \to 0 \).

Define $r_k := \theta_k / t_k^4$, then
$
r_k = \frac{t_{k-1}^4}{t_k^4} \big(r_{k-1} t_{k-1}^2 + 1\big)^2.
$
We next show that $\{r_k\}$ is bounded. Otherwise, 
we can
 define a subsequence $\{r_{k_j}\}$ by $k_1=1$ and $k_{j}=\min\{i>k_{j-1} :  r_i > r_{k_{j-1}}\}$ for $j>1$. 
It follows that $r_{k_j} \geq r_i$ for all $i < k_j$ and $r_{k_j}\to\infty$.
From the recursion, we have
$
r_{k_j} =
\frac{t_{k_j-1}^4}{t_{k_j}^4}
\big(r_{k_j-1} t_{k_j-1}^2 + 1\big)^2.
$
Dividing both sides by $r_{k_j}$ yields
\begin{equation} \label{eq02}
1 =
\frac{t_{k_j-1}^4}{t_{k_j}^4}
\left(
\frac{r_{k_j-1} t_{k_j-1}^2 + 1}{\sqrt{r_{k_j}}}
\right)^2.
\end{equation}
Since $r_{k_j} \ge r_{k_j-1}$, we have $
\frac{r_{k_j-1} t_{k_j-1}^2}{\sqrt{r_{k_j}}}
\le \sqrt{r_{k_j-1}}\, t_{k_j-1}^2
= \sqrt{\theta_{k_j-1}} \to 0.
$
Moreover, $1/\sqrt{r_{k_j}} \to 0$. Hence
$
\frac{r_{k_j-1} t_{k_j-1}^2 + 1}{\sqrt{r_{k_j}}} \to 0.
$
Using $t_{k_j-1}/t_{k_j} \to 1$ for~\eqref{eq02} yields $0 = 1$, which is a contradiction. Therefore, $\{r_k\}$ is bounded. Letting $k \to \infty$ in $r_k = \frac{t_{k-1}^4}{t_k^4} \big(r_{k-1} t_{k-1}^2 + 1\big)^2$ yields $\lim_{k\to \infty} r_k = 1$, i.e., $\theta_k = O(t_k^4)$. It follows from $\sum t_k^4 < \infty$ that $\sum_{k = 0}^{\infty}\theta_k<\infty.$
     
    The first inequality of~\eqref{r7} follows from~\eqref{pp}. The second inequality of~\eqref{r7} can be proved by induction. First, it holds for $k = 0$ by definition. Suppose the second inequality of~\eqref{r7} holds for $k$. Then the eigenvalues of $I_n-\frac{1}{2}X^T_kX_k$ are between $\frac{1}{4}$ and $\frac{3}{4}$. It follows that $\|Y\|_{X_k}\geq \frac{1}{2}\|Y\|_{\F}$ for any $Y\in\mathbb{R}^{n\times p}.$ Therefore,
    \begin{align}
         \|I-X_{k+1}^T X_{k+1}\|_{\F} 
            &\leq \|I-(X_{k}+\alpha_{k} \widehat{d}_{k})^{T}(X_{k}+\alpha_{k}\widehat{d}_{k})\|_{\F}^2 \leq (\theta_{k} +t_{k}^2\frac{\|\widehat{g}_k\|_{\F}^2}{4\|\widehat{g}_k\|_{X_k}^2})^2\nonumber\\
            &\leq (\theta_k + t_k^2)^2 =  \theta_{k+1},
    \end{align}
where the first inequality follows from~\eqref{ZTZ} and $\|I-(X_{k}+\alpha_{k}\widehat{d}_{k})^{T}(X_{k}+\alpha_{k}\widehat{d}_{k})\|_{\F}  \leq \theta_{k} +t_{k}^2\frac{\|\widehat{g}_k\|_{\F}^2}{4\|\widehat{g}_k\|_{X_k}^2} \leq 0.7$ and the second inequality follows from the triangle inequality and $2\alpha_k\|\widehat{g}_k\|_{X_k}\leq t_k$. Therefore, the second inequality of~\eqref{r7} holds for $k+1$, which completes the proof.

\end{proof}
\subsection{Proof of Lemma~\ref{lemmaxx}} \label{proofoflemmaxx}
\begin{proof}
By~\eqref{p1}, 
     \begin{align}\label{b1}
         \|X_k-\dot{X}_k\|_{\F}\leq \|I-X_k^TX_k\|_{\F}= \delta_k, \text{ for all }k. 
     \end{align}
     It follows from (\ref{L1}) and~\eqref{b1} that 
\begin{align}
    \|\hat{g}_k-\dot{g}_k\|_{\F}&\leq L_g \|X_k-\dot{X}_k\|_{\F}\leq  L_g\delta_k, \text{ for all }k. \label{gg}
\end{align}
By the triangle inequality, we have
    \begin{equation*}
    \begin{aligned}
        \|\Psi({X_k}+\frac{\alpha_k}{\lambda}\hat{d}_k)-R_{\dot{X}_k}(\frac{\alpha_k}{\lambda}\dot{d}_k)\|_{\F}&\leq \underbrace{\|\Psi({X_k}+\frac{\alpha_k}{\lambda}\hat{d}_k)-\P_{\St(p,n)}(X_k+\frac{\alpha_k}{\lambda}\widehat{d}_k)\|_{\F}}_{(I)}
        \\
        &+\underbrace{\|\P_{\St(p,n)}(X_k+\frac{\alpha_k}{\lambda}\widehat{d}_k)- R_{\dot{X}_k}(\frac{\alpha_k}{\lambda}\dot{d}_k)\|_{\F}}_{(II)}.\\
    \end{aligned}
    \end{equation*}
   For term (I), since $\|I-({X_k}+\frac{\alpha_k}{\lambda}\hat{d}_k)^T({X_k}+\frac{\alpha_k}{\lambda}\hat{d}_k)\|_{\F}\leq \delta_k+\frac{t_k^2}{\lambda^2} \leq 0.7,$ we have 
\begin{align}
    \|\Psi({X_k}+\frac{\alpha_k}{\lambda}\hat{d}_k)-\P_{\St(p,n)}(X_k+\frac{\alpha_k}{\lambda}\widehat{d}_k)\|_{\F}&\leq \|I-(\Psi({X_k}+\frac{\alpha_k}{\lambda}\hat{d}_k))^T\Psi({X_k}+\frac{\alpha_k}{\lambda}\hat{d}_k)\|_{\F}\text{ (by~\eqref{p1})}\nonumber\\
    &\leq \|I-({X_k}+\frac{\alpha_k}{\lambda}\hat{d}_k)^T({X_k}+\frac{\alpha_k}{\lambda}\hat{d}_k)\|_{\F}^2\text{ (by~\eqref{ZTZ})} \nonumber\\
    &\leq (\delta_k+\frac{\alpha_k^2\|\widehat{g}_k\|_{\F}^2}{\lambda^2} )^2\leq\frac{1}{\lambda^4}\min\{\theta_{k+1},(\delta_k+\Bar{\alpha}^2\|\widehat{g}_k\|_{\F}^2 )^2\},\label{r2}
    \end{align}
   where 
   the third inequality follows from the triangle inequality, $\delta_k = \|I-X_k^TX_k\|_{\F}$, the fourth inequality follows from $\lambda < 1$, $(\delta_k+\alpha_k^2\|\widehat{g}_k\|_{\F}^2 )^2\leq (\theta_k + t_k^2)^2 = \theta_{k+1} $, and $(\delta_k+\alpha_k^2\|\widehat{g}_k\|_{\F}^2 )^2\leq (\delta_k+\Bar{\alpha}^2\|\widehat{g}_k\|_{\F}^2 )^2.$

   For term (II), since $R$ is a polar‑based retraction, we have $R_{\dot{X}_k}(\frac{\alpha_k}{\lambda}\dot{d}_k) = \P_{\St(p,n)}({\dot{X}_k}+\frac{\alpha_k}{\lambda}\dot{d}_k) $. Consequently,  $\|\P_{\St(p,n)}(X_k+\frac{\alpha_k}{\lambda}\widehat{d}_k)- R_{\dot{X}_k}(\frac{\alpha_k}{\lambda}\dot{d}_k)\|_{\F}=\|\P_{\St(p,n)}(X_k+\frac{\alpha_k}{\lambda}\widehat{d}_k)- \P_{\St(p,n)}({\dot{X}_k}+\frac{\alpha_k}{\lambda}\dot{d}_k)\|_{\F}$. Since $X_k+\frac{\alpha_k}{\lambda}\widehat{d}_k, \dot{X}_k+\frac{\alpha_k}{\lambda}\dot{d}_k\in\Omega$, we have
        \begin{align}
         \|\P_{\St(n,p)}(X_k+\frac{\alpha_k}{\lambda}\widehat{d}_k)- \P_{\St(n,p)}({\dot{X}_k}+\frac{\alpha_k}{\lambda}\dot{d}_k)\|_{\F} 
           &\leq L_p(\|X_k-\dot{X}_k\|_{\F}+\frac{\alpha_k}{\lambda}\|\hat{d}_k-\dot{d}_k\|_{\F})\nonumber\\
           &\leq L_p\delta_k + \frac{\Bar{\alpha}L_pL_g}{\lambda}\delta_k,\label{r3}
        \end{align}
   where the second inequality follows from~\eqref{b1},~\eqref{gg} and $\alpha_k\leq \Bar{\alpha}$. Combing~\eqref{r2} and~\eqref{r3}, we can obtain
    \begin{equation*}\label{123}
        \begin{aligned}
        \|\Psi({X_k}+\frac{\alpha_k}{\lambda}\hat{d}_k)-R_{\dot{X}_k}(\frac{\alpha_k}{\lambda}\dot{d}_k)\|_{\F}&\leq \frac{1}{\lambda^4}\min\{\theta_{k+1},(\delta_k+\Bar{\alpha}^2\|\widehat{g}_k\|_{\F}^2 )^2\} + L_p\delta_k + \frac{\Bar{\alpha}L_pL_g}{\lambda}\delta_k \leq \frac{1}{\lambda^4}\zeta_{k+1},
        \end{aligned}
    \end{equation*}
     where the second inequality follows from a proper scaling. 
     
     For the conclusion $\|\Psi({X_k}+\alpha_k\hat{d}_k)-R_{\dot{X}_k}(\alpha_k\dot{d}_k)\|_{\F}\leq \zeta_{k+1}$, the proof is similar to the above procedure and is omitted here.

    \end{proof}

\printbibliography
\end{document}